\documentclass{birkjour}

\usepackage{amsthm, amssymb, amsmath, latexsym, amsfonts, mathcomp}
\usepackage{wrapfig}
\usepackage{xcolor}

\usepackage{hyperref}













\usepackage{comment}
\usepackage{xcolor}


\allowdisplaybreaks

\theoremstyle{plain}

\newtheorem{proposition}{Proposition}

\theoremstyle{remark}






\def\Z{\mathbb{Z}}
\def\R{\mathbb{R}}

\usepackage{mathtools}     
\usepackage{suffix}         
\usepackage{relsize}

\DeclarePairedDelimiterX\MeijerM[3]{\lparen}{\rparen}%
{\begin{smallmatrix}#1 \\ #2\end{smallmatrix}\delimsize\vert\,#3}

\newcommand\MeijerG[8][]{%
  G^{\,#2,#3}_{#4,#5}\MeijerM[#1]{#6}{#7}{#8}}

\WithSuffix\newcommand\MeijerG*[7]{%
  G^{\,#1,#2}_{#3,#4}\MeijerM*{#5}{#6}{#7}}

\begin{document}

\title[Divergence Free Polar Wavelets]{Divergence Free Polar Wavelets for the \\[5pt] Analysis and Representation of Fluid Flows}

\author[Christian Lessig]{Christian Lessig}

\address{Otto-von-Guericke Universit{\"a}t Magdeburg}

\email{lessig@isg.cs.uni-magdeburg.de}

\subjclass{Primary 42C40; Secondary 76B99}

\keywords{divergence freedom, wavelets}

\date{May 23, 2018}

\begin{abstract}
  We present a Parseval tight wavelet frame for the representation and analysis of velocity vector fields of incompressible fluids.
  Our wavelets have closed form expressions in the frequency and spatial domains, are divergence free in the ideal, analytic sense, have a multi-resolution structure and fast transforms, and an intuitive correspondence to common flow phenomena.
  Our construction also allows for well defined directional selectivity, e.g. to model the behavior of divergence free vector fields in the vicinity of boundaries or to represent highly directional features like in a von K{\'a}rm{\'a}n vortex street.
  We demonstrate the practicality and efficiency of our construction by analyzing the representation of different divergence free vector fields in our wavelets.
\end{abstract}

\maketitle


\section{Introduction}
\label{sec:introduction}

Turbulent fluids exhibit features at many different scales with fine details often being localized to small neighborhoods.
An efficient analysis and representation of fluid velocity vector fields hence has to adapt to these multiple scales and exploit their locality.
For incompressible fluids, it also should respect the divergence free nature of the vector field.

In this work, we present a vector-valued wavelet frame for the analysis and representation of incompressible fluids.
Our wavelets form a Parseval tight frame, are divergence free in the ideal, analytic sense, have an intuitive correspondence to vortices, closed form expressions in frequency and space, an associated multi-resolution structure, and fast transforms with filter taps that can be computed in closed form.
Furthermore, and in contrast to existing work, our construction also allows for well-defined angular selectivity, ranging from isotropic functions that can be interpreted as classical vortices to angularly highly selective, divergence free wavelets that provide a vector-valued analogue of curvelets.
 As we demonstrate, the directional wavelets are well suited to model divergence free vector fields in the vicinity of boundaries and around highly anisotropic features like those occurring in the wake behind an object.

The expedient properties of our wavelets result from a construction that exploits the intrinsic structure of divergence free vector fields in the Fourier domain, namely that the Fourier transform $\hat{\vec{u}}(\xi)$ is tangential to the sphere
\setlength{\columnsep}{4pt}
\begin{wrapfigure}[11]{r}[0pt]{0.4\columnwidth}
  \centering
  \vspace{-11pt}
  \includegraphics[width=0.4\columnwidth]{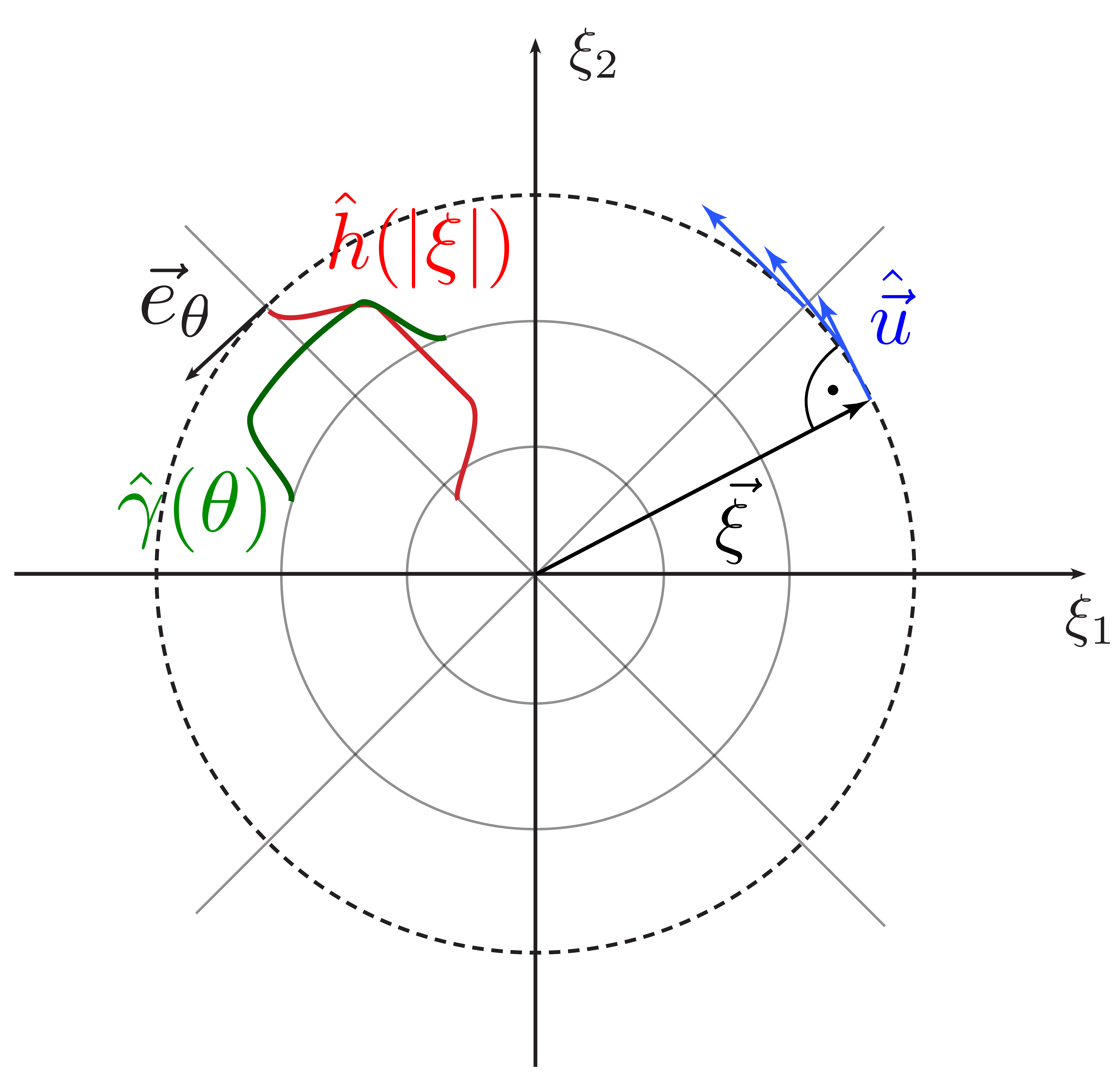}
  \vspace{-10pt}
\end{wrapfigure}
$\smash{S_{\xi}^{d-1}}$ in frequency space $\mathbb{R}_{\xi}^d$, see the inset figure.
To exploit this, we define our wavelets as $\smash{\hat{\vec{\psi}}(\xi)} = \hat{\psi}(\xi) \, \vec{\tau}$ where the vector component $\vec{\tau}$ is chosen such that it is tangential to $\smash{S_{\xi}^{d-1}}$ and $\hat{\psi}(\xi)$ is a scalar window function controlling the frequency localization.
We use bandlimited windows defined separably in polar or spherical coordinates as $\hat{\psi}(\xi) = \hat{\gamma}(\theta) \, \hat{h}(\vert \xi \vert)$ and which are hence compatible with the divergence freedom constraint.
Among other things, this enables us to obtain closed form expressions for the wavelets in the spatial domain.
The construction is particular simple in $\mathbb{R}^2$ where the angular basis vector $\vec{e}_{\theta_{\xi}}$ spans all non-radial vector fields, that is all whose inverse Fourier transform is divergence free, and hence our wavelets are given by $\hat{\psi}(\xi) = -i \,\hat{\gamma}(\theta_{\xi}) \, \hat{h}(\vert \gamma \vert) \,  \vec{e}_{\theta_{\xi}}$.

Since we use bandlimited window functions, our wavelets do not have compact support in the spatial domain.
One hence typically needs a large number of frame functions around the actual region of interest and it also implies that the filter taps for the fast transform usually have to be truncated, resulting in limited accuracy.
Nonetheless, our numerical results demonstrate that one can work efficiently with our wavelets and using only algorithms that operate in the spatial domain where it is easier to exploit sparsity.


Divergence free wavelets were first developed in the 1980s for four-dimensional spacetime~\cite{Federbush1986,Federbush1987a,Federbush1988}.
In the 1990s, several compactly supported, divergence-free wavelets for Euclidean space were proposed~\cite{LemarieRieusset1992,Battle1993,Battle1994,Urban1995}.
In comparison to our construction, the definitions of these wavelets are considerably more complicated and they do not provide well defined angular selectivity.
Our wavelets, in contrast, do not have compact support.
Closest to the present construction is recent work by Bostan, Unser and Ward~\cite{Bostan2015} who also proposed a tight, divergence free wavelet frame using window functions that are separable in polar (or spherical) coordinates.
In the 1990s a similar construction was proposed by Suter~\cite{Suter1994}, but to our knowledge without an implementation.
In contrast to our approach, Bostan, Unser and Ward~\cite{Bostan2015} enforce divergence freedom numerically by removing the radial frequency component of wavelets defined in Cartesian coordinates.
This makes an efficient spatial implementation of their wavelets difficult and prevents them from considering angular localization.
In the scalar setting, polar-separable windows and the resulting scalar polar wavelets have recently been studied systematically by Unser and co-workers~\cite{Chenouard2012,Unser2013,Ward2014}, extending previous work on steerable wavelets.
These results provide an important basis for our work.


\section{Divergence Free Polar Wavelets}
\label{sec:construction}

We begin the section by recalling the scalar polar wavelets recently introduced by Unser and co-workers~\cite{Chenouard2012,Unser2013,Ward2014}.
We then detail the construction of our divergence free wavelets in two dimensions before turning to the $3$-dimensional setting.
In Sec.~\ref{sec:div_free:fast_transform} we detail the computation of the filter taps required for the fast transform.
Our conventions have been relegated to Appendix~\ref{sec:conventions}.

\subsection{Scalar Polar Wavelet}
\label{sec:construction:scalar}

Polar wavelets~\cite{Chenouard2012,Unser2013,Ward2014,Lessig2018a} are separable in spherical coordinates in the Fourier domain $\mathbb{R}_{\xi}^d$.
They are hence given by $\hat{\psi}_s(\xi) = \hat{\gamma}_s(\bar{\xi}) \, \hat{h}_s(\vert \xi \vert)$ where $\hat{\gamma}_s(\bar{\xi})$ is an angular window over directions and $\smash{\hat{h}_s(\vert \xi \vert)}$ a radial one.
The multi-index $s = (j_s,k_s,t_s)$ describes the scale, position and orientation of the wavelets and, without loss of generality, we assume there are $M_j$ different orientations generated as rotations of one angular mother window $\hat{\gamma}(\bar{\xi})$.


\begin{figure}
  \begin{center}
  \includegraphics[width=0.24\columnwidth]{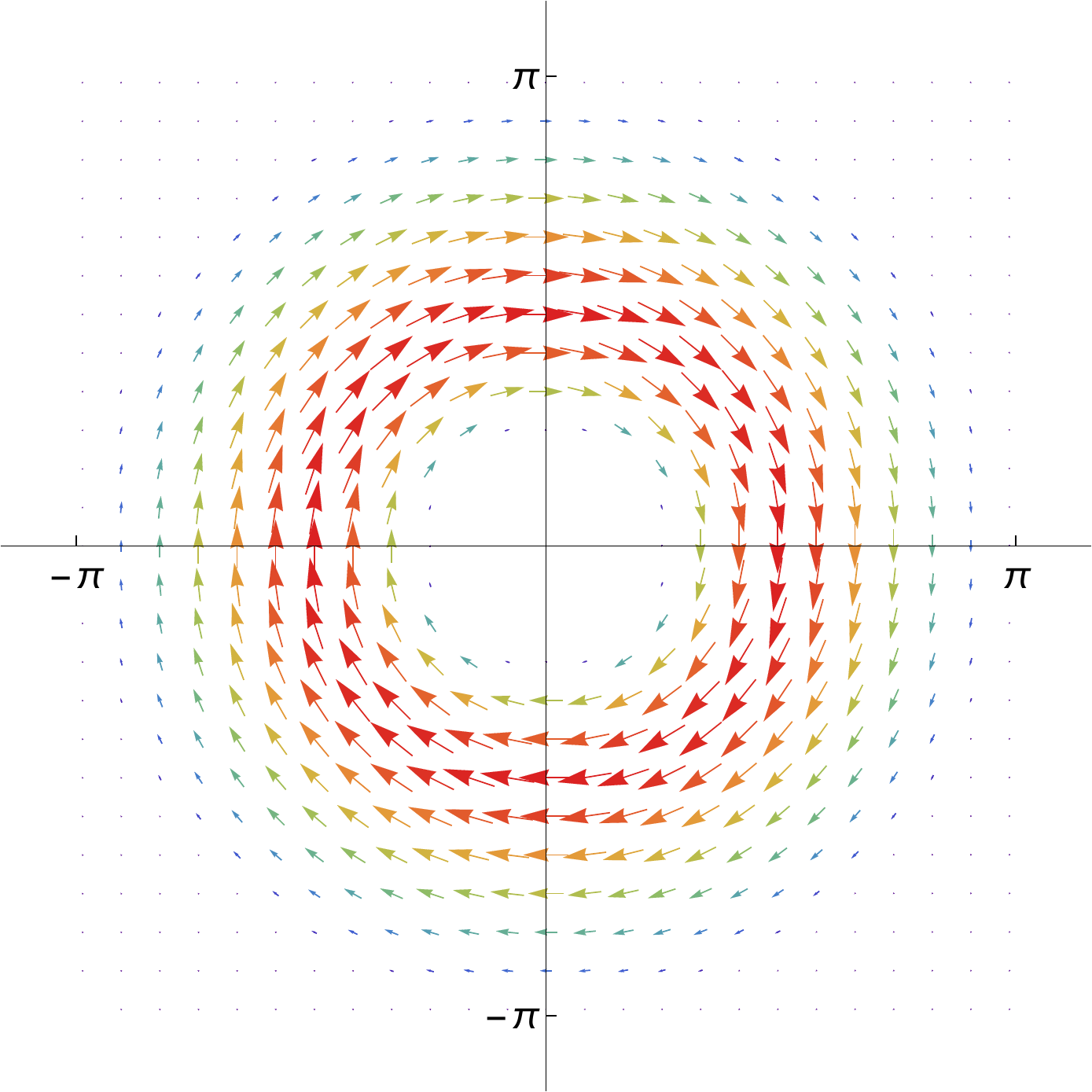}
  \includegraphics[width=0.24\columnwidth]{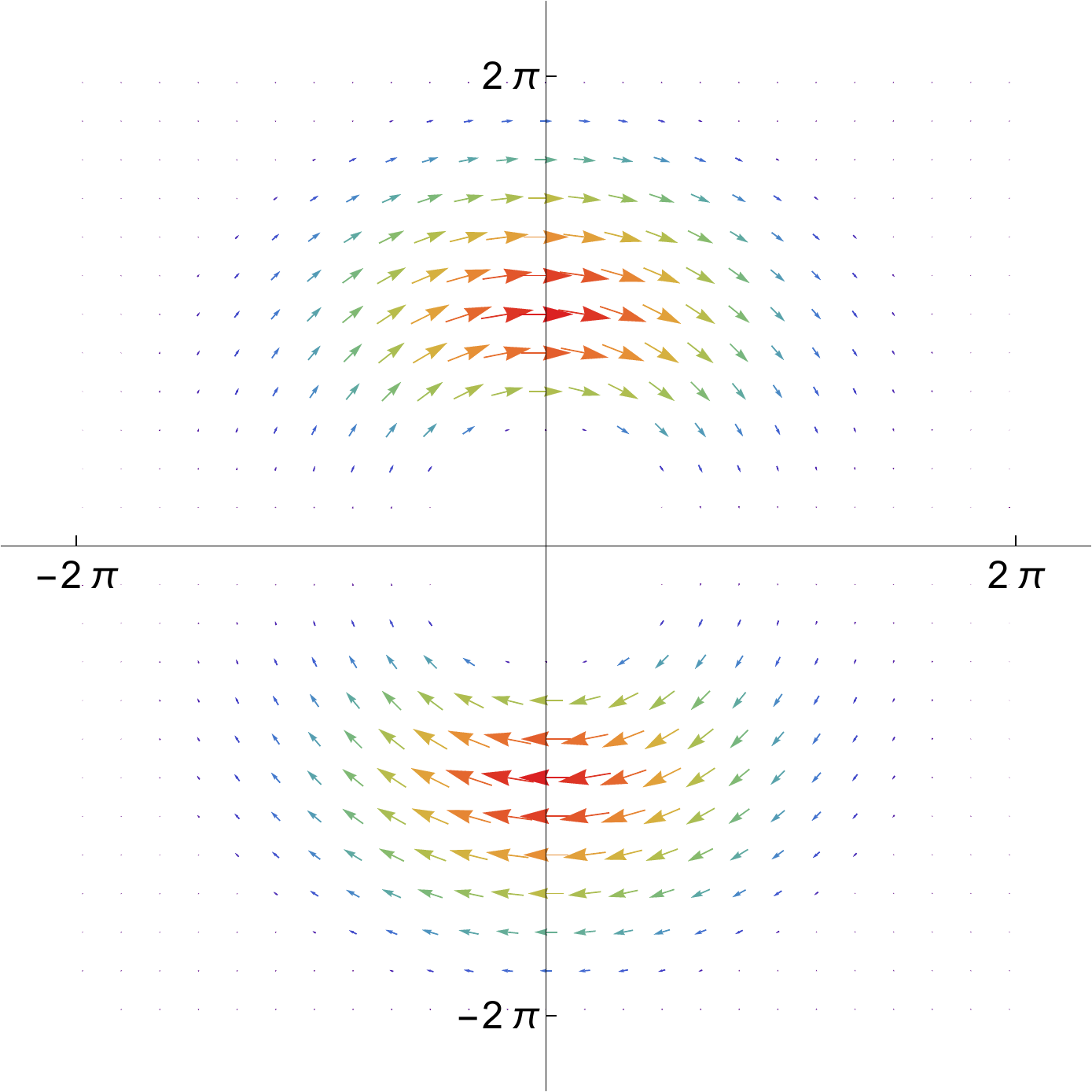}
  \includegraphics[width=0.24\columnwidth]{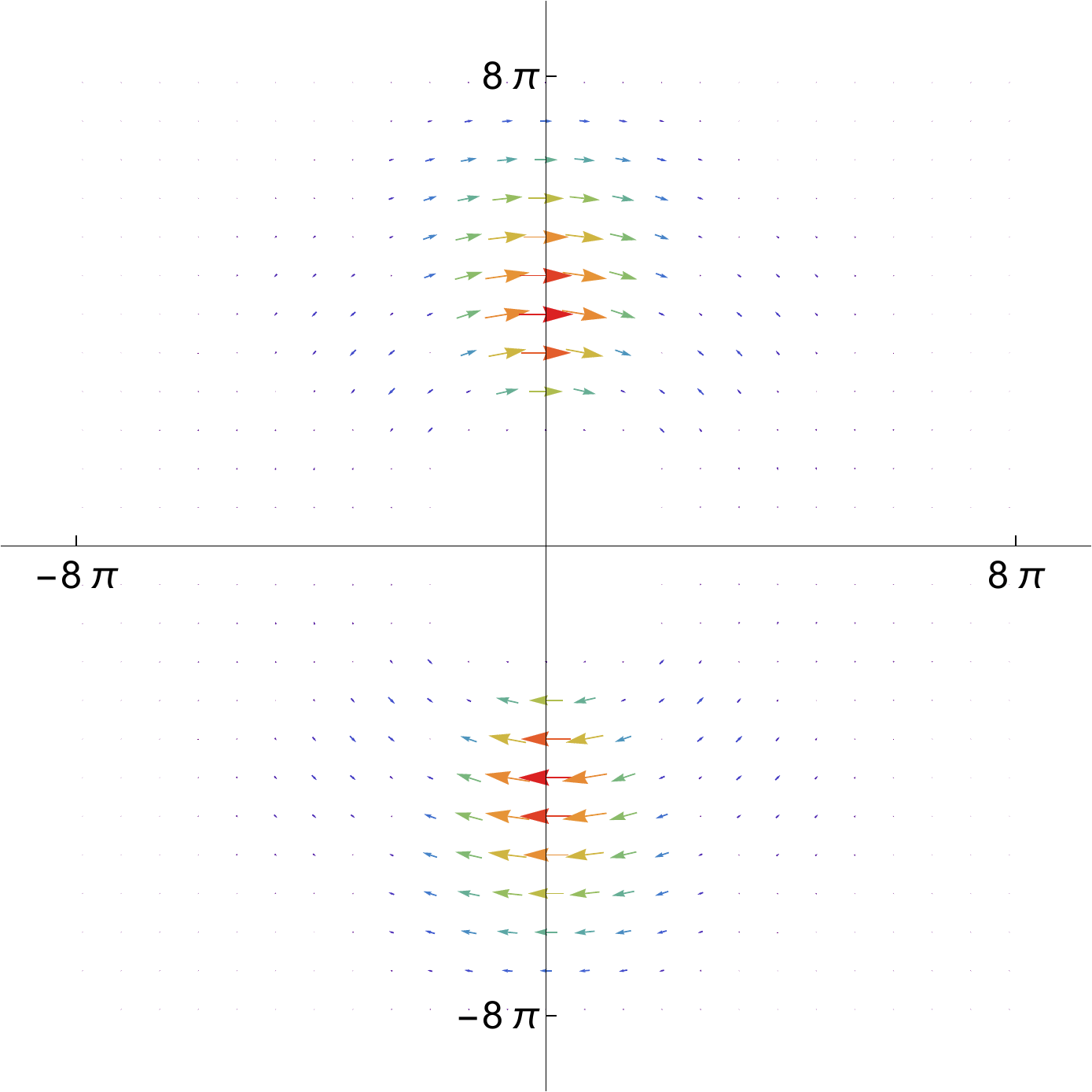}
  \includegraphics[width=0.24\columnwidth]{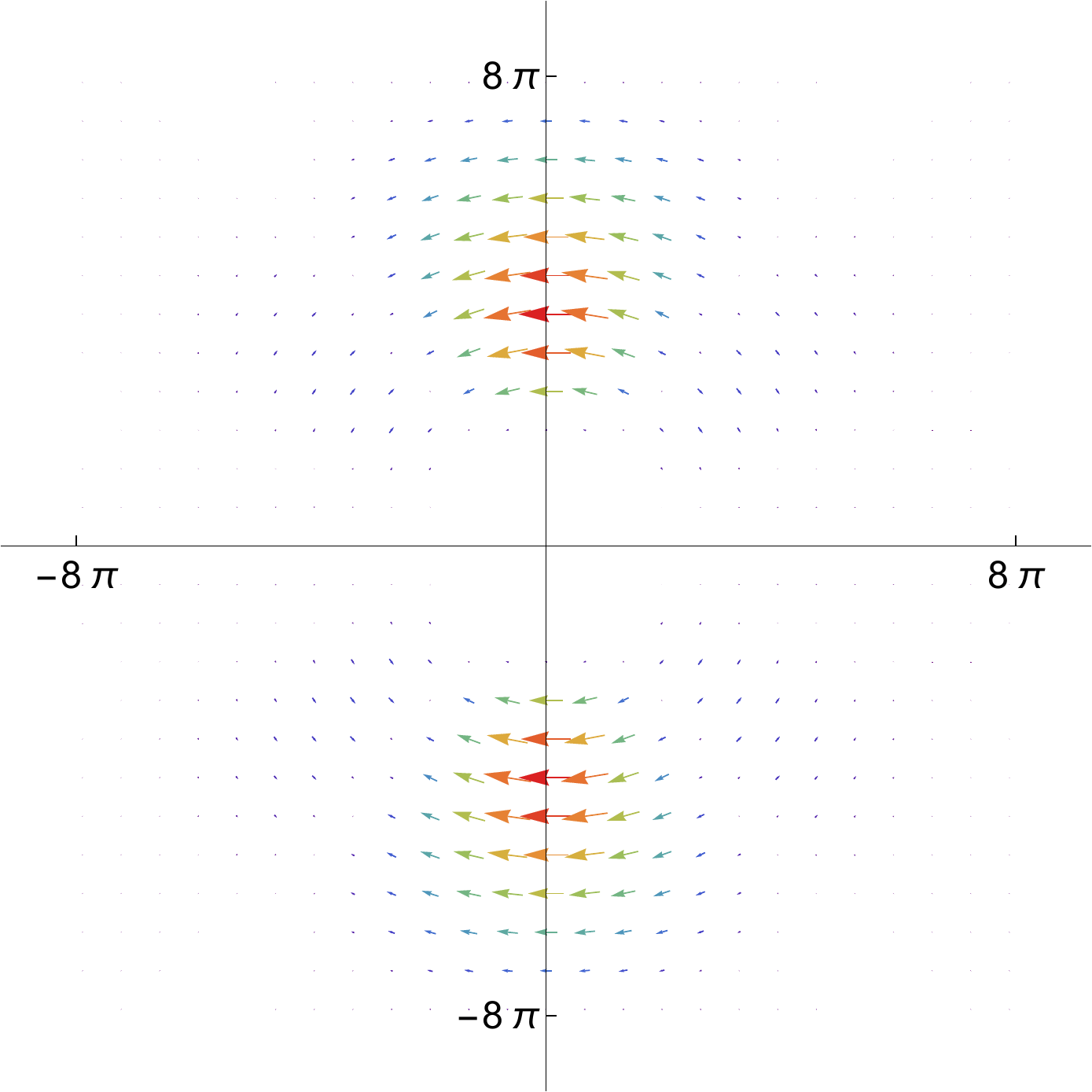}
  \\
  \includegraphics[width=0.24\columnwidth]{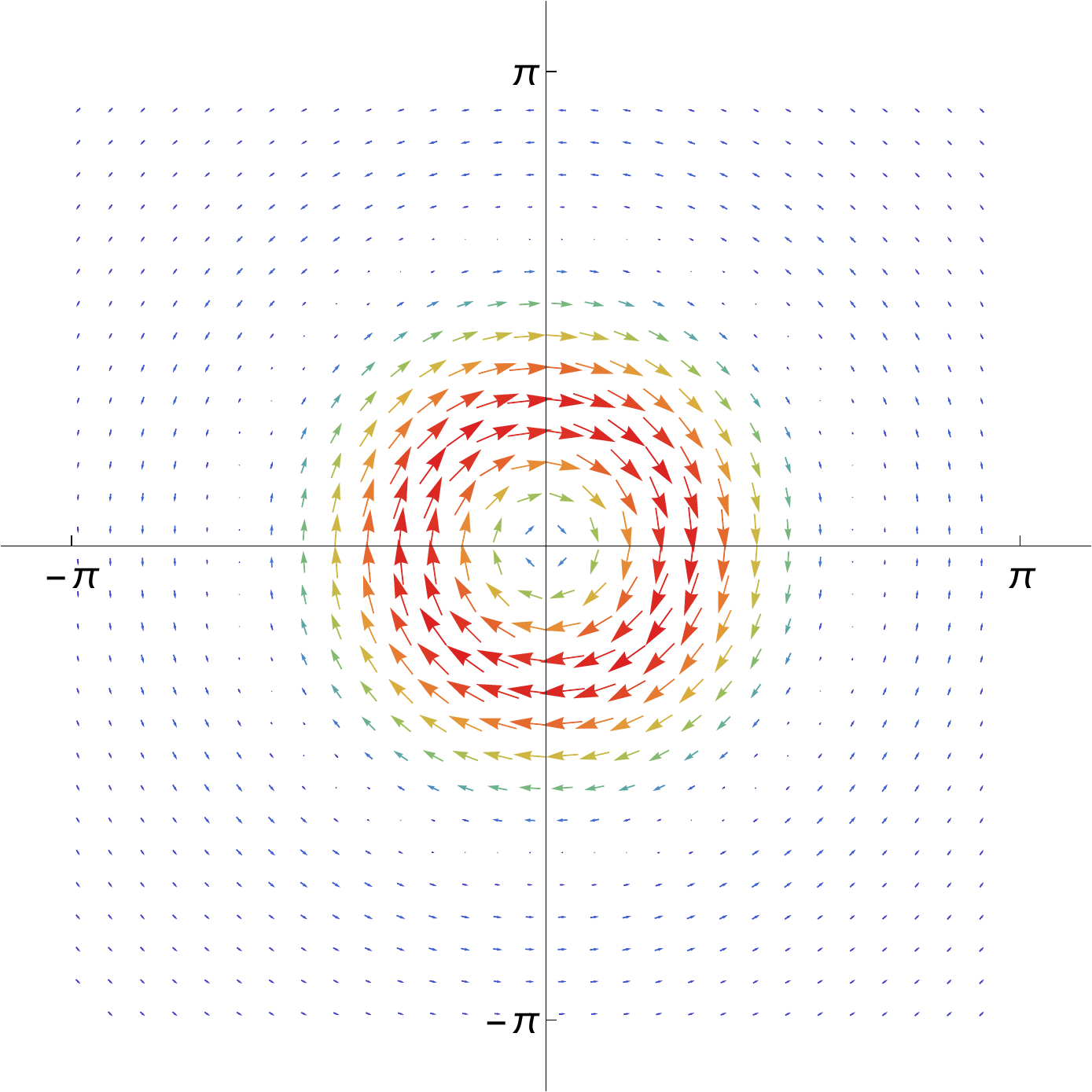}
  \includegraphics[width=0.24\columnwidth]{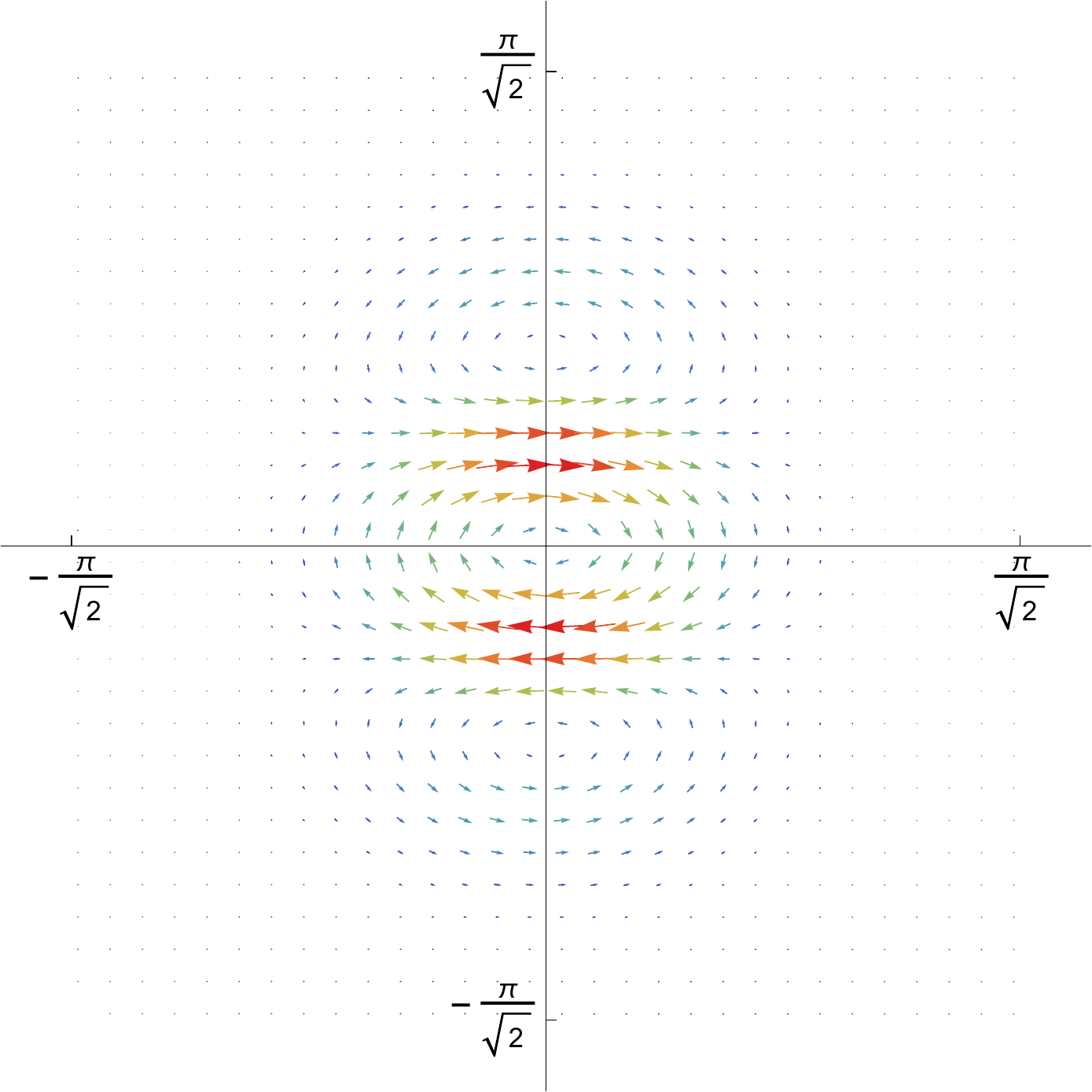}
  \includegraphics[width=0.24\columnwidth]{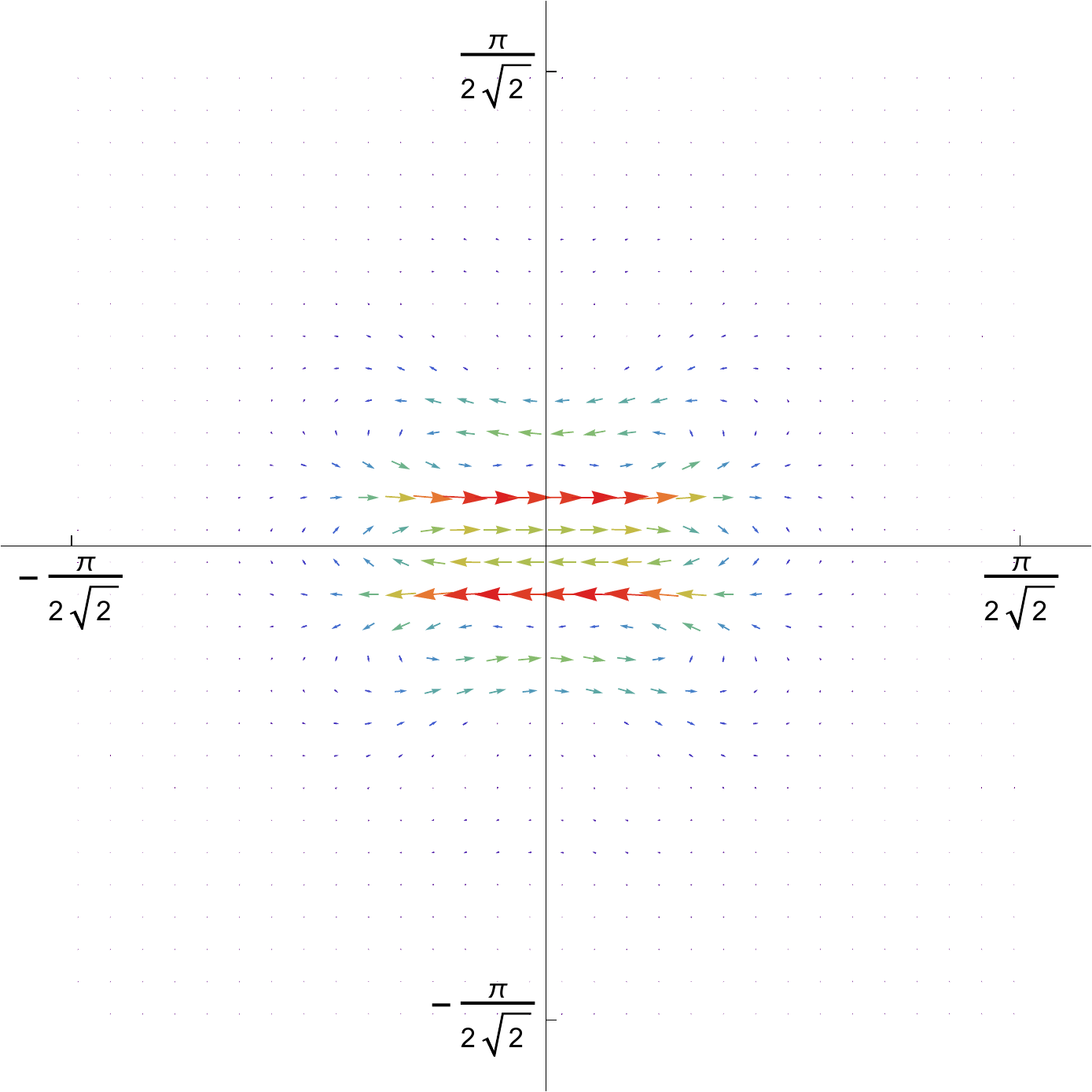}
  \includegraphics[width=0.24\columnwidth]{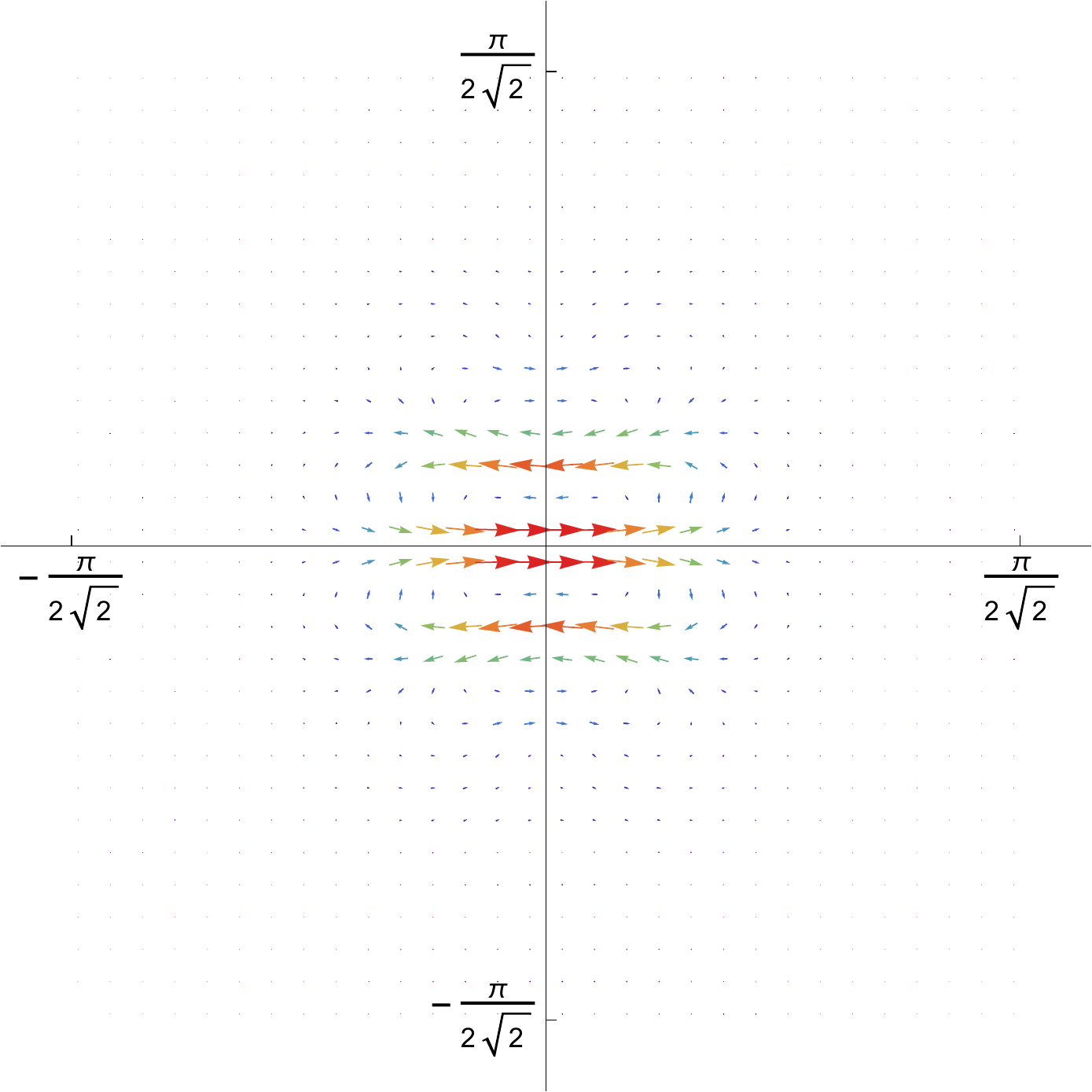}
  \end{center}
  \caption{Divergence free wavelets in the Fourier (top) and spatial domains (bottom). The left columns shows an isotropic function that can be interpreted as an isolated vortex. The remaining ones provide examples of directional wavelets for the horizontal orientation with different angular localizations and for shearing- (middle two columns) and streaming-like flows (right column).}
  \label{fig:psis:2d}
\end{figure}

In two dimensions, the angular window is naturally written as a Fourier series.
Polar wavelets are there hence given by
\begin{align}
  \label{eq:polarlets:2d}
  \hat{\psi}_{s}(\xi)
  = \! \Bigg( \sum_m \beta_{m}^{j_s,t_s} \, e^{i m \theta_{\xi}} \Bigg) \hat{h}\big(2^{-j_s} \vert \xi \vert \big) \, e^{-i \langle \xi , 2^{-j_s} k_s \rangle}
\end{align}
with $\beta_{n}^{j_s,t_s} = \beta_{n}^{j_s} \, e^{-i n t_s (2\pi / M_{j_s})}$, i.e. the different orientations are obtained by rotating a fixed, possibly level-dependent window function.
In the simplest case $\beta_n = \delta_{n 0}$ and one obtains isotropic functions.
The polar wavelets defined by Eq.~\ref{eq:polarlets:2d} also have a closed form expression in the spatial domain~\cite{Unser2013,Lessig2018a}.

The construction in two dimensions carries over to the $3$-dimensional setting when the Fourier series is replaced by a spherical harmonics expansion.
In $\mathbb{R}^3$, polar wavelets are hence defined by
\begin{align}
  \label{eq:polarlets:3d}
  \hat{\psi}_{s}(\xi)
  = \Bigg( \sum_{l,m} \kappa_{lm}^{j_s,t_s} \, y_{lm}(\bar{\xi}) \Bigg) \hat{h}(2^{-j_s} \vert \xi \vert) \, e^{-i \langle \xi , 2^{-j_s} k_s \rangle}
\end{align}
and one can again derive a closed form expression in the spatial domain~\cite{Lessig2018a}.

Under suitable admissibility conditions for $\hat{h}(\vert \xi \vert)$ and the coefficients $\beta_{j,n}^t$ and $\kappa_{lm}^{jt}$ defining the angular window functions $\hat{\gamma}_s(\bar{\xi})$, the wavelets in Eq.~\ref{eq:polarlets:2d} and Eq.~\ref{eq:polarlets:3d} form tight frames for $L_2(\mathbb{R}^2)$~\cite{Unser2013} and $L_2(\mathbb{R}^3)$~\cite{Ward2014}, respectively.
As usual, in practice on typically uses scaling functions, denoted as $\phi(x)$, on a coarsest level to represent the low frequency part of a signal.
We refer to the original works for more details.

\subsection{Divergence free polar wavelets in 2D}
\label{sec:construction:2d}

For a $2$-dimensional vector field to be divergence free it has to be tangential to the circle $S_{\vert \xi \vert}^1$ in frequency space for every radius $\vert \xi \vert$ for which it is defined.
These vector fields are hence naturally described in polar coordinates since then the radial vector component with respect to $\vec{e}_r$ has to vanish and only the angular one with respect to $\vec{e}_{\theta}$ can be nonzero.
A divergence free polar mother wavelet thus has to have the form
\label{eq:psi:divfree:2d}
\begin{align}
  \label{eq:psi:divfree:2d:frequency}
	\hat{\vec{\psi}}(\xi)
	= - i \, \hat{\gamma}(\theta_{\xi}) \, \hat{h}(\vert \xi \vert) \, \vec{e}_{\theta}
	= - i \Bigg( \! \sum_{n = -N}^N \beta_n \, e^{i n \theta_{\xi}} \Bigg) \, \hat{h}(\vert \xi \vert) \, \vec{e}_{\theta}
\end{align}
where the factor of $-i$ ensures that $\hat{\vec{\psi}}(\xi)$ is real-valued in space.
To obtain the spatial representation, we write $\vec{e}_{\theta_{\xi}}$ as $\vec{e}_{\theta_{\xi}} = (\sin{\theta_{\xi}}, -\cos{\theta_{\xi}})$ with respect to canonical Cartesian coordinates so that we can compute the inverse transform independently for each coordinate.
Then writing $\sin{\theta_{\xi}}$ and $-\cos{\theta_{\xi}}$ as a Fourier series and combining it with those of $\hat{\gamma}(\theta_{\xi})$ we can evaluate the inverse Fourier transform in polar coordinates, see Appendix~\ref{sec:appendix:spatial:2d} for details.
This yields
\begin{align}
  \label{eq:psi:divfree:2d:space}
	\vec{\psi}(x) &= \frac{1}{2}
    \sum_{\sigma \in \pm 1} \sum_{m} i^{m + \sigma} \, \beta_m  \, e^{i (m + \sigma) \theta_x}  h_{m + \sigma}(\vert x \vert)
  \begin{pmatrix}
    \textrm{-}\sigma
    \\
    i
  \end{pmatrix}
\end{align}
where the $h_{m + \sigma}(\vert x \vert)$ are the inverse Hankel transforms of $\hat{h}(\vert \xi \vert)$.
For the Simoncelli window~\cite{Portilla2000} these also have closed form expressions, which are available in the reference implementation.

Eq.~\ref{eq:psi:divfree:2d:space} defines a family of wavelet functions that is by construction divergence free and  whose angular localization can be controlled using the $\beta_m$, see Fig.~\ref{fig:psis:2d} for examples.
We will return to suitable choices of the angular localization at the end of the section.
In the special case when $\hat{\gamma}(\theta_{\xi})$ is isotropic, i.e. $\beta_m = \delta_{0m}$, one obtains $\vec{\psi}(x) = \hat{h}_1(\vert x \vert) \, \vec{e}_{\theta_x}$.
This is an isotropic, divergence free vector field in space that can be interpreted as an isolated vortex, see Fig.~\ref{fig:psis:2d}, left.

\begin{figure}[t]
  \begin{center}
  \includegraphics[width=0.24\textwidth]{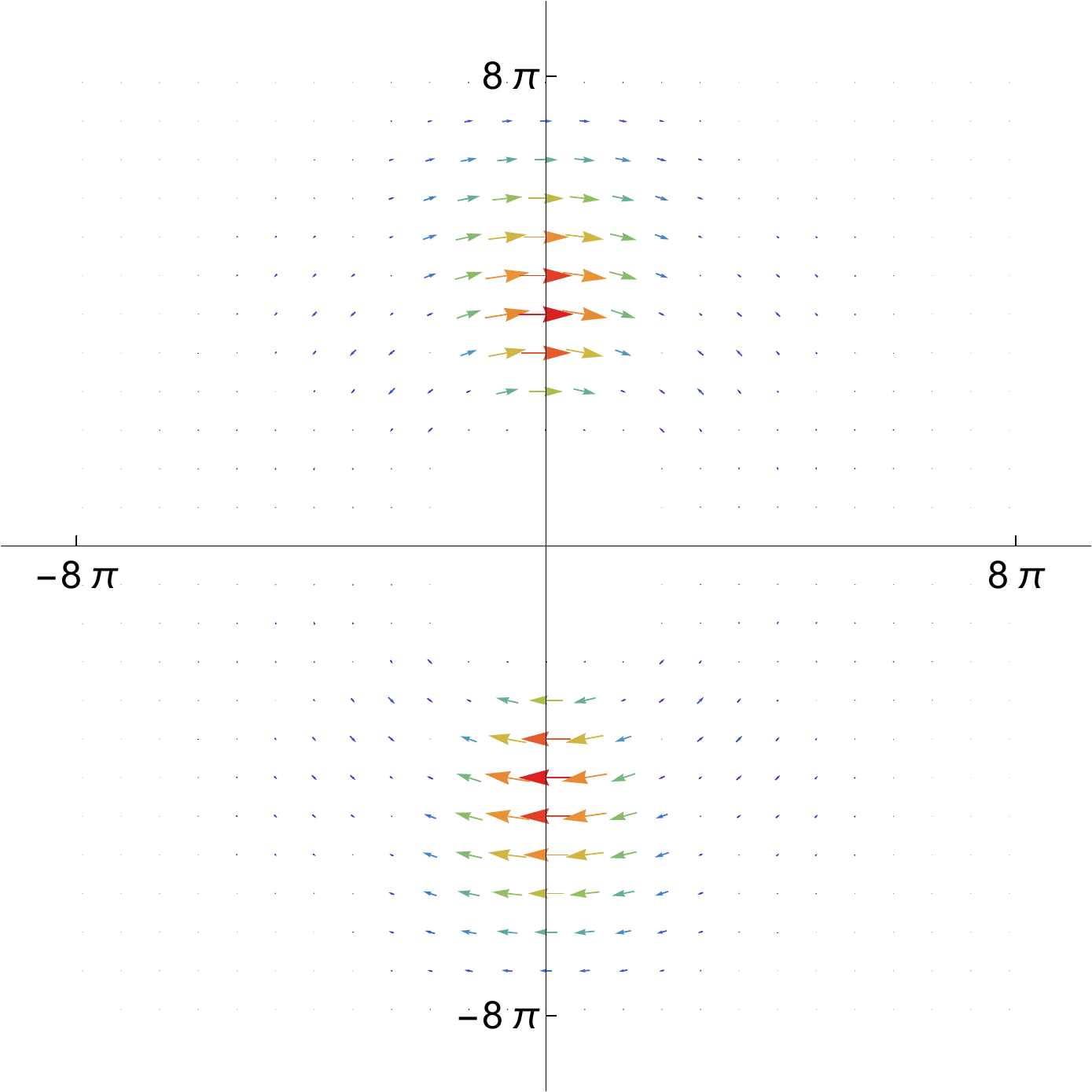}
  \includegraphics[width=0.24\textwidth]{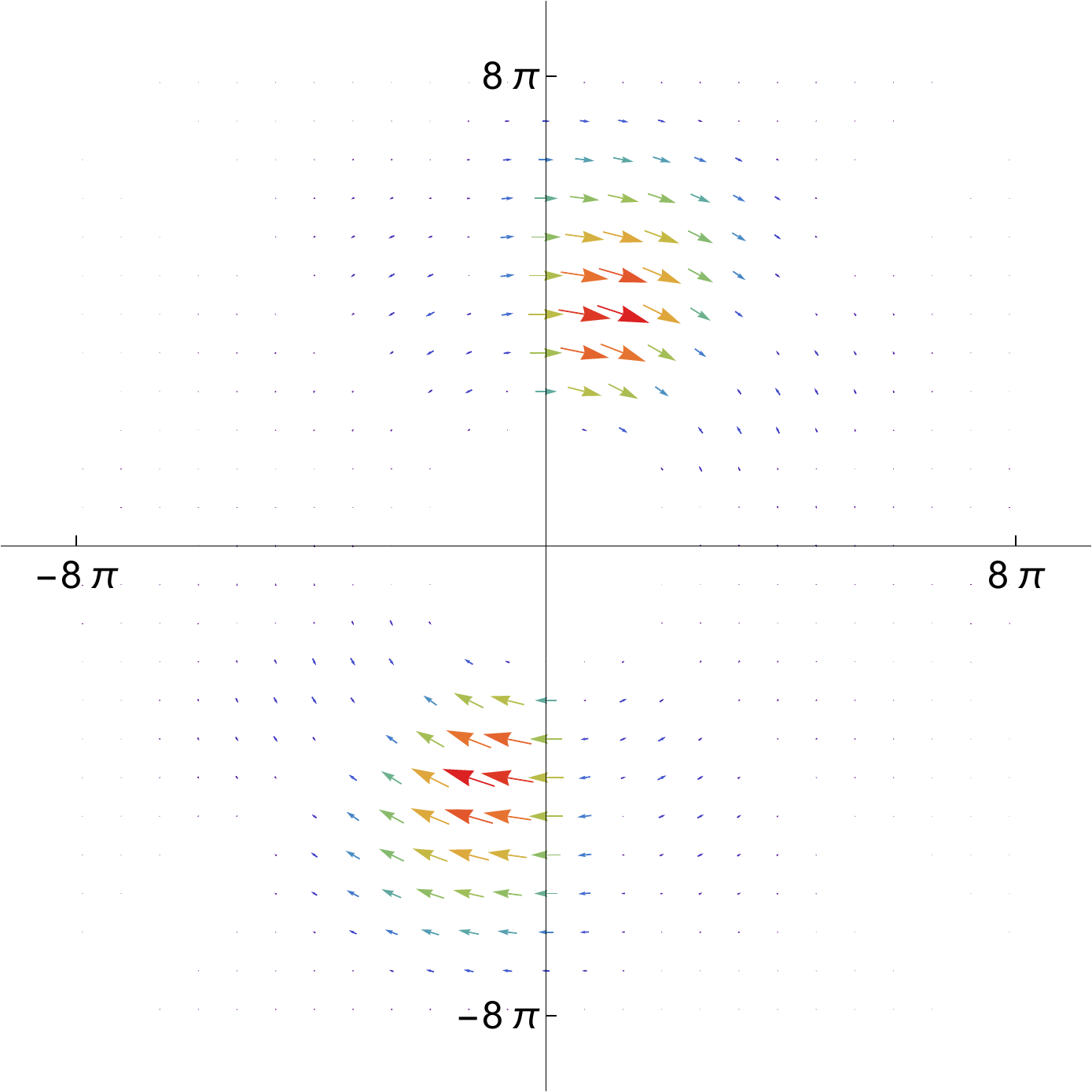}
  \includegraphics[width=0.24\textwidth]{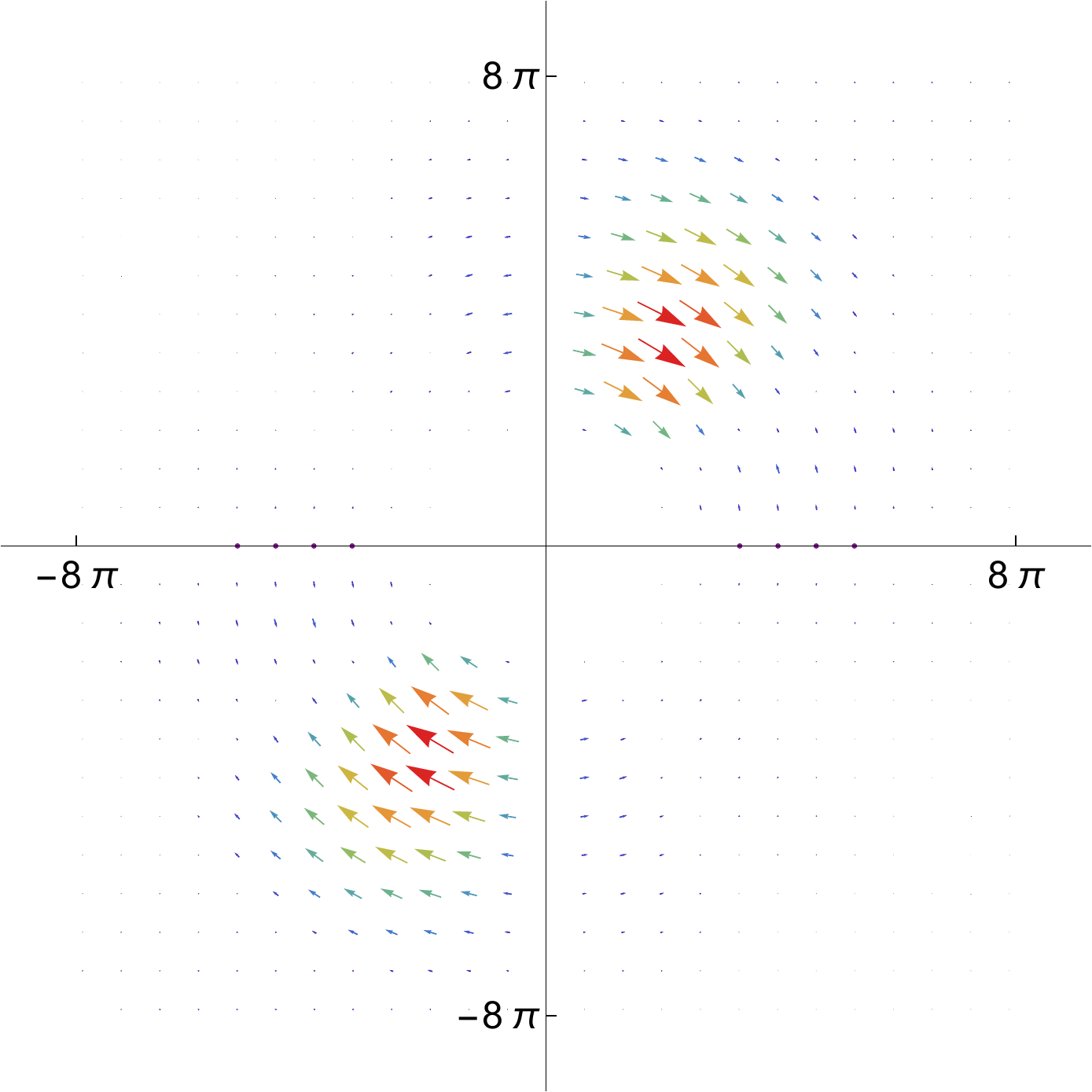}
  \includegraphics[width=0.24\textwidth]{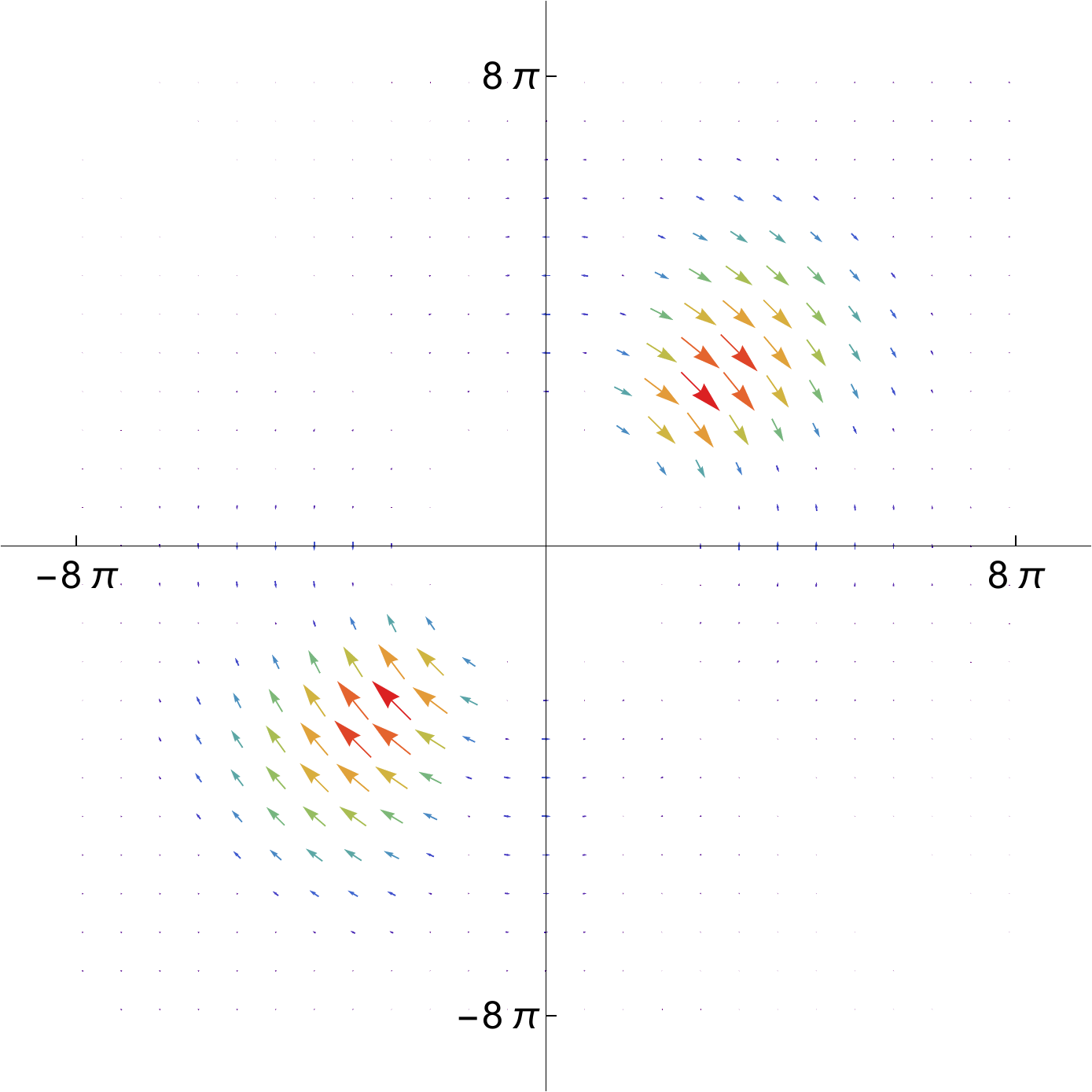}
  \includegraphics[width=0.24\textwidth]{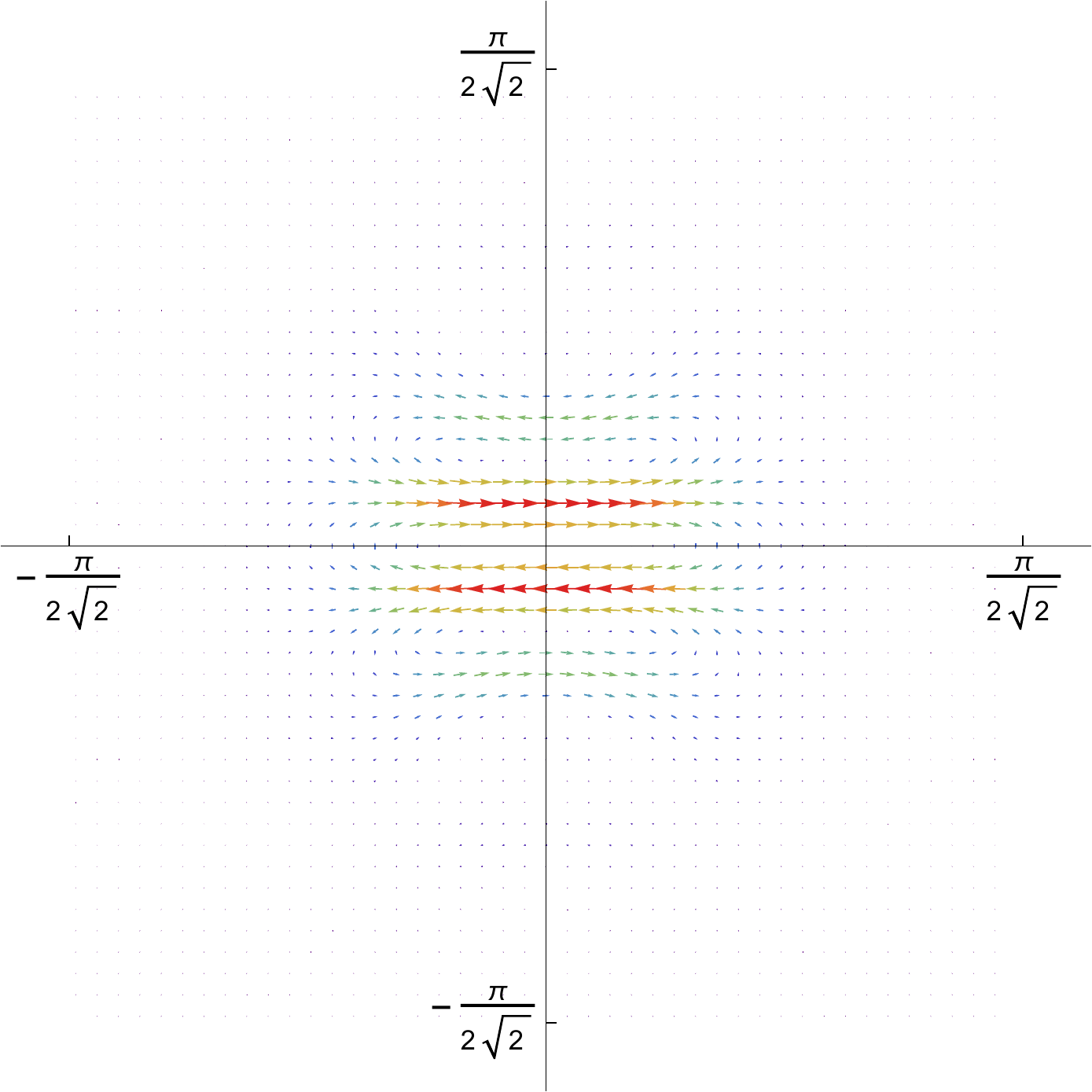}
  \includegraphics[width=0.24\textwidth]{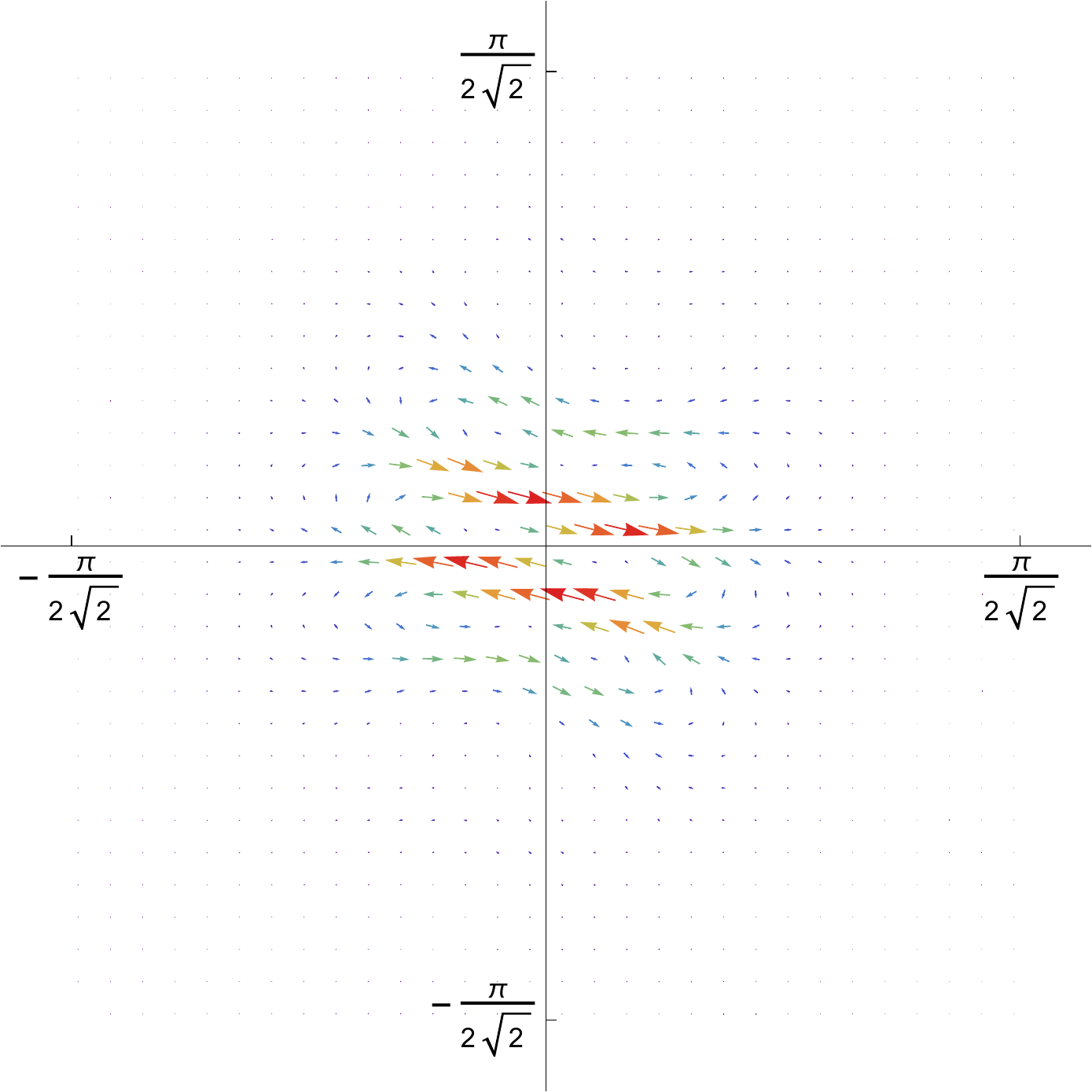}
  \includegraphics[width=0.24\textwidth]{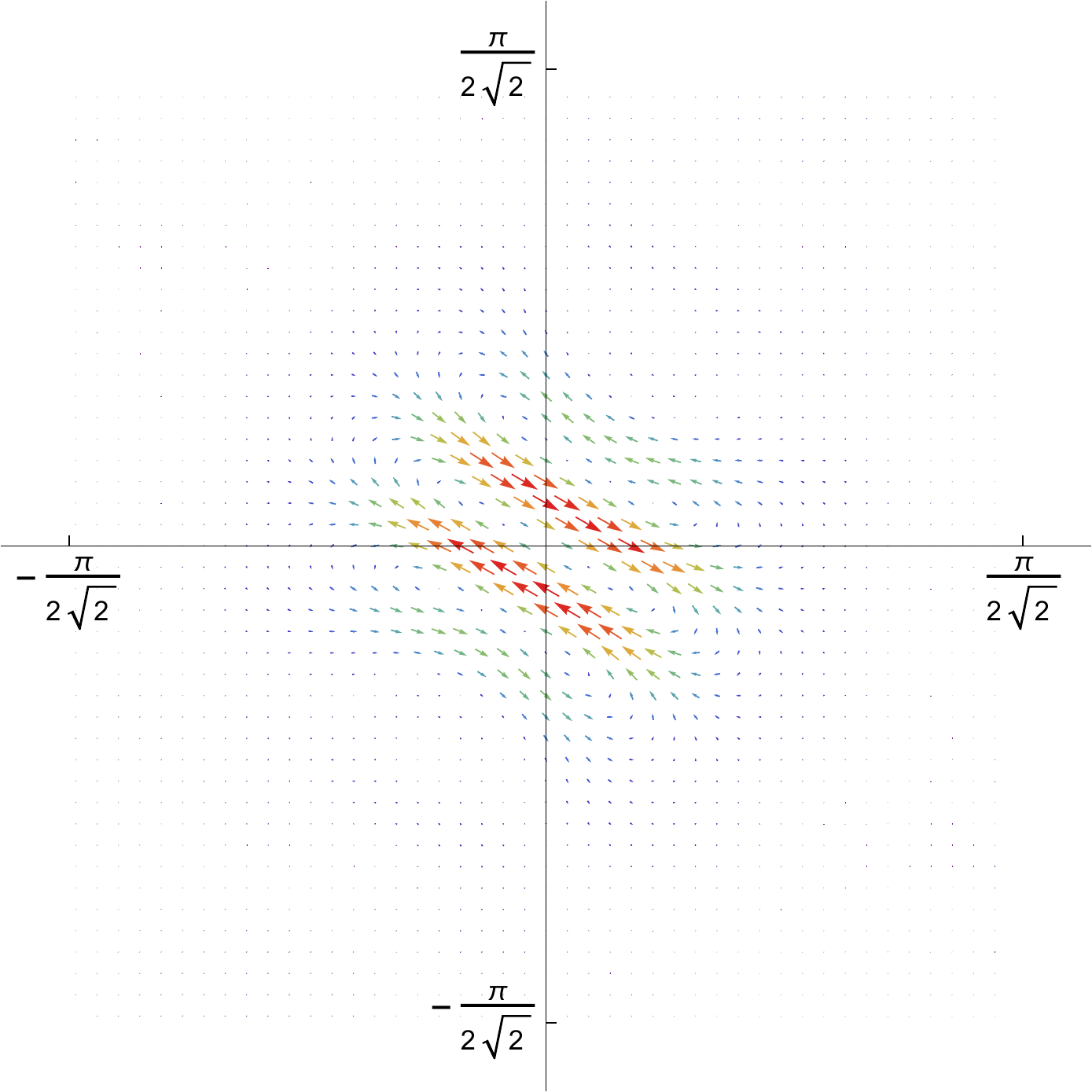}
  \includegraphics[width=0.24\textwidth]{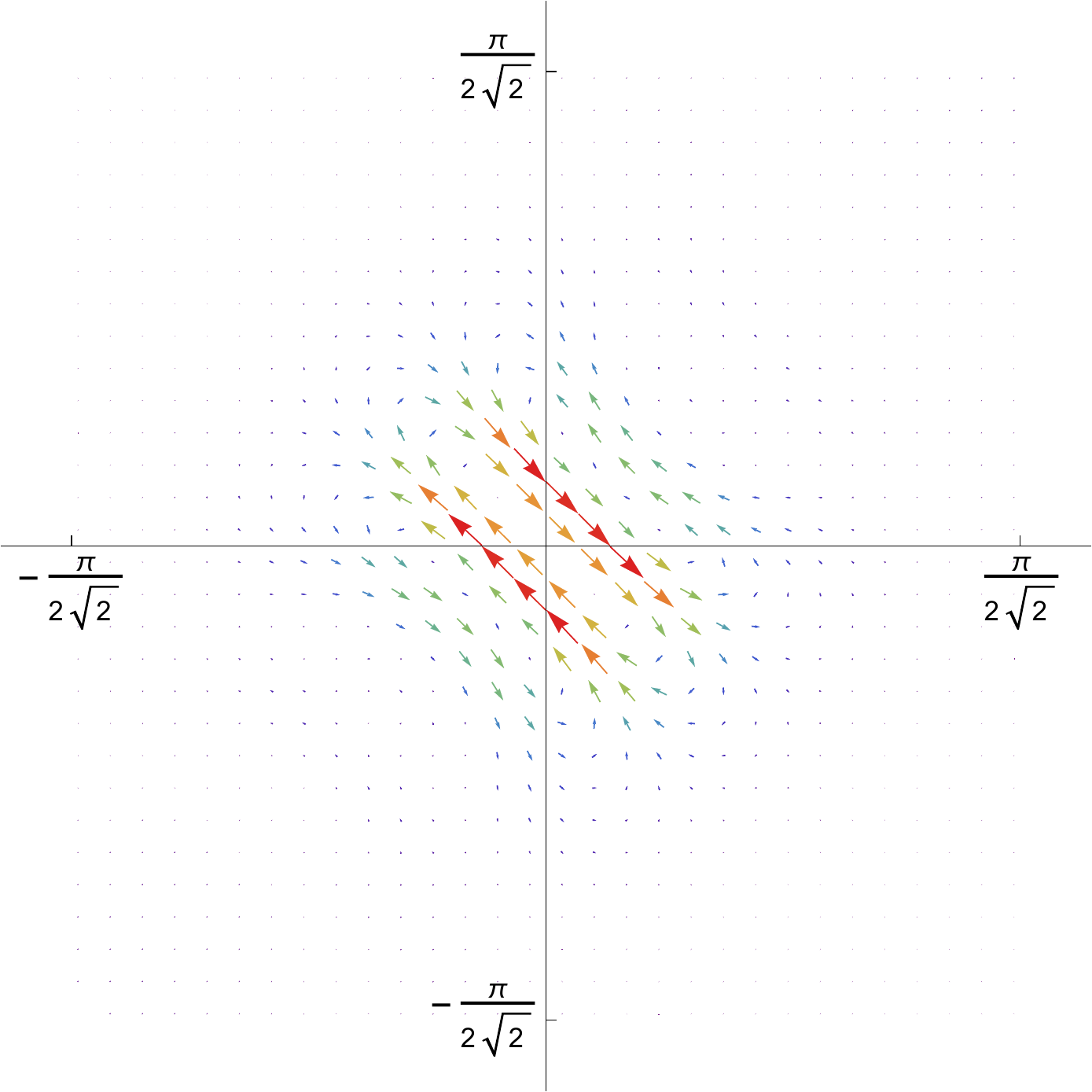}
  \end{center}
  \caption{Different orientations for directional wavelets for $j=3$ in the frequency (top) and spatial (bottom) domains. As shown in Sec~\ref{sec:construction:2d:directional} and Sec.~\ref{sec:experiments}, significant coefficients are only obtained when a flow has anisotropic structures aligned with the wavelet.}
  \label{fig:psis:ns:2d:ko}
\end{figure}

\subsubsection{Properties of wavelets}

A critical property of the wavelet defined in Eq.~\ref{eq:psi:divfree:2d:frequency} is that these generate a Parseval tight frame for the space $L_2^{\textrm{div}}(\mathbb{R}^{2,2})$ of divergence free vector fields with finite $L_2$-norm.
Hence, most of the conveniences of an orthonormal basis are available and in particular primary and dual frame functions coincide and Parseval's identity holds so that the norm of the expansion coefficients equals those of the signal.
See for example Daubechies classical treatise~\cite[Ch. 3]{Daubechies1992} or the expository articles by Kovacevic and Chebira~\cite{Kovacevic2007,Kovacevic2007a} for more details on frames.

\begin{proposition}
    \label{prop:polarlet:2d:parseval}
    Let $U_j$ be the $(M_j \times 2 N_j + 1)$-dimensional matrix formed by the angular localization coefficients $\beta_{n}^{j,t} = \beta_{n}^{j} \, e^{-i n t (2\pi / M_{j})}$ for the $M_j$ different orientations, and let $D_j$ be a diagonal matrix of size $(2 N_j + 1) \times (2 N_j + 1)$.
  When the Cald{\`e}ron admissibility condition $\sum_{j \in \mathbb{Z}} \big\vert \hat{h}( 2^{-j} \vert \xi \vert ) \big\vert^2 = 1$, $\forall \xi \in \mathbb{R}^2$ is satisfied and $U_j^H U_j = D_j$ with $\mathrm{tr}( D_j ) = 1$ for all levels $j$,
  then any divergence free vector field $\vec{u}(x) \in L_2^{\mathrm{div}}(\mathbb{R}^{2,2})$ has the representation
  \begin{subequations}
  \begin{align}
    \label{eq:polarlet:2d:parseval}
    \vec{u}(x) = \sum_{j \in \mathbb{Z}} \sum_{k \in \mathbb{Z}^2} \sum_{t=1}^{M_j} \big\langle \vec{u}(y) , \vec{\psi}_{j,k,t}(y) \big\rangle \, \vec{\psi}_{j,k,t}(x)
  \end{align}
  with frame functions
  \begin{align}
    \vec{\psi}_{j,k,t}(x) = \frac{2^j}{2\pi} \, \vec{\psi}\big( R_{2\pi t / M_j} (2^j x - k) \big)
  \end{align}
  \end{subequations}
  where $\vec{\psi}(x)$ is given by Eq.~\ref{eq:psi:divfree:2d:space} and $R_{2\pi t / M_j}$ is the rotation by $2\pi t / M_j$.
\end{proposition}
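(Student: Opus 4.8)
The plan is to reduce the vector-valued statement to the scalar polar wavelet tight-frame result quoted above~\cite{Unser2013}, exploiting the feature special to two dimensions that the tangent space of $S^1_{\vert \xi \vert}$ at each frequency is one-dimensional. First I would record the structural fact that, for $\vec{u} \in L_2^{\mathrm{div}}(\mathbb{R}^{2,2})$, divergence freedom is equivalent to $\hat{\vec{u}}$ being tangential to the circles $S^1_{\vert \xi \vert}$, so that $\hat{\vec{u}}(\xi) = \hat{u}(\xi)\,\vec{e}_{\theta_\xi}$ for the scalar field $\hat{u}(\xi) = \langle \hat{\vec{u}}(\xi), \vec{e}_{\theta_\xi}\rangle$. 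Since $\vec{e}_{\theta}$ is a unit vector, Plancherel gives $\Vert \vec{u} \Vert = \Vert u \Vert$, where $u$ is the scalar function with Fourier transform $\hat{u}$; thus $\vec{u} \mapsto u$ is an isometry of $L_2^{\mathrm{div}}(\mathbb{R}^{2,2})$ into the scalar space $L_2(\mathbb{R}^2)$.

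Next I would observe that every frame function $\vec{\psi}_{j,k,t}$ is itself divergence free, hence also tangential, so $\hat{\vec{\psi}}_{j,k,t}(\xi) = \hat{\psi}_{j,k,t}(\xi)\,\vec{e}_{\theta_\xi}$ with the \emph{same} unit vector as $\hat{\vec{u}}$. Consequently the vector inner product collapses to a scalar one, $\langle \hat{\vec{u}}, \hat{\vec{\psi}}_{j,k,t}\rangle = \int \hat{u}(\xi)\,\overline{\hat{\psi}_{j,k,t}(\xi)}\,\langle \vec{e}_{\theta_\xi}, \vec{e}_{\theta_\xi}\rangle \, d\xi = \langle \hat{u}, \hat{\psi}_{j,k,t}\rangle$ because $\langle \vec{e}_{\theta}, \vec{e}_{\theta}\rangle = 1$. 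The core of the reduction is then to identify the scalar parts $\hat{\psi}_{j,k,t}$ with the scalar polar wavelets of Eq.~\ref{eq:polarlets:2d}: dilation and translation act only on $\vert \xi \vert$ and through the phase $e^{-i\langle \xi, 2^{-j}k\rangle}$ and leave the angle, hence $\vec{e}_{\theta}$, untouched, while rotating a tangential field by $R_{2\pi t/M_j}$ sends $\hat{\gamma}(\theta_\xi)$ to $\hat{\gamma}(\theta_\xi - 2\pi t/M_j)$ and carries $\vec{e}_{\theta_\xi - 2\pi t/M_j}$ back to $\vec{e}_{\theta_\xi}$, so the scalar component is exactly the rotated scalar window. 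The prefactor $-i$ has unit modulus and cancels between analysis and synthesis, so that both the coefficient-energy sum $\sum_{j,k,t}\vert\langle \vec{u},\vec{\psi}_{j,k,t}\rangle\vert^2$ and the synthesis sum $\sum_{j,k,t}\langle \vec{u},\vec{\psi}_{j,k,t}\rangle\,\vec{\psi}_{j,k,t}$ reduce, in the Fourier domain, to the corresponding scalar frame expressions for $u$ (the synthesis being carried pointwise by the common factor $\vec{e}_{\theta}$).

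With the reduction in place, the proposition follows once the matrix hypotheses are shown to encode the scalar angular Parseval condition. Writing $\hat{\gamma}^{j,t}(\theta) = \sum_n \beta_n^{j,t} e^{in\theta}$ and $(U_j)_{tn} = \beta_n^{j,t}$, I would compute
\begin{align*}
  \sum_{t=1}^{M_j} \big\vert \hat{\gamma}^{j,t}(\theta) \big\vert^2 = \sum_{n,n'} \big( U_j^H U_j \big)_{n'n}\, e^{i(n-n')\theta},
\end{align*}
so the assumption $U_j^H U_j = D_j$ with $D_j$ diagonal annihilates every $\theta$-dependent term and leaves the constant $\mathrm{tr}(D_j) = 1$. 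Combined with the Cald\`eron identity $\sum_{j \in \mathbb{Z}} \vert \hat{h}(2^{-j}\vert \xi \vert)\vert^2 = 1$ this yields the full scalar Parseval condition $\sum_{j,t} \vert \hat{\gamma}^{j,t}(\theta_\xi)\vert^2 \vert \hat{h}(2^{-j}\vert \xi \vert)\vert^2 = 1$, which is precisely what~\cite{Unser2013} requires. Applying that result to $u \in L_2(\mathbb{R}^2)$ gives $\sum_{j,k,t}\vert\langle u,\psi_{j,k,t}\rangle\vert^2 = \Vert u \Vert^2$, and transporting back through the isometry yields the Parseval identity for $\vec{u}$; polarization then promotes it to the reconstruction formula~\ref{eq:polarlet:2d:parseval}.

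The step I expect to be the main obstacle is the careful verification that the three group actions---dilation, translation, and especially rotation---preserve tangentiality and that the scalar component of the transformed vector wavelet coincides exactly, phase included, with the transformed scalar polar wavelet; this is where the rank-one projector $\vec{e}_{\theta}\otimes\vec{e}_{\theta}$ must be seen to restrict to the identity on the one-dimensional tangent line, the feature that makes the two-dimensional case collapse so cleanly and that has no direct analogue in $\mathbb{R}^3$. A minor point to dispatch is that the scalar functions $u$ arising here inherit the anti-Hermitian symmetry $\hat{u}(-\xi) = -\overline{\hat{u}(\xi)}$ forced by reality of $\vec{u}$; this causes no difficulty, since the scalar frame is Parseval on all of $L_2(\mathbb{R}^2)$ and hence, a fortiori, on this subspace.
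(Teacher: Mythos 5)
Your proposal is correct and takes essentially the same route as the paper's own proof: pass to the Fourier domain, use divergence freedom to factor out the common unit tangent vector $\vec{e}_{\theta_\xi}$ so that the coefficients and the synthesis sum collapse to a scalar polar wavelet expansion, and then invoke the scalar Parseval tight frame results of Unser and co-workers~\cite{Unser2013} under the same hypotheses. Your additional verifications---that $U_j^H U_j = D_j$ with $\mathrm{tr}(D_j) = 1$ reproduces the angular Parseval condition, that the group actions preserve tangentiality, and the remark on the anti-Hermitian symmetry of $\hat{u}$---merely spell out details the paper leaves implicit in its citation of the scalar case.
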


\begin{proof}
Taking the Fourier transform of Eq.~\ref{eq:polarlet:2d:parseval}, using Parseval's theorem, and with Eq.~\ref{eq:psi:divfree:2d:frequency} one obtains
\begin{subequations}
\begin{align}
    \hat{u}(\xi) \, \vec{e}_{\theta}
    &= \sum_{j \in \mathbb{Z}} \sum_{k \in \mathbb{Z}^2} \sum_{t=1}^{M_j} \Big\langle \hat{u}(\eta) \, \vec{e}_{\theta} , \hat{\psi}_{j,k,t}(\eta) \, \vec{e}_{\theta} \Big\rangle \, \hat{\psi}_{j,k,t}(\xi) \, \vec{e}_{\theta}
    \\
    \label{prop:polarlet:2d:parseval:2}
    &= \underbrace{\sum_{j \in \mathbb{Z}} \sum_{k \in \mathbb{Z}^2} \sum_{t=1}^{M_j} \Big\langle \hat{u}(\eta) , \hat{\psi}_{j,k,t}(\eta) \Big\rangle \, \hat{\psi}_{j,k,t}(\xi)}_{\textrm{scalar polar wavelet expansion of $\hat{u}(\xi)$}} \, \vec{e}_{\theta}
\end{align}
\end{subequations}
with the inner product in the second line now being those for scalar functions and $\hat{\vec{u}}(\eta) = \hat{u}(\eta) \, \vec{e}_{\theta}$ for some scalar $\hat{u}(\eta)$ by the divergence freedom of $\vec{u}(x)$.
In Eq.~\ref{prop:polarlet:2d:parseval:2}, $\vec{e}_{\theta}$ is fixed and the other terms provide a scalar polar wavelet expansion of the scalar magnitude function $\hat{u}(\xi)$.
Since the proposition uses the same assumptions required for scalar polar wavelets to form a Parseval tight frame, see~\cite[Proposition 4.1]{Unser2013} for the isotropic case and~\cite[Proposition 4.2]{Unser2013} for the anisotropic one, the result follows.
\end{proof}

As usual, in practice one uses scaling functions $\phi_s(x)$ for a signal's low frequency part and a variation of Proposition~\ref{prop:polarlet:2d:parseval} holds in this case.
Next to being Parseval tight, our divergence free wavelets satisfy other useful properties:
\begin{enumerate}
  \item \emph{Intuitive correspondence:} The isotropic frame functions can be interpreted as isolated vortices and the directional wavelets are similar to flows along boundaries.
  \item \emph{Ideal divergence freedom:} Our wavelets are divergence free in the ideal, analytic sense and, with $\hat{h}(\vert \xi \vert)$ from~\cite{Portilla2000}, which yields a closed form expression for the radial windows $h_m(\vert x \vert)$ in the spatial domain, they can be evaluated to arbitrary precision.
  \item \emph{Analytic, polar frequency representation:} The analytic, polar separable construction of $\smash{\hat{\vec{\psi}}(\xi)}$ enables the closed form computation of differential operators such as curl, i.e. the vorticity of the fluid velocity vector field.
  \item \emph{Closed form spatial representation:} The wavelets have closed form expressions in the spatial domain, which are useful for example for reconstruction and interpolation~\cite{Lessig2018a} and to compute advection.
  \item \emph{Multi-resolution structure and fast transform:} The multi-resolution structure and associated fast transform of our wavelets enable efficient analysis and reconstruction.
\end{enumerate}

\begin{figure}
  \includegraphics[trim={80 100 30 20},clip,width=0.32\textwidth]{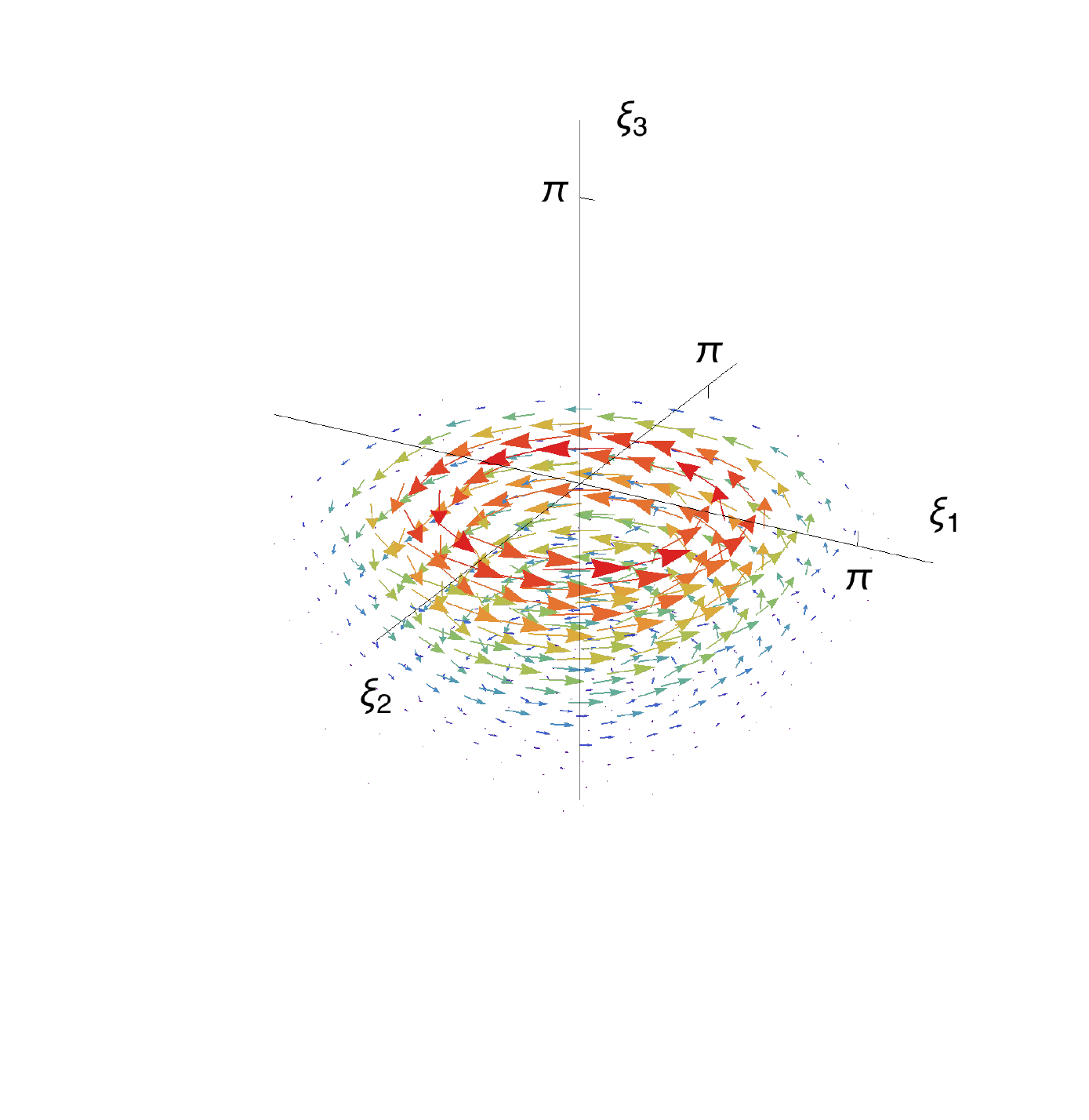}
  \includegraphics[trim={80 100 30 20},clip,width=0.32\textwidth]{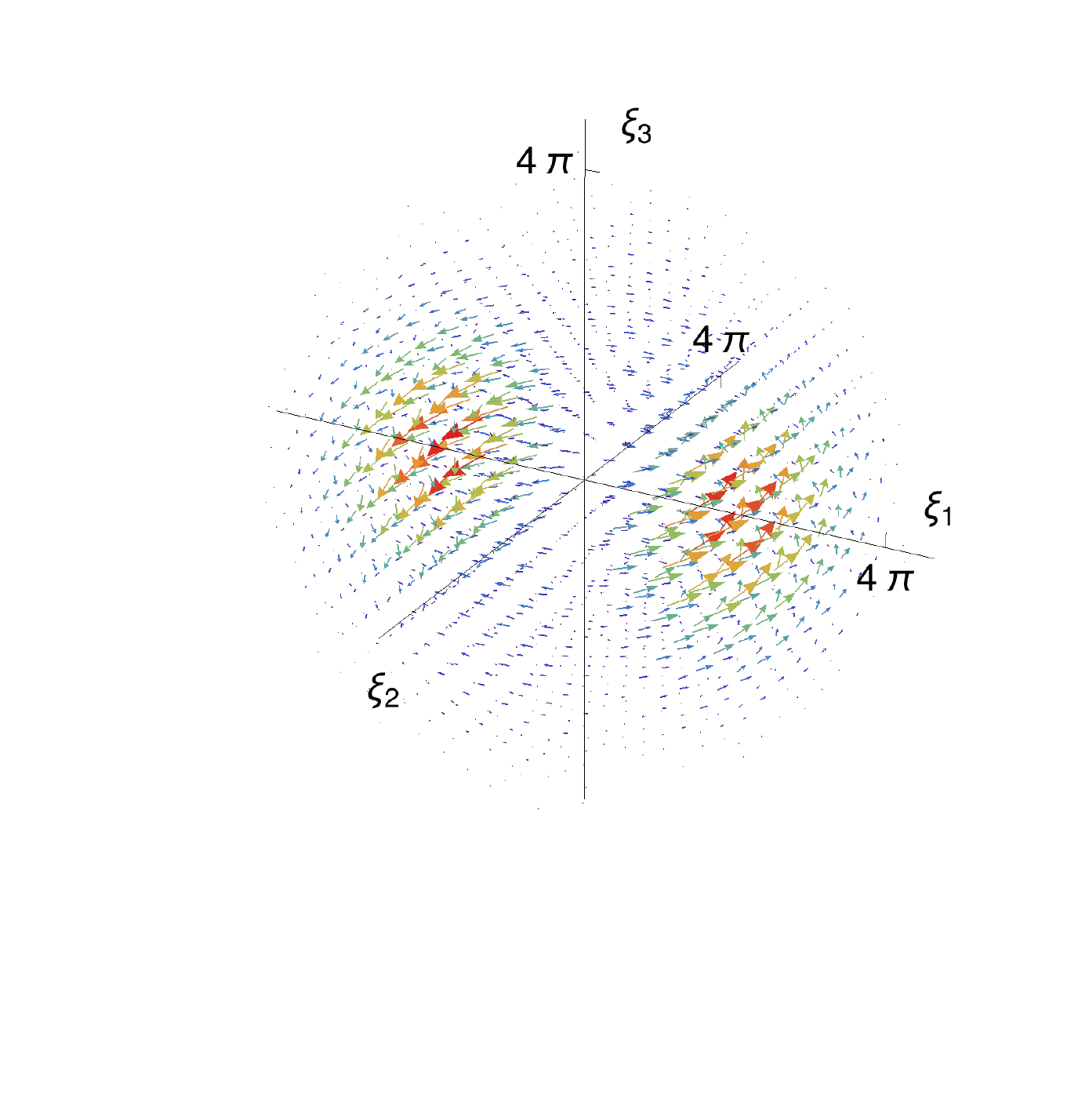}
  \includegraphics[trim={80 100 30 20},clip,width=0.32\textwidth]{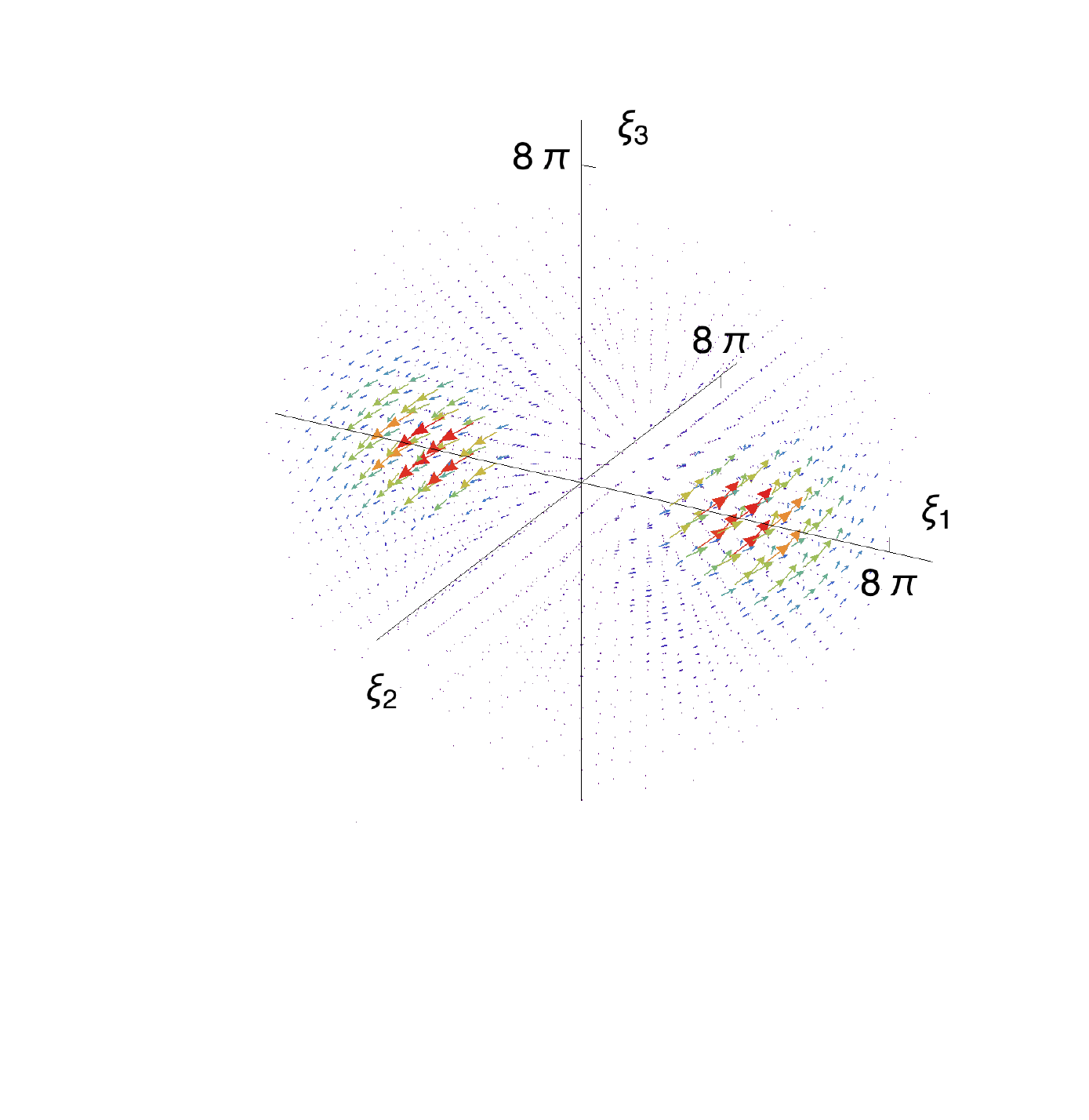}
  \includegraphics[trim={80 100 30 20},clip,width=0.32\textwidth]{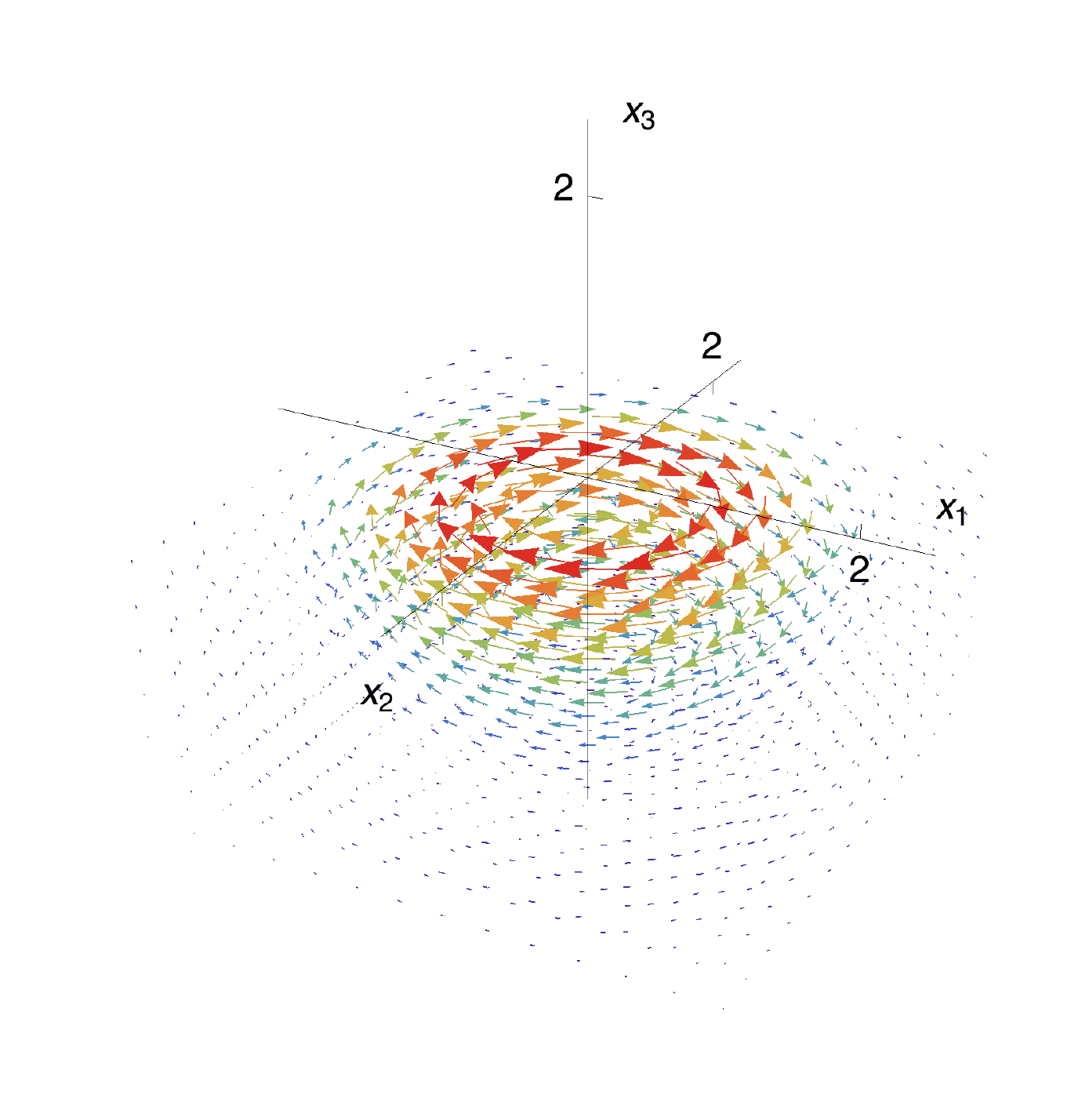}
  \includegraphics[trim={80 100 30 20},clip,width=0.32\textwidth]{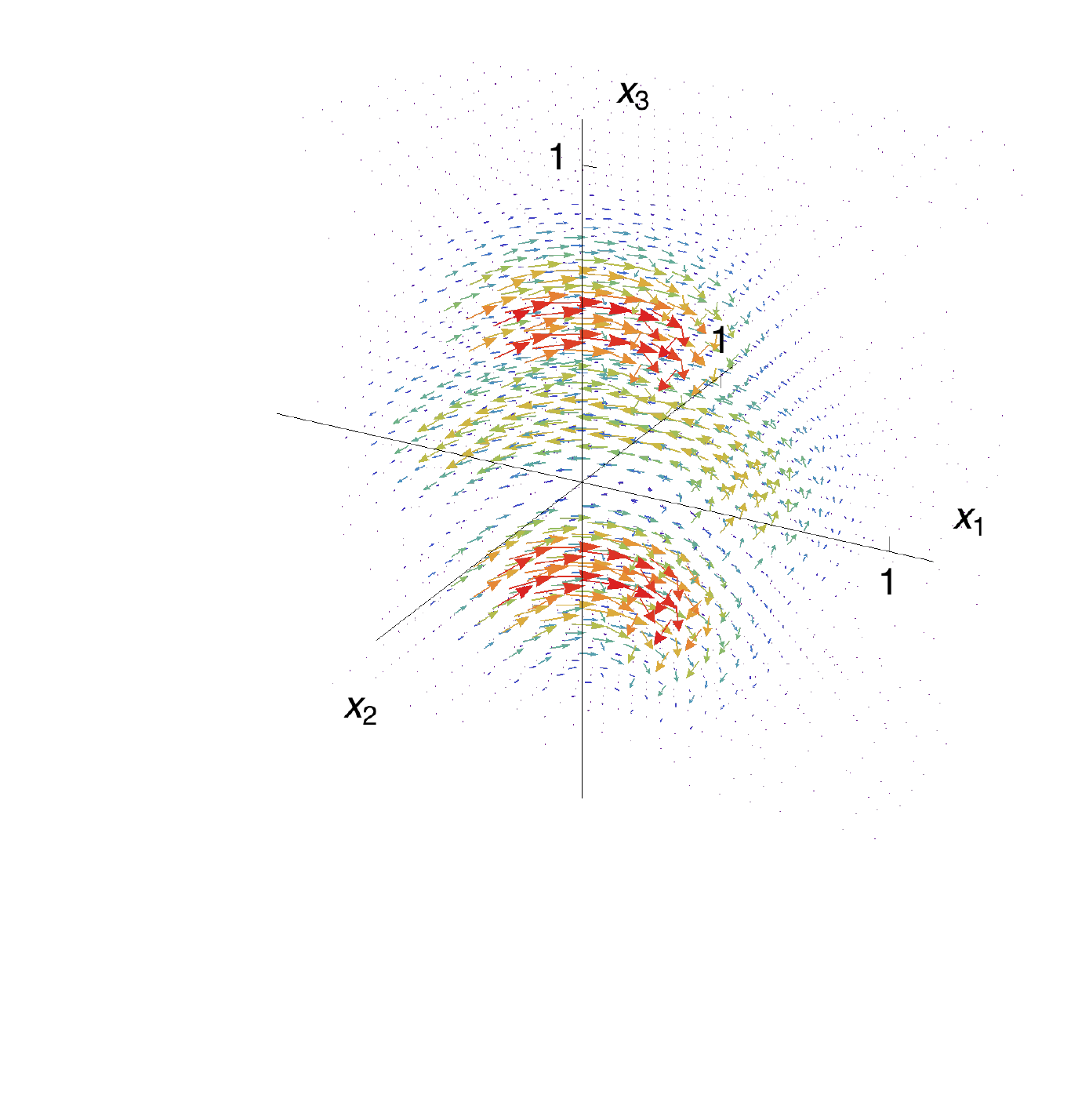}
  \includegraphics[trim={80 100 30 20},clip,width=0.32\textwidth]{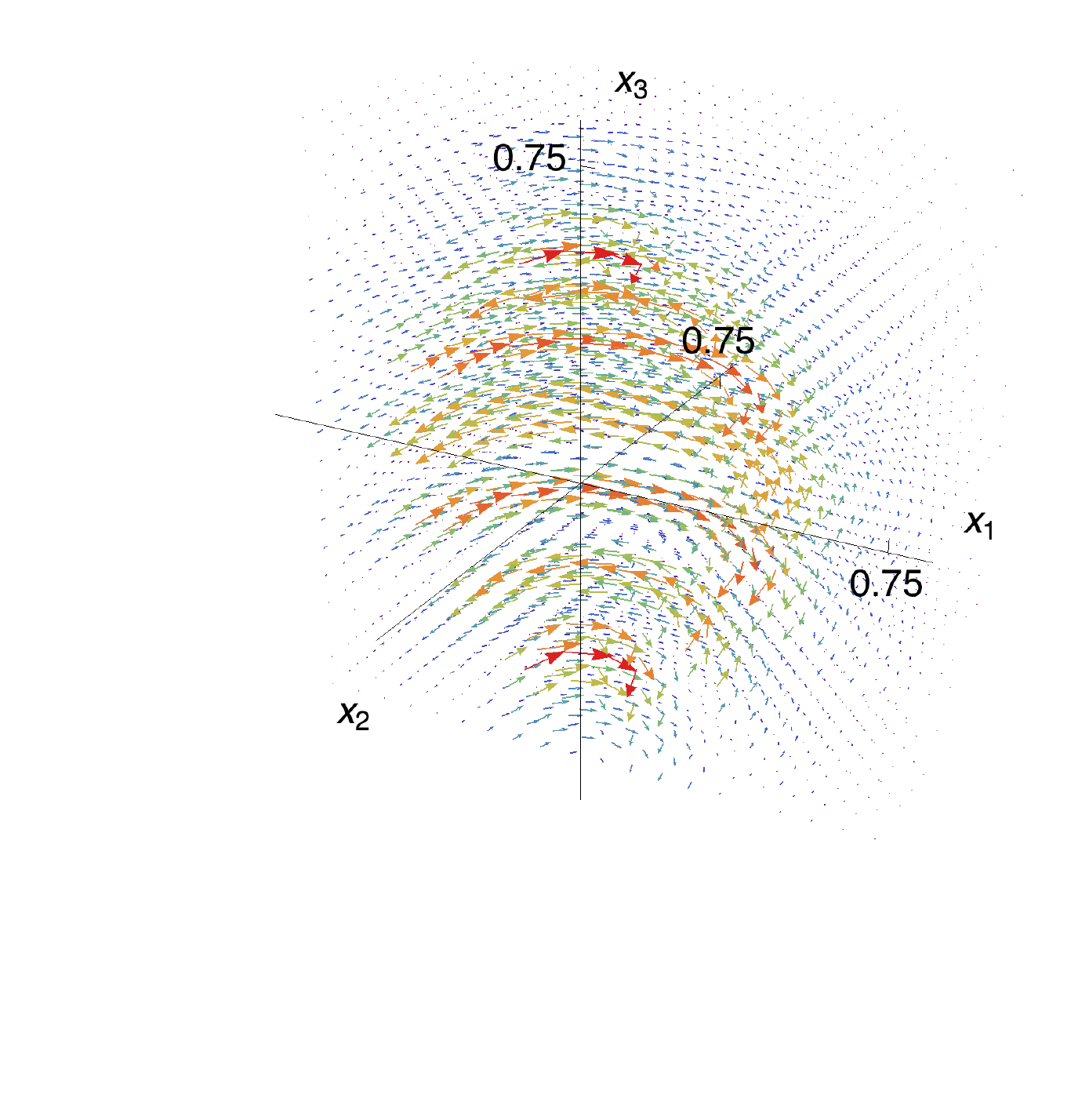}
  \caption{Divergence free wavelets in three dimensions, top row in the frequency domain and bottom row in the spatial one. The left function is isotropic around the $x_3$ axis and the other ones are directional modeling a high frequency feature across the $x_1$-$x_2$ plane.}
  \label{fig:psis:3d}
\end{figure}

\subsubsection{Directional Wavelets}
\label{sec:construction:2d:directional}

The angular windows $\hat{\gamma}_j(\theta)$, which are defined through the Fourier series coefficients $\beta_m^j$, provide directional localization in the Fourier domain.
By Eq.~\ref{eq:psi:divfree:2d:space}, the directionality is preserved by the inverse Fourier transform.
In the scalar theory, such a directional localization was first introduced for steerable wavelets~\cite{Perona1991,Freeman1991,Simoncelli1995,Perona1991} to detect oriented features and later, in the work on curvelets and similar constructions~\cite{Candes1999a,Candes2005a,Labate2005,Do2005a}, it was realized that it is also critical for the optimally efficient approximation of discontinuous signals.
See Fig.~\ref{fig:wf} for some intuition.

For divergence free vector fields, our directional, divergence free wavelets yield analogous behavior than steerable wavelets and curvelets in the scalar setting.
This can be seen by computing the frame coefficient $u_s$ for a fluid vector field $\vec{u}(x)$, i.e.
\begin{subequations}
\begin{align}
    u_s = \big\langle \vec{u}(x) , \vec{\psi}_s(x) \big\rangle
  &= \Big\langle \hat{\vec{u}}(\xi) , \hat{\vec{\psi}}_s(\xi) \Big\rangle .
\end{align}
Since both $\vec{u}(x)$ and $\vec{\psi}_s(x)$ are divergence free, and hence tangential to $S_{\vert \xi \vert}^1$, they both have $\vec{e}_{\theta_{\xi}}$ as vector component.
Using that $\vec{e}_{\theta_{\xi}}$ is a unit vector one obtains
\begin{align}
  \label{eq:2d:approx:curvelets:2}
  u_s
  &= \big\langle \hat{u}(\xi) \, \vec{e}_{\theta_{\xi}} \, , \, -i \, \hat{\gamma}_s(\theta_{\xi}) \, \hat{h}(2^{-j_s} \vert \xi \vert) \, \vec{e}_{\theta_{\xi}} \big\rangle
  \\[3pt]
  \label{eq:2d:approx:curvelets:3}
  &= \big\langle \hat{u}(\xi) \, , \, -i \, \hat{\gamma}_s(\theta_{\xi}) \, \hat{h}(2^{-j_s} \vert \xi \vert)  \big\rangle
\end{align}
\end{subequations}
with the inner product in the second line again being those for scalar functions.
Thus, with suitable angular window functions, e.g. ones corresponding to steerable filters or wavelets~\cite{Perona1991,Freeman1991,Simoncelli1995,Unser2013} (for which, at least empirically, the angular sensitivity is well established) our directional, divergence free wavelets detect oriented features in a flow.
For wavelets aligned with, e.g. the wake behind an object, the frame coefficients $u_s$ will thus have a larger magnitude than for unaligned ones.
A numerical demonstration is shown in Fig.~\ref{fig:ball}.

\begin{figure}
  \includegraphics[width=0.8\textwidth]{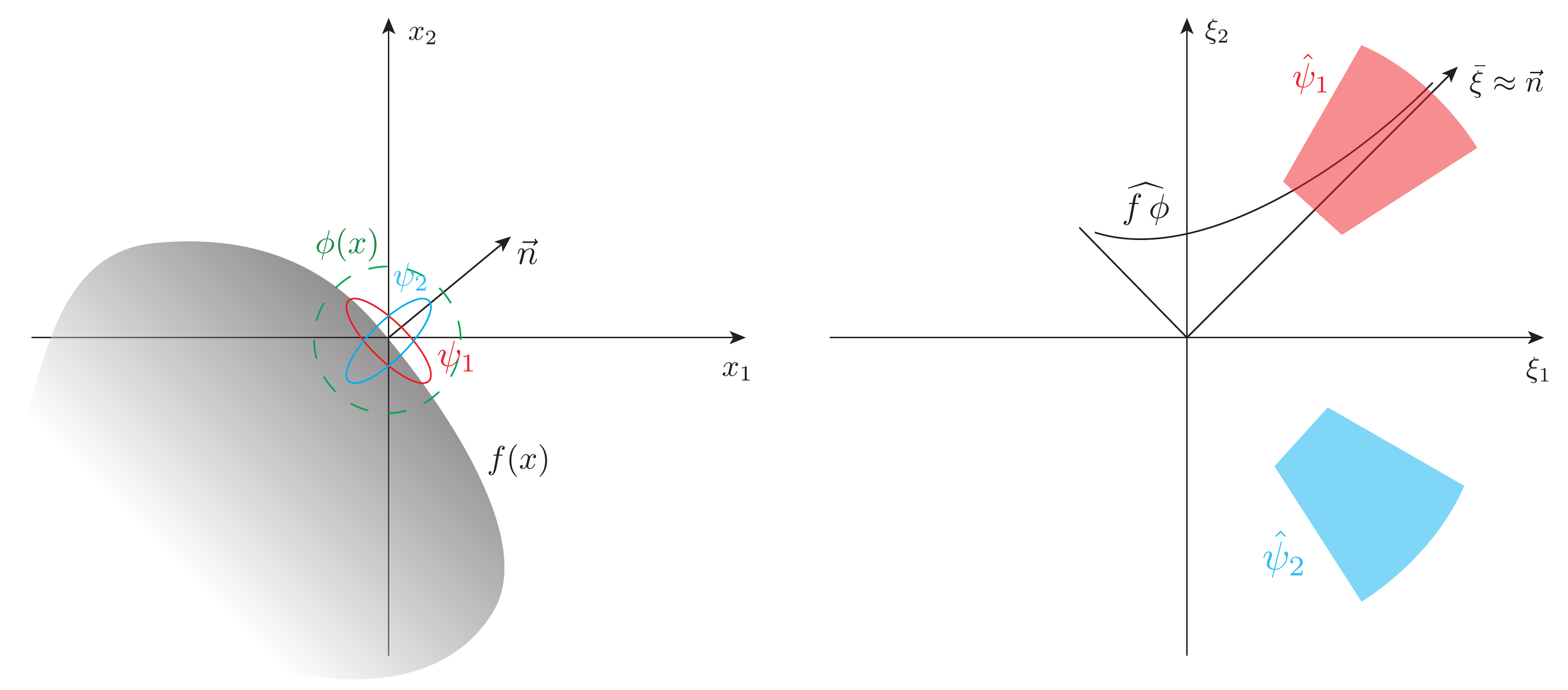}
  \caption{\emph{Left:} Discontinuous, scalar signal $f(x)$ with normal pointing towards $(1,1)$ around the origin. The ellipses indicate directional wavelets aligned (red) and unaligned (cyan) with the discontinuity. \emph{Right:} Conceptual view of the localized Fourier transform $\widehat{f \phi}$. It decays slowly only along the directions $\bar{\xi} \approx \vec{n}$. Hence, a wavelet $\hat{\psi}(\xi)$ supported sufficiently far from the origin will overlap $\widehat{f \phi}$ only when its orientation is aligned with the direction of slow decay, and hence with the normal of the discontinuity.
    We refer to~\cite[Ch. XIV]{Stein1993} and~\cite[Vol. 1, Ch. VIII]{Hoermander2004} for precise statements about the local Fourier transform of discontinuous signals and to~\cite{Candes2005a} for their approximation using wavelet-like functions.}
  \label{fig:wf}
\end{figure}

Moreover, since $\hat{\gamma}_j(\theta_{\xi}) \, \hat{h}(2^{-j} \vert \xi \vert)$ is a scalar, curvelet-like function
existing results for curvelets~\cite[Thm. 1.2, Thm. 1.3]{Candes2004} imply the following.

\begin{proposition}
  \label{prop:curvelet_approx}
  Let $\vec{u}(x) \in L_2^{\mathrm{div}}(\mathbb{R}^{2,2})$ be a $C^2$-smooth divergence free vector field away from $C^2$ discontinuities with $\mathcal{F}^{-1}(\hat{u}) \in \mathcal{E}^2(A)$~\cite[Def. 1]{Candes2004}, which we write as $\vec{u} \in \mathcal{E}_{\mathrm{div}}^2(A)$.
  When the windows $\hat{\gamma}_j(\theta_{\xi})$ and $\hat{h}(\vert \xi \vert)$ satisfy the admissibility conditions of second generation curvelets~\cite[Sec. 2]{Candes2004}, then the $n$-largest coefficient $\vert u_s \vert_{n}$ in the coefficient sequence $(\vert u_s \vert)_{n}$ satisfies
  \begin{align}
    \sup_{\vec{u} \in \mathcal{E}_{\mathrm{div}}^2(A)} \vert u_s \vert_{n} \leq C \cdot n^{-3/2} \, (\log{n})^{3/2} .
  \end{align}
\end{proposition}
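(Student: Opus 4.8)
The plan is to remove the vector structure from the frame coefficients $u_s$, so that the problem becomes an ordinary scalar curvelet sparsity estimate, and then to invoke the rearrangement bound of Cand\`es and Donoho verbatim. The essential reduction is already carried out in Eq.~\ref{eq:2d:approx:curvelets:2}--\ref{eq:2d:approx:curvelets:3}: since both $\vec{u}(x)$ and the frame function $\vec{\psi}_s(x)$ are divergence free, their Fourier transforms are tangential to $S_{|\xi|}^1$ and share the unit vector $\vec{e}_{\theta_{\xi}}$, so that the inner product collapses to the scalar pairing
\begin{align*}
  u_s = \big\langle \hat{u}(\xi) \,,\, -i\,\hat{\gamma}_s(\theta_{\xi})\,\hat{h}(2^{-j_s}|\xi|) \big\rangle ,
\end{align*}
where $\hat{u}$ is the scalar magnitude defined by $\hat{\vec{u}}(\xi) = \hat{u}(\xi)\,\vec{e}_{\theta_{\xi}}$. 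From here the entire argument lives in the scalar setting.

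First I would identify the scalar object against which $\hat{u}$ is tested. Pulling the unimodular constant $-i$ out of the second slot leaves $|u_s| = |\langle \hat{u},\,\hat{\gamma}_s(\theta_{\xi})\,\hat{h}(2^{-j_s}|\xi|)\rangle|$, so the phase factor never affects the coefficient magnitudes. Under the admissibility hypothesis on $\hat{\gamma}_j(\theta_{\xi})$ and $\hat{h}(|\xi|)$, the family $\hat{\gamma}_s(\theta_{\xi})\,\hat{h}(2^{-j_s}|\xi|)$ — rotated, scaled and translated exactly as in Proposition~\ref{prop:polarlet:2d:parseval} — is by construction the Fourier transform of a system of second generation curvelets $\psi_s^c(x)$. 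Applying Parseval once more gives $|u_s| = |\langle u^{\mathrm{scalar}},\,\psi_s^c\rangle|$ with $u^{\mathrm{scalar}} = \mathcal{F}^{-1}(\hat{u})$, that is, the $u_s$ are, up to a constant unimodular factor, precisely the scalar curvelet coefficients of the magnitude field $u^{\mathrm{scalar}}$.

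Next I would feed this into the cartoon hypothesis. By definition $\vec{u}\in\mathcal{E}_{\mathrm{div}}^2(A)$ means $u^{\mathrm{scalar}} = \mathcal{F}^{-1}(\hat{u})\in\mathcal{E}^2(A)$, so $u^{\mathrm{scalar}}$ is a function in the $C^2$-away-from-$C^2$-edges class of~\cite[Def.~1]{Candes2004}. I would then invoke~\cite[Thm.~1.2, Thm.~1.3]{Candes2004}, which assert that for every $f\in\mathcal{E}^2(A)$ the decreasing rearrangement of its curvelet coefficients obeys the bound $|c|_{(n)} \le C\,n^{-3/2}(\log n)^{3/2}$, uniformly over the class. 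Since $(|u_s|)_n$ is exactly such a rearranged sequence, the asserted estimate for $\sup_{\vec{u}\in\mathcal{E}_{\mathrm{div}}^2(A)} |u_s|_n$ follows immediately, with the same constant $C$.

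The hard part will be the verification implicit in the second paragraph: one must confirm that the scalar windows, combined with the translation grid $2^{-j}k$ and the $M_j$ rotations of Proposition~\ref{prop:polarlet:2d:parseval}, genuinely realize the parabolic tiling that underlies the Cand\`es--Donoho estimate, namely that the angular localization narrows like $2^{-j/2}$ (so $M_j\sim 2^{j/2}$) and that the effective spatial sampling is consistent with the anisotropic curvelet grid rather than the isotropic one. This matching of normalizations is exactly what the phrase \emph{admissibility conditions of second generation curvelets} in the hypothesis is intended to supply, so the genuine technical content reduces to checking that our angular-radial windows can be chosen to meet those conditions; once that is granted, everything else is the scalar reduction above and a direct citation.
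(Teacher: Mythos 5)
Your proposal is correct and takes essentially the same route as the paper: the divergence-free structure lets the shared tangential component $\vec{e}_{\theta_{\xi}}$ collapse $u_s$ into the scalar curvelet coefficients of $\mathcal{F}^{-1}(\hat{u}) \in \mathcal{E}^2(A)$ (exactly the reduction in Eqs.~\ref{eq:2d:approx:curvelets:2}--\ref{eq:2d:approx:curvelets:3}), after which the rearrangement bound of~\cite[Thm.~1.2, Thm.~1.3]{Candes2004} is invoked verbatim. Your closing caveat about the parabolic tiling is also where the paper places the weight, absorbing that verification into the hypothesis that $\hat{\gamma}_j(\theta_{\xi})$ and $\hat{h}(\vert\xi\vert)$ satisfy the second generation curvelet admissibility conditions.
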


Critical for the above result is that the radial and angular windows satisfy a parabolic scaling law so that the size of the wavelets in the spatial domain is approximately $2^{-j/2} \times 2^{-j}$ and they can be defined on a grid with a corresponding anisotropic spacing.
In the  frequency domain, the angular windows thereby get narrower as $j$ increases and they ``zoom in'' on a direction of slow decay.
Next to the sparsity results in Proposition~\ref{prop:curvelet_approx}, this implies a stronger notion of angular selectivity than provided by classical steerable filters and wavelets where the support of the angular window is typically independent of the radial level~\cite{Unser2013}.

Interestingly, in the vector-valued case one has an additional degree of freedom not present in the scalar setting.
For real-valued signals, the Fourier transform is symmetric around the origin and hence $\vec{e}_{\theta_{\xi}}$ can point either in the same or opposite directions in the two half-spaces, see the last two columns in Fig.~\ref{fig:psis:2d}, top.
The two choices yield starkly different spatial wavelets $\vec{\psi}_s(x)$: when $\vec{e}_{\theta_{\xi}}$ points in opposite directions one obtains a shear-like behavior while a consistent direction of $\vec{e}_{\theta_{\xi}}$ yields a localized uni-directional streaming-like flow, see again the two rightmost columns in Fig.~\ref{fig:psis:2d}.
Directional wavelets corresponding to the two behaviors are conveniently modeled using Fourier series coefficients $\beta_m$ that are nonzero only for even $m$, which yields shear-like flows, or nonzero only for odd $m$, for streaming-like flows, see Fig.~\ref{fig:beta_m}.

\subsection{Divergence free polar wavelets in 3D}
\label{sec:construction:3d}

To carry the foregoing construction over to three dimensions, we need an equivalent of the tangent vector $\vec{e}_{\theta_{\xi}}$, which was the crucial ingredient ensuring divergence freedom in $\mathbb{R}_x^2$.
By the Hedgehog theorem, however, any smoothly changing basis tangential to the sphere will become singular at some point on $S^2$.
We escape the theorem with the following construction.

\begin{proposition}
  Let $\vec{\tau}_a$ be the tangent vector field of $S^2$ induced by the longitudal geographic coordinate $\phi_a$ with respect to the $a^{\textrm{th}}$ axis, i.e. $\vec{\tau}_a = \partial / \partial \phi_a$.
  Then for every $\omega \in S^2$ the set
  \begin{align}
    H(\omega) = \big\{ \vec{\tau}_1(\omega) , \, \vec{\tau}_2(\omega) , \, \vec{\tau}_3(\omega) \big\}
  \end{align}
  forms a Parseval tight frame for $T_{\omega} S^2$.
  We will refer to $H$ as the Hedgehog frame.
\end{proposition}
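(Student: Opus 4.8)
The plan is to reduce the statement to a single linear-algebra identity for the frame operator $S(\omega) = \sum_{a=1}^3 \vec{\tau}_a(\omega)\,\vec{\tau}_a(\omega)^T$, viewed as a symmetric operator on $\mathbb{R}^3$, and to show that it equals the orthogonal projector onto $T_{\omega} S^2 = \omega^\perp$. Recall that $\{\vec{\tau}_1,\vec{\tau}_2,\vec{\tau}_3\}$ is a Parseval tight frame for $T_{\omega} S^2$ exactly when $S(\omega)$ restricts to the identity on that subspace; since each $\vec{\tau}_a(\omega)$ will turn out to be tangent, $S(\omega)$ automatically annihilates $\omega$, and it suffices to prove $S(\omega) = I - \omega\omega^T$.

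First I would make the generators explicit. The longitude $\phi_a$ about the $a^{\textrm{th}}$ axis parametrizes rotation about that axis, so its coordinate vector field is the infinitesimal rotation generator $\vec{\tau}_a(\omega) = \partial\omega/\partial\phi_a = e_a \times \omega$, where $e_a$ is the $a^{\textrm{th}}$ standard basis vector; one checks this directly in spherical coordinates adapted to the $a^{\textrm{th}}$ axis. In particular each $\vec{\tau}_a(\omega) \perp \omega$, confirming $\vec{\tau}_a(\omega) \in T_{\omega} S^2$.

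The computation of $S(\omega)$ then follows by writing the cross product as a matrix. Let $\Omega$ be the skew-symmetric matrix defined by $\Omega v = \omega \times v$, so that $\vec{\tau}_a = e_a \times \omega = -\Omega e_a$. Because $S(\omega)$ is quadratic in the $\vec{\tau}_a$ the sign is irrelevant, and
\[
  S(\omega) = \sum_{a=1}^3 \Omega\, e_a e_a^T\, \Omega^T = \Omega \Big( \sum_{a=1}^3 e_a e_a^T \Big) \Omega^T = \Omega\,\Omega^T = -\Omega^2,
\]
using $\sum_a e_a e_a^T = I$ and $\Omega^T = -\Omega$. Invoking the standard double cross-product identity $\Omega^2 = \omega\omega^T - \|\omega\|^2 I$ together with $\|\omega\| = 1$ gives $S(\omega) = I - \omega\omega^T$, which is precisely the orthogonal projection onto $\omega^\perp = T_{\omega} S^2$. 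Restricted to $T_{\omega} S^2$ this is the identity, so the Parseval frame property holds at every $\omega \in S^2$.

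I do not expect a serious obstacle; the only delicate points are bookkeeping. The first is the correct identification of $\vec{\tau}_a$ with the rotation generator $e_a \times \omega$ (including the orientation and sign convention for $\phi_a$), which is where the geometric content of the statement really sits. The second is the interplay with the Hedgehog theorem mentioned just before the statement: each individual field $\vec{\tau}_a$ vanishes at the two poles $\omega = \pm e_a$ of its own coordinate system, so no single field spans $T_{\omega} S^2$ everywhere; the identity $S(\omega) = I - \omega\omega^T$ is exactly what shows that the three fields together reconstruct every tangent vector at every point, with the vanishing of one field automatically compensated by the other two.
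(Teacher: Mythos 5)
Your proposal is correct, and it actually supplies more than the paper does: the paper's entire ``proof'' consists of the remark that the proposition ``can be checked by an explicit calculation,'' a pointer to a more general statement in the cited work of Freeman et al., and the intuition that the three fields compensate each other's zeros (e.g.\ $\vec{\tau}_3$ vanishes at the poles of the $x_3$-axis, exactly where $\vec{\tau}_1,\vec{\tau}_2$ are orthonormal). Where the paper would presumably grind through spherical coordinates, you give a coordinate-free argument: identify $\vec{\tau}_a(\omega)=\partial\omega/\partial\phi_a=e_a\times\omega$ (consistent with the paper's own Cartesian expression $\vec{\tau}_3=(-\sin\theta\sin\varphi,\,\sin\theta\cos\varphi,\,0)$, so your sign convention matches), then compute the frame operator via the skew matrix $\Omega$,
\begin{align}
  S(\omega)=\sum_{a=1}^3 \vec{\tau}_a\vec{\tau}_a^T=\Omega\Omega^T=-\Omega^2=I-\omega\omega^T,
\end{align}
the orthogonal projector onto $\omega^\perp=T_\omega S^2$, which is exactly the Parseval property since each $\vec{\tau}_a$ is tangent and $S$ is symmetric with $S\omega=0$. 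Each step checks out: $\vec{\tau}_a=-\Omega e_a$, the sign washes out in the quadratic form, $\sum_a e_ae_a^T=I$, and $\Omega^2=\omega\omega^T-\|\omega\|^2 I$ is the standard double-cross-product identity. Your closing observation is also the right one: the identity $S(\omega)=I-\omega\omega^T$ is precisely the quantitative form of the paper's compensation intuition, holding uniformly even at the points where an individual $\vec{\tau}_a$ degenerates. What your route buys is a two-line, basis-free verification that makes the tightness constant manifestly $1$ at every $\omega$; what the paper's deferral buys is brevity and a reference for the general steerable-frame version.
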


The proposition can be checked by an explicit calculation and, in a more general form, can also be found in~\cite{Freeman2014}.
Intuitively, it holds because the three vectors compensate each other, e.g. $\vec{\tau}_3$ vanishes at the North pole but this is exactly where $\vec{\tau}_1$ and $\vec{\tau}_2$ form an orthonormal basis.

\begin{figure}
  \includegraphics[width=\textwidth]{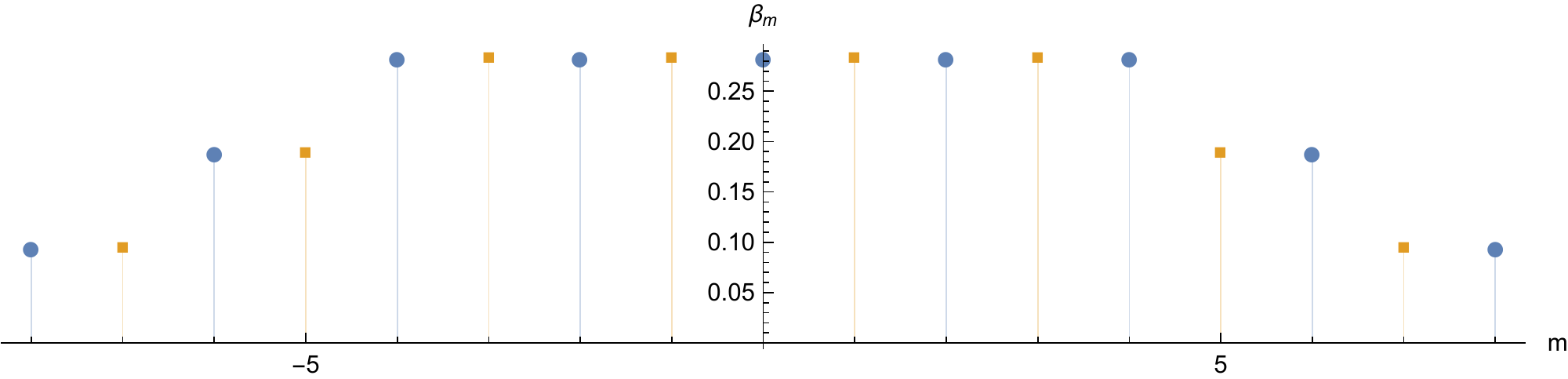}
  \caption{Examples for coefficients $\beta_m$ that define the angular localization windows $\hat{\gamma}(\theta_{\xi})$. Functions yielding a shear-like flow have $\beta_m$ that are non-zero only for even $m$ (blue) while those yielding a streaming-like flow behavior have nonzero $\beta_m$ only for odd $m$ (yellow).}
  \label{fig:beta_m}
\end{figure}

With the Hedgehog frame, we can proceed as before to construct a divergence free wavelet frame: we choose a vectorial part that is inherently tangential to the frequency sphere, namely $\vec{\tau}_a$, and obtain angular localization with a scalar window function.
Our mother wavelets are thus defined as
\begin{align}
  \label{eq:wavelets:3d:frequency}
  \hat{\vec{\psi}}_a(\xi)
  = \hat{\psi}_a(\xi) \, \vec{\tau}_a
  = -i \, \left( \sum_{l=0}^L \sum_{m=-l}^l \kappa_{lm} \, y_{lm}(\bar{\xi}) \right) \hat{h}( \vert \xi \vert) \, \vec{\tau}_a
\end{align}
with now one for each coordinate axis.
By expanding $\vec{\tau}_a$ into spherical harmonics, the spatial representation $\vec{\psi}_a(\xi)$ can be computed in closed form using a calculation analogous to those in two dimensions, see Appendix~\ref{sec:appendix:spatial:3d} for details.
It is given by
\begin{align}
  \label{eq:wavelets:3d:general}
  \vec{\psi}_3(x) =
  \frac{2}{\sqrt{3}} \sum_{l,m} & \, \kappa_{lm}
   \sum_{l_2=l-1}^{l+1} i^{l_2} \,  h_{l_2}( \vert x \vert)
  \sum_{\sigma \in \pm 1}  G_{l m;1 \sigma}^{l_2,m+\sigma} \, y_{l_2,m+\sigma}(\bar{x})
  \begin{pmatrix}
    -1
    \\[-1pt]
    i \, \sigma
    \\[-1pt]
    0
  \end{pmatrix}
\end{align}
and analogously for the other two coordinate axes.
Here the $G_{l m;1 \sigma}^{l_2,m+\sigma}$ are spherical harmonics product coefficients, see Appendix~\ref{sec:preliminaries:sh}.
We also again have that the spatial representation of an isotropic wavelet $\smash{\hat{\vec{\psi}}_a(\xi)} = \hat{h}( \vert \xi \vert) \, \vec{\tau}_a$ is isotropic in space, i.e. $\vec{\psi}_a(x) = -\sqrt{2 / \pi} \, h_1 \big(\vert x \vert\big) \, \vec{\tau}_a$, with isotropy now around the $a^{\textrm{th}}$ axis.

\subsubsection{Properties of wavelets}
As in the scalar case, under suitable admissibility conditions the just defined wavelets form a Parseval tight frame for the space $L_2^{\textrm{div}}(\mathbb{R}^{3,3})$ of divergence free vector fields in $\R^3$ with finite $L_2$-norm.

\begin{proposition}
  \label{prop:tight_frame:3d}
  Let $w_{j,t}$ be the $(L_j+1)^2$-dimensional vector formed by the rotated angular localization coefficients $\kappa_{lm}^{j,t} = \sum_{m' = -l}^l W(\lambda_t)_{l,m}^{m'} \kappa_{l,m'}$ for a localization window centered at $\lambda_t$, where $W(\lambda_t)_{l,m}^m$ is the Wigner-D matrix implementing rotation in the spherical harmonics domain, and let $G^{lm}$ be the $(L_j+1)^2 \times (L_j+1)^2$ dimensional matrix formed by the spherical harmonics product coefficients for fixed $(l,m)$.
  When the Cald{\`e}ron condition $\sum_{j \in \mathbb{Z}} \big\vert \hat{h}( 2^{-j} \vert \xi \vert ) \big\vert^2 = 1$, $\forall \xi \in \mathbb{R}^3$ is satisfied and $\delta_{l,0} \delta_{m,0} = \sum_{t=0}^{M_j} w_{j,t} \, G^{lm} \, w_{j,t}$ (where $\delta_{i,j}$ is the Kronecker delta) then any $\vec{u}(x) \in L_2^{\textrm{div}}(\mathbb{R}^{3,3})$ has the representation
  \begin{subequations}
  \begin{align}
    \label{eq:tight_frame:3d}
    \vec{u}(x) = \sum_{a=1}^3 \sum_{j \in \mathbb{Z}} \sum_{k \in \mathbb{Z}^3} \sum_{t=1}^{M_j} \big\langle \vec{u}(y) , \vec{\psi}_{j,k,t}^a(y) \big\rangle \, \vec{\psi}_{j,k,t}^a(x)
  \end{align}
  with frame functions
  \begin{align}
    \vec{\psi}_{j,k,t}^a(x) = \frac{2^{3j/2}}{(2\pi)^{3/2}} \, \vec{\psi}_a \big( R_{\lambda_t} (2^{j} x -  k ) \big) ,
  \end{align}
  \end{subequations}
  for $\vec{\psi}_a(x)$ defined in Eq.~\ref{eq:wavelets:3d:general} and $R_{\lambda_t}$ the rotation from the North pole to $\lambda_t$.

\end{proposition}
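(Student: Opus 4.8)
The plan is to mirror the structure of the proof of Proposition~\ref{prop:polarlet:2d:parseval}, reducing the vector-valued reconstruction to the scalar polar wavelet tight frame of~\cite{Ward2014} together with the Parseval property of the Hedgehog frame. First I would pass to the Fourier domain and invoke Parseval's theorem, so that the claimed identity Eq.~\ref{eq:tight_frame:3d} becomes $\hat{\vec{u}}(\xi) = \sum_{a}\sum_{j,k,t} \langle \hat{\vec{u}}, \hat{\vec{\psi}}^a_{j,k,t} \rangle \, \hat{\vec{\psi}}^a_{j,k,t}(\xi)$. Because $\vec{u}$ is divergence free, $\hat{\vec{u}}(\xi)$ is tangential to the frequency sphere $S^2_{\vert \xi \vert}$, i.e. $\hat{\vec{u}}(\xi) \in T_{\bar{\xi}} S^2$ for every $\xi$; this is the hypothesis I will exploit only at the very end.

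Next, using Eq.~\ref{eq:wavelets:3d:frequency} and the fact that each $\vec{\tau}_a$ is real and tangential, I would rewrite every vector inner product as a scalar one against the scalar window, $\langle \hat{\vec{u}}, \hat{\vec{\psi}}^a_{j,k,t} \rangle = \langle \hat{\vec{u}} \cdot \vec{\tau}_a, \hat{\psi}_{j,k,t} \rangle$, exactly as the unit-vector step does in the two-dimensional argument. The summand then factors into the scalar reconstruction kernel $\hat{\psi}_{j,k,t}(\xi)$ times the tangential polarization $\vec{\tau}_a(\bar{\xi})$, and one sees that the orientation index $t$ (with the Wigner-rotated coefficients $w_{j,t}$) serves only to localize the scalar angular window, while the axis index $a$ supplies the tangential direction.

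The two summations are then resolved separately. For each fixed axis $a$, the family $\{\hat{\psi}_{j,k,t}\}_{j,k,t}$ is a scalar polar wavelet system whose coefficients satisfy precisely the admissibility hypotheses of the proposition, namely the Cald\`eron condition on $\hat{h}$ together with the spherical-harmonics condition $\delta_{l,0}\,\delta_{m,0} = \sum_t w_{j,t}\, G^{lm}\, w_{j,t}$, which is the three-dimensional scalar tightness condition of~\cite{Ward2014}. Hence the sum over $j,k,t$ reconstructs the tangential component, $\sum_{j,k,t} \langle \hat{\vec{u}} \cdot \vec{\tau}_a, \hat{\psi}_{j,k,t} \rangle \, \hat{\psi}_{j,k,t}(\xi) = \hat{\vec{u}}(\xi) \cdot \vec{\tau}_a(\bar{\xi})$. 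Summing the resulting terms $(\hat{\vec{u}} \cdot \vec{\tau}_a)\,\vec{\tau}_a$ over $a = 1,2,3$ and invoking the Parseval property of the Hedgehog frame $H(\bar{\xi})$ on $T_{\bar{\xi}} S^2$ then gives $\sum_a (\hat{\vec{u}}(\xi) \cdot \vec{\tau}_a)\,\vec{\tau}_a = \hat{\vec{u}}(\xi)$, which is the desired identity, since $\hat{\vec{u}}(\xi)$ lies in the tangent plane.

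The step I expect to be the main obstacle is disentangling the orientation summation from the tangential summation and checking that they do not interfere. Concretely, I must confirm that the rotations $R_{\lambda_t}$ reshape only the scalar angular window, so that the scalar tightness of~\cite{Ward2014} applies with a single, $t$-independent component function $\hat{\vec{u}} \cdot \vec{\tau}_a$ for each axis, and that the Hedgehog reconstruction may be applied pointwise in $\bar{\xi}$ without generating cross terms between distinct axes $a$ or distinct orientations $t$. Unlike the two-dimensional case, where $T_{\bar{\xi}} S^1$ is one-dimensional and the reduction to $\vec{e}_{\theta}$ is immediate, here the tangent plane is two-dimensional and covered by a genuinely redundant three-vector frame, so verifying this clean separation, equivalently that the stated scalar condition is exactly what makes each per-axis scalar expansion exact while the Hedgehog Parseval identity supplies the remaining tangential completeness, is where the care lies; the remaining manipulations then parallel Proposition~\ref{prop:polarlet:2d:parseval}.
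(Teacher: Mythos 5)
Your proposal is correct and takes essentially the same approach as the paper's own proof: Parseval to pass to the Fourier domain, pointwise projection of $\hat{\vec{u}}$ onto the Hedgehog vectors so that each axis $a$ yields a scalar polar wavelet expansion of $\hat{u}_a = \hat{\vec{u}} \cdot \vec{\tau}_a$ covered by the tight-frame results of~\cite{Ward2014} under the stated Cald{\`e}ron and angular conditions, and reassembly via the Parseval tightness of the Hedgehog frame on the tangent plane. The separation you flag as the main obstacle is handled in the paper exactly as you resolve it, since each per-axis scalar reconstruction is exact on its own and no cross terms between axes or orientations arise.
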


\begin{proof}
Taking the Fourier transform of Eq.~\ref{eq:tight_frame:3d}, using Parseval's theorem, and with Eq.~\ref{eq:wavelets:3d:frequency} we obtain
\begin{subequations}
\begin{align}
  \hat{\vec{u}}(x)
  &= \sum_{a=1}^3 \sum_{j \in \mathbb{Z}} \sum_{k \in \mathbb{Z}^3} \sum_{t=1}^{M_j} \Big\langle \hat{\vec{u}}(\eta) \, , \, \hat{\psi}_{j,k,t}(\eta) \, \vec{\tau}_a(\eta) \Big\rangle \, \hat{\psi}_{j,k,t}^a(\xi) \, \vec{\tau}_a(\xi)
  \\
   &= \sum_{a=1}^3 \underbrace{\sum_{j \in \mathbb{Z}} \sum_{k \in \mathbb{Z}^3} \sum_{t=1}^{M_j} \Big\langle \hat{u}_a(\eta) \, , \, \hat{\psi}_{jkt}(\eta) \Big\rangle \, \hat{\psi}_{jkt}^a(\xi)}_{\textrm{scalar polar wavelet frame for $\hat{u}_a(\eta)$}} \, \vec{\tau}_a(\xi) .
\end{align}
\end{subequations}
Here $\hat{u}_a(\eta) = \hat{\vec{u}}(\eta) \cdot \vec{\tau}_a(\eta)$, i.e. the pointwise projection of the vector $\hat{u}_a(\eta)$ onto the Hedgehog frame vector $\vec{\tau}_a(\eta)$ at $\eta$, so that $\hat{\vec{u}}(\eta) = \sum_a \hat{u}_a(\eta) \, \vec{\tau}_a(\eta)$ by the tight frame property of the $\vec{\tau}_a$.
For the coordinate function $\hat{u}_a(\xi)$ one has a scalar polar wavelet frame expansion.
Since the proposition uses the same conditions that are required for scalar polar wavelets in $\R^3$ to form a Parseval tight frame, the result follows from the existing proofs for the scalar case.
See~\cite[Proposition 4.1]{Ward2014} for the isotropic result and~\cite[Theorem 2.4]{Ward2014} for those for the anisotropic case.
\end{proof}

The three dimensional divergence free wavelets have the same useful properties we already observed in two dimensions, in particular they are divergence free in an ideal, analytic sense, they have closed form expressions in frequency and space, a multi-resolution structure and fast transforms.

\begin{figure}[t]
  \includegraphics[width=0.1725\textwidth]{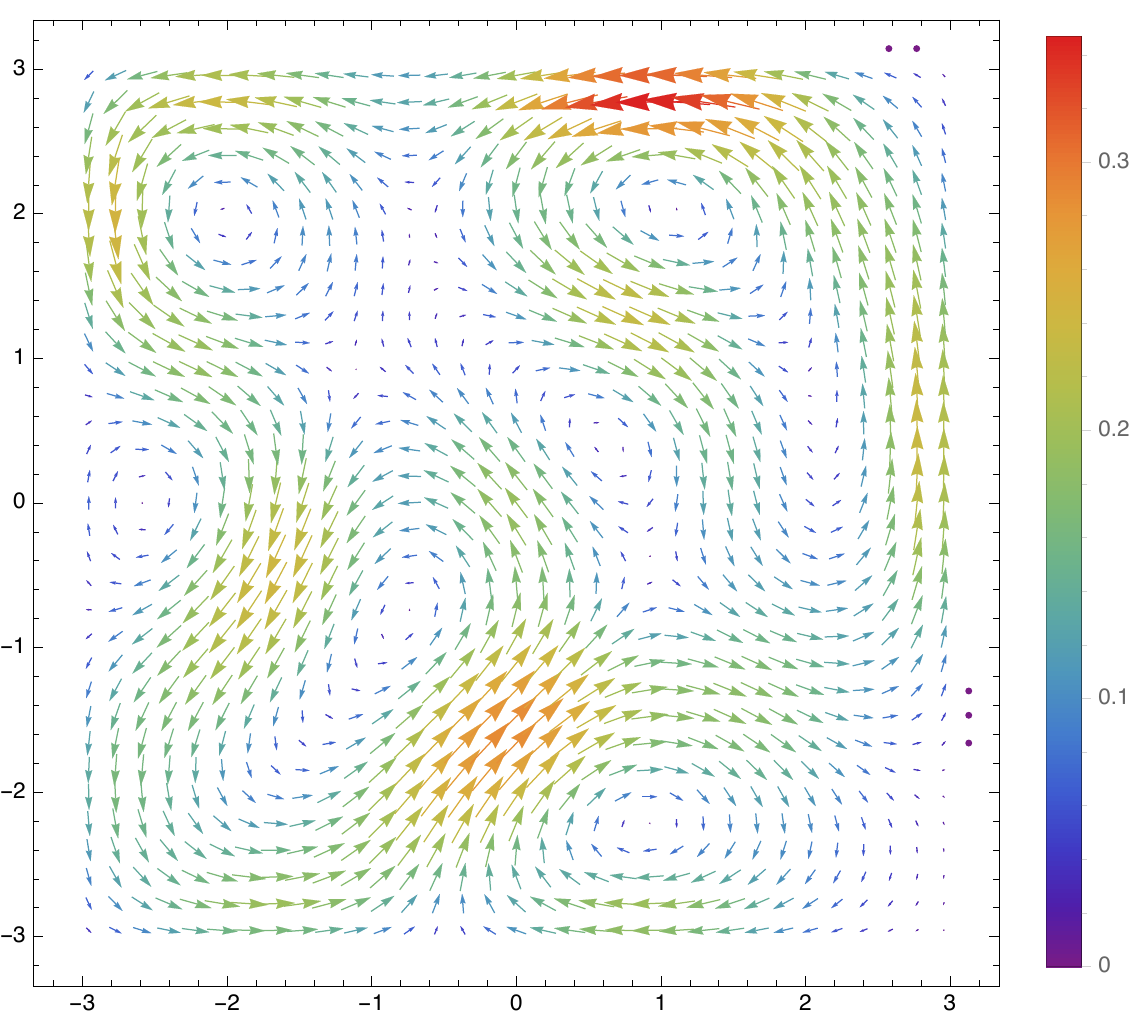}
  \includegraphics[width=0.155\textwidth]{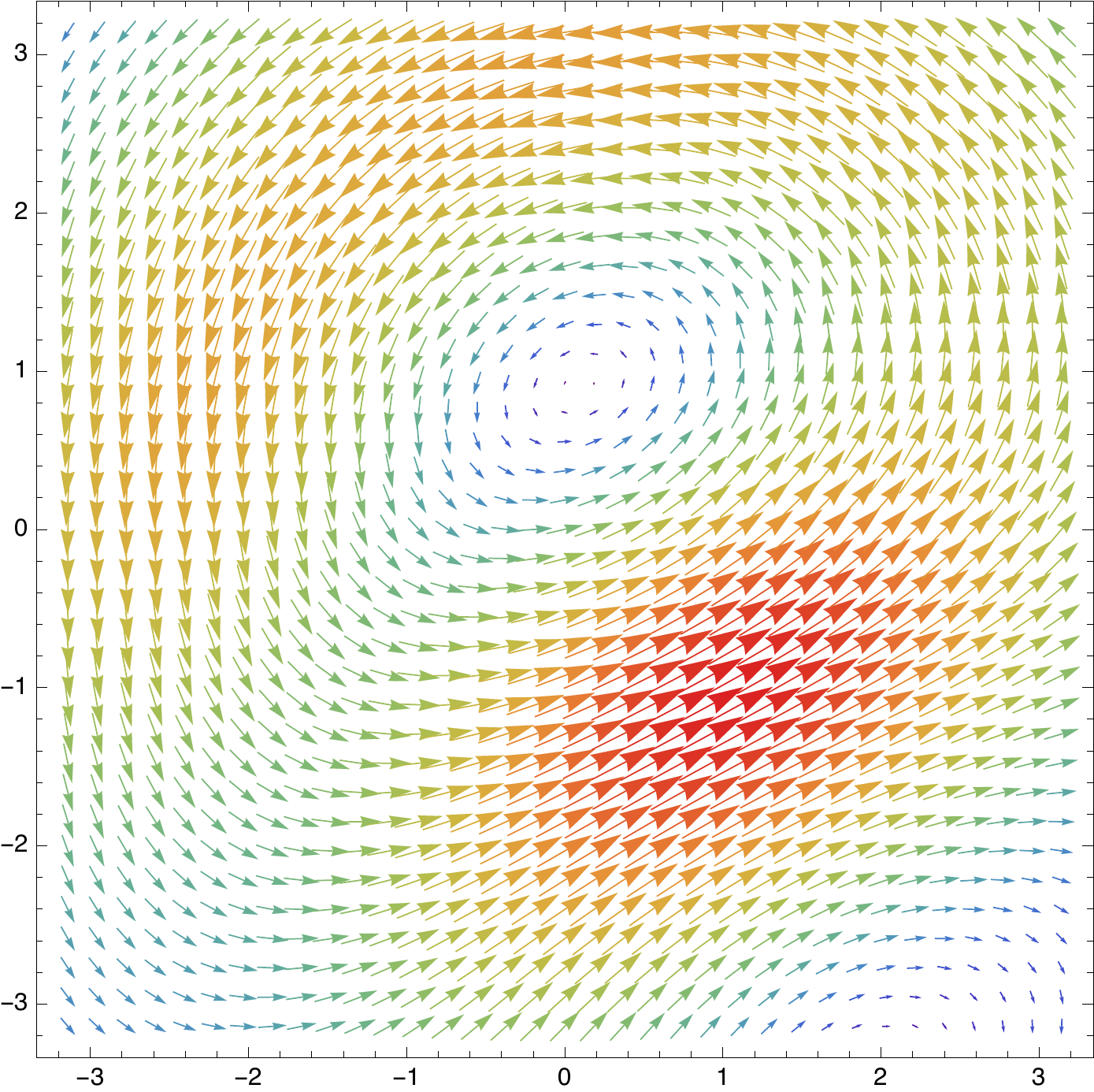}
  \includegraphics[width=0.155\textwidth]{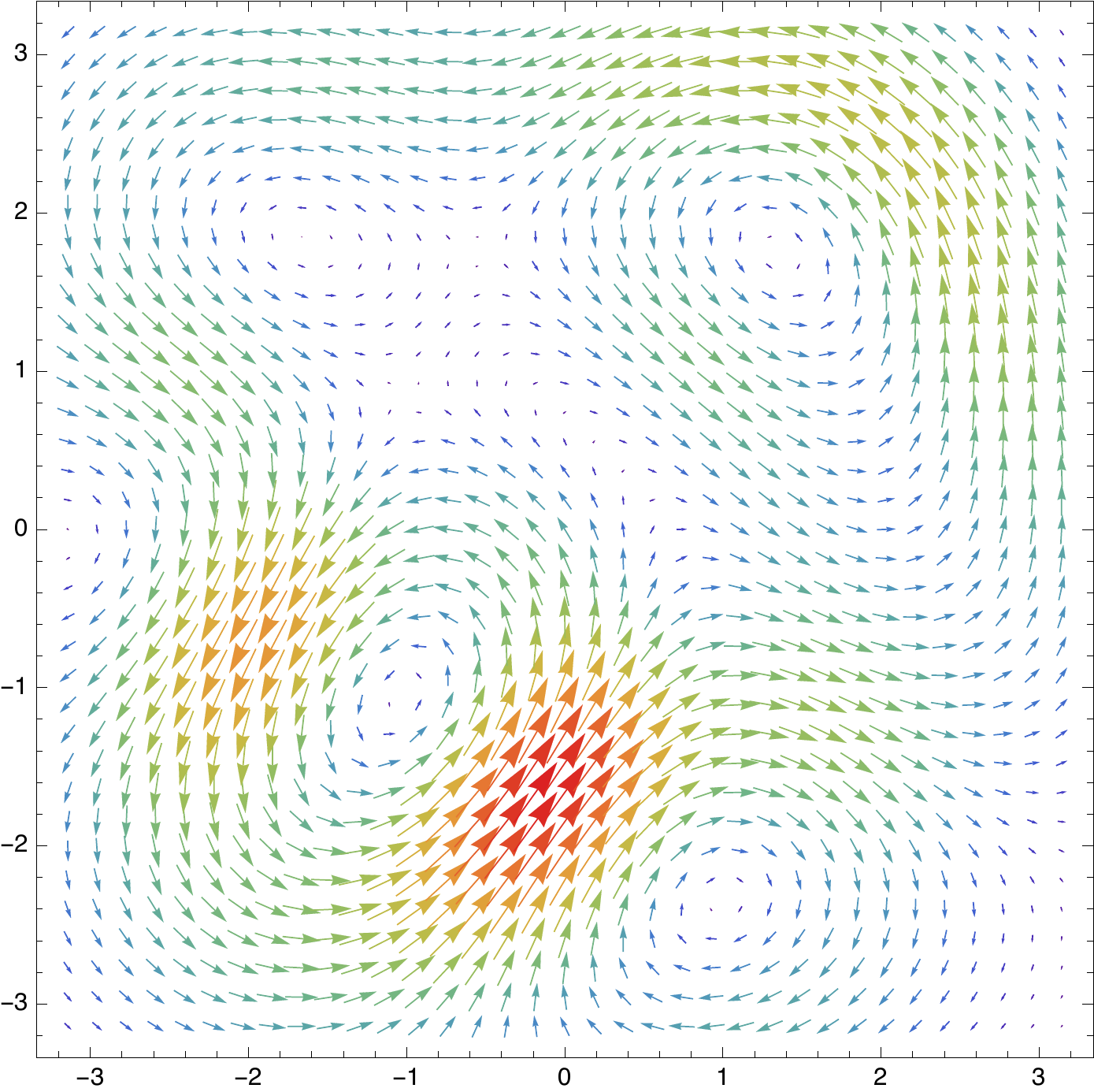}
  \includegraphics[width=0.155\textwidth]{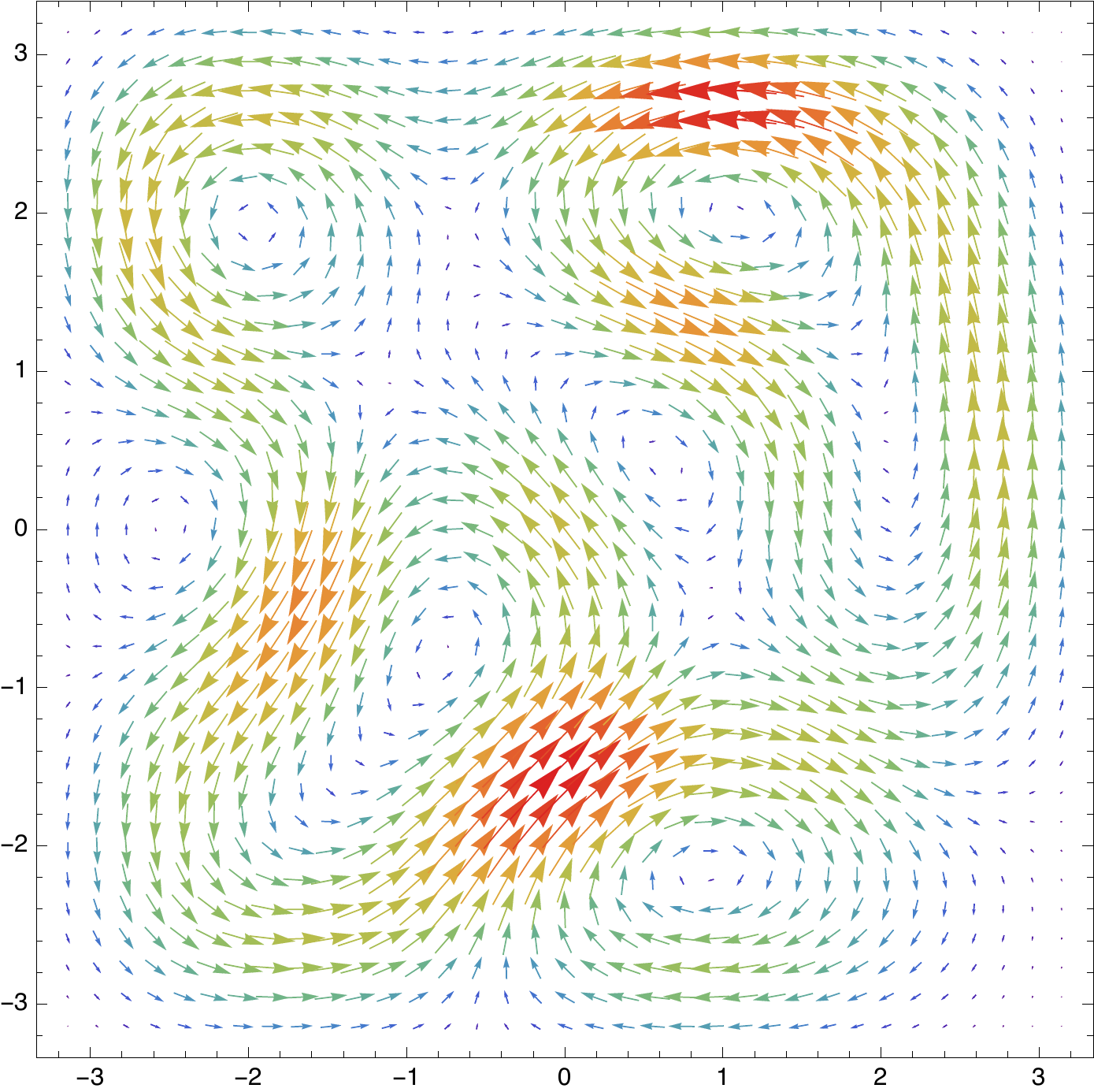}
  \includegraphics[width=0.155\textwidth]{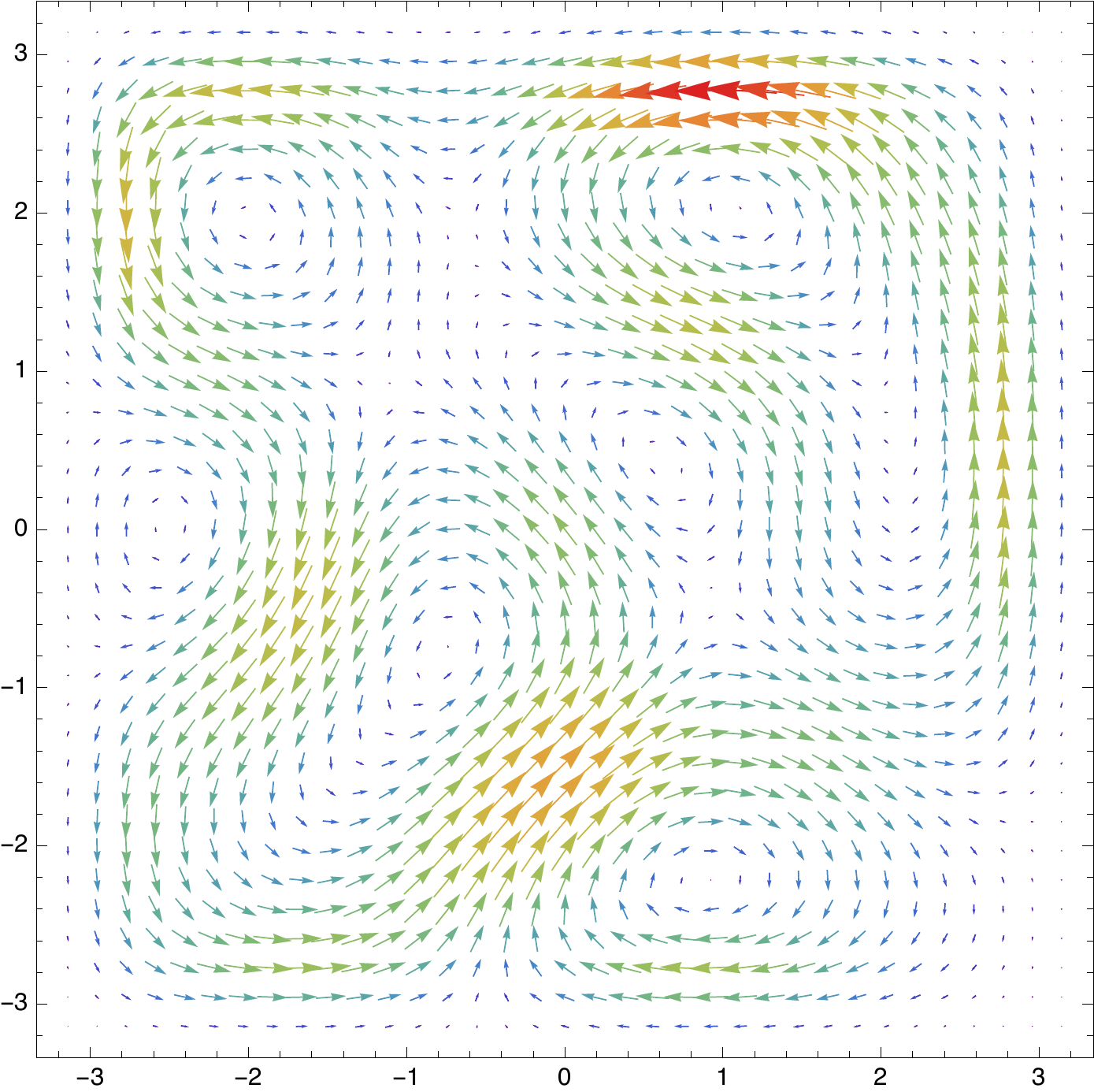}
  \includegraphics[width=0.155\textwidth]{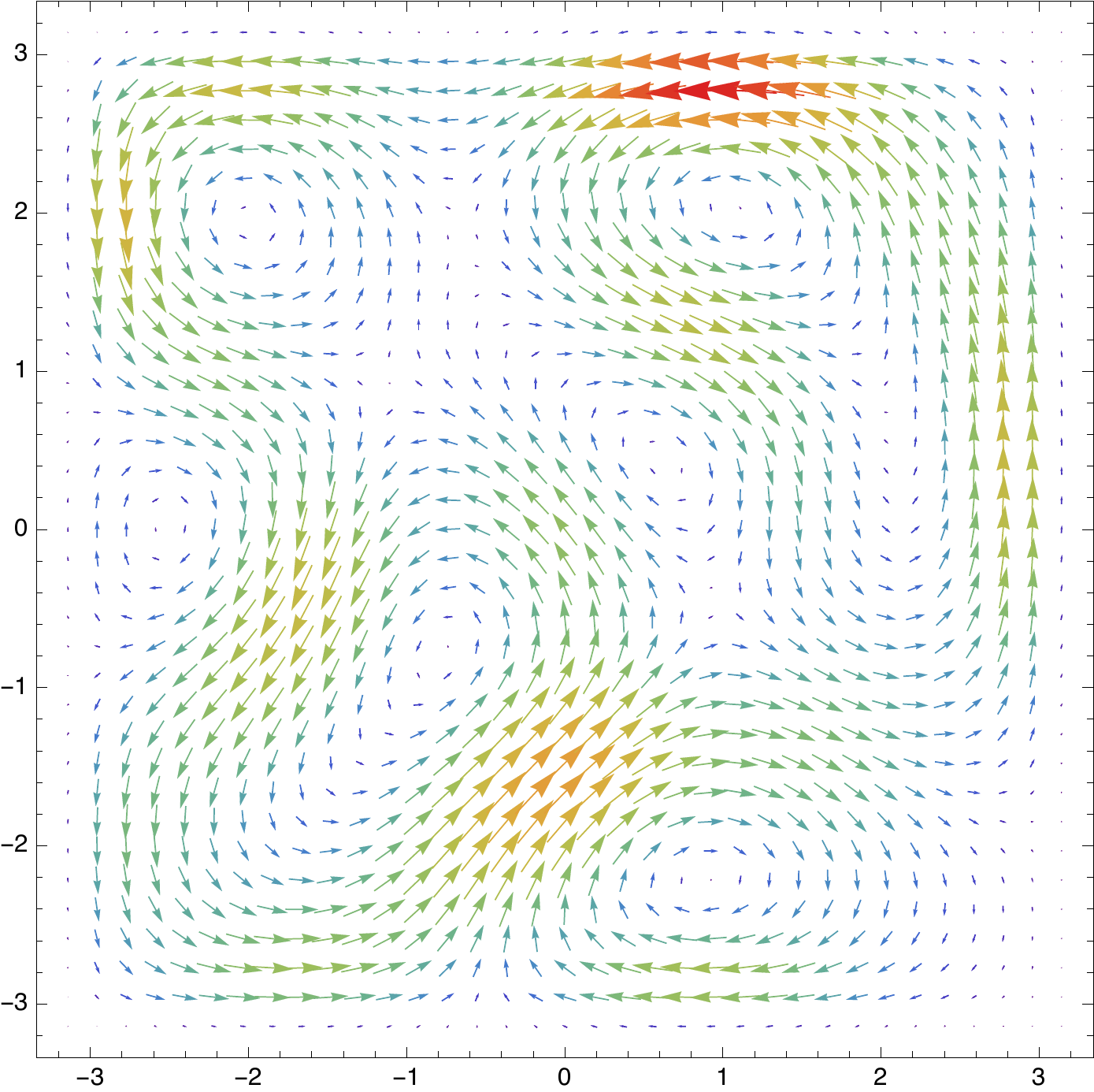}
  \\
  \includegraphics[width=0.1725\textwidth]{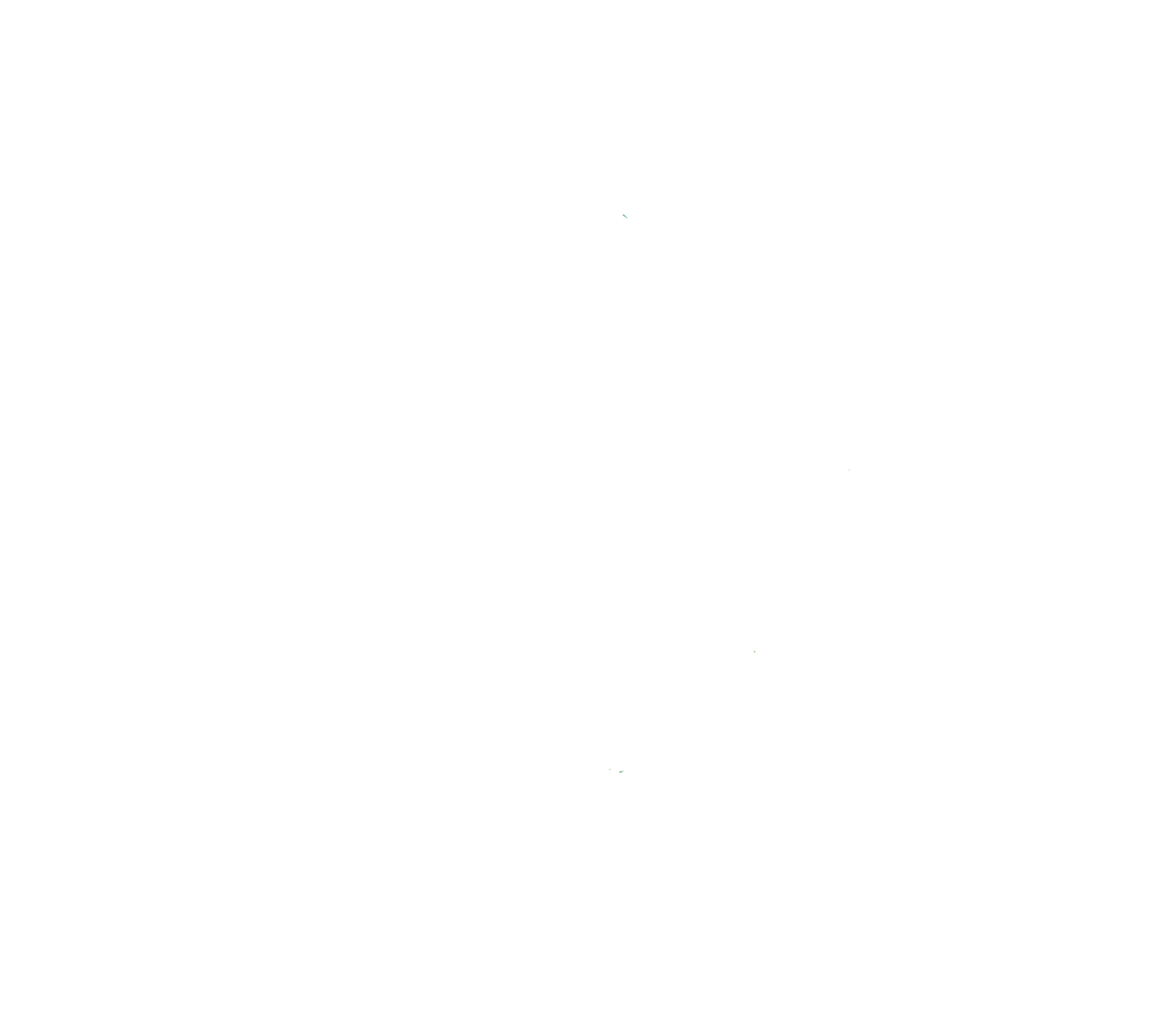}
  \includegraphics[width=0.155\textwidth]{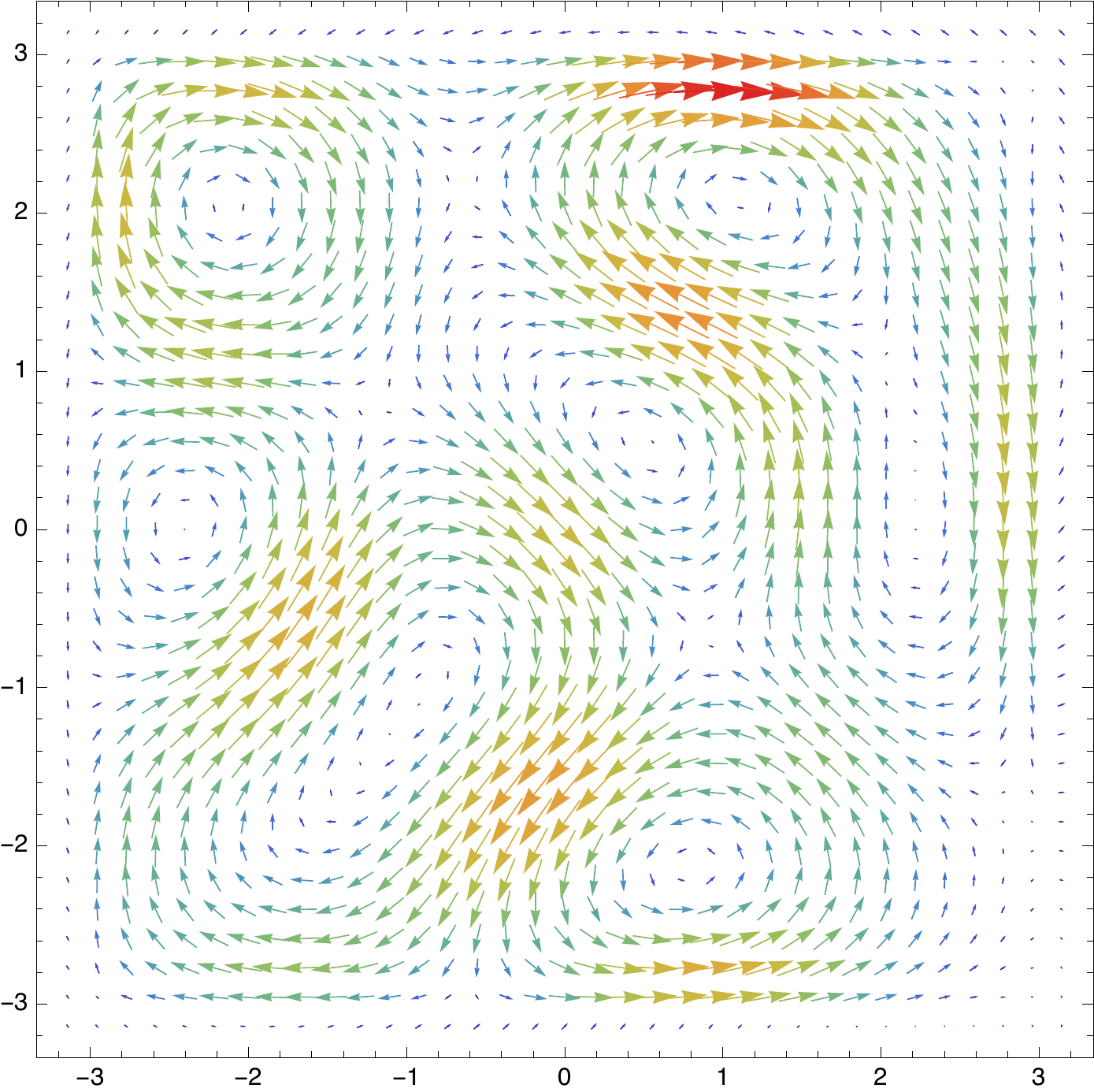}
  \includegraphics[width=0.155\textwidth]{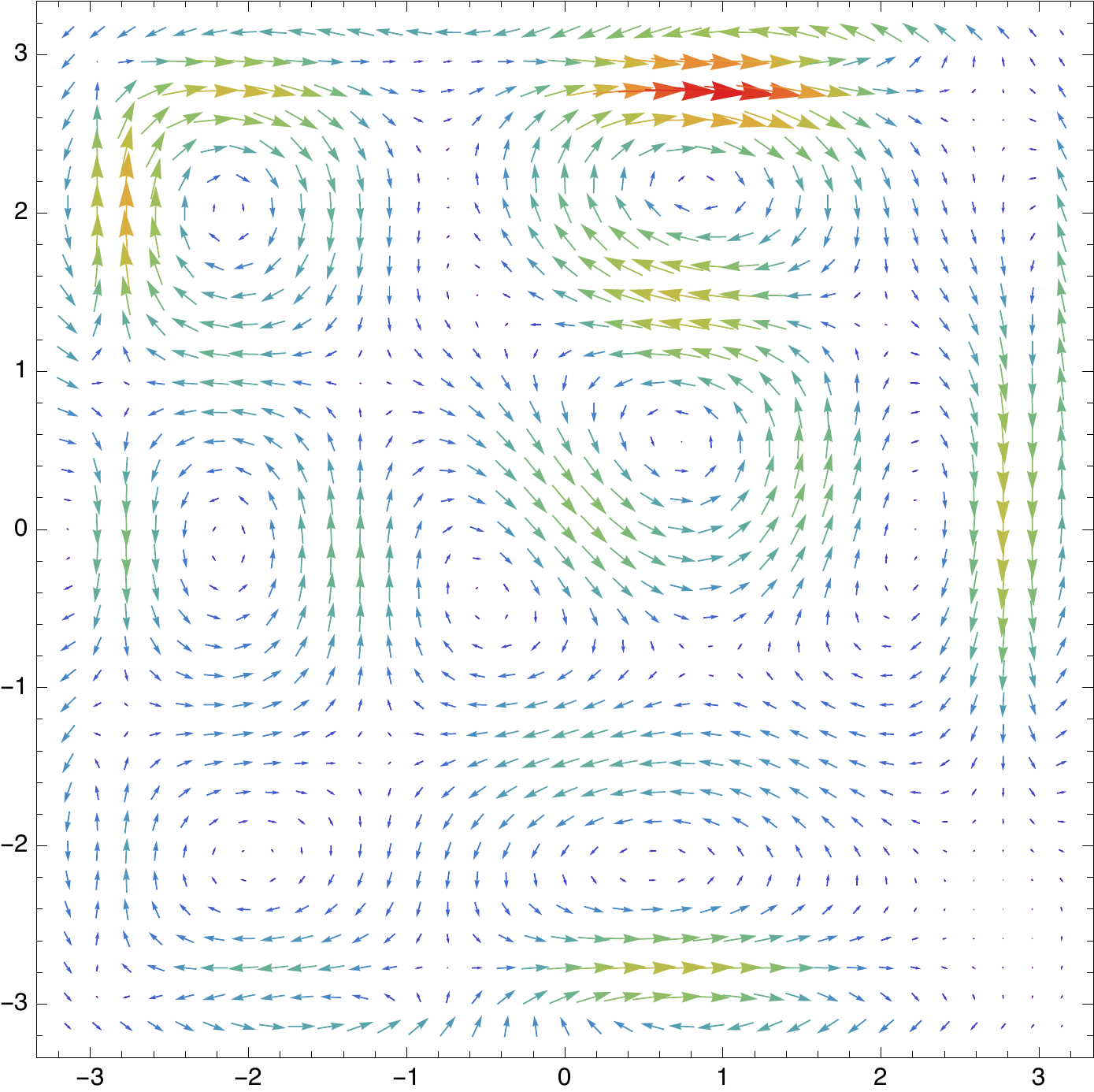}
  \includegraphics[width=0.155\textwidth]{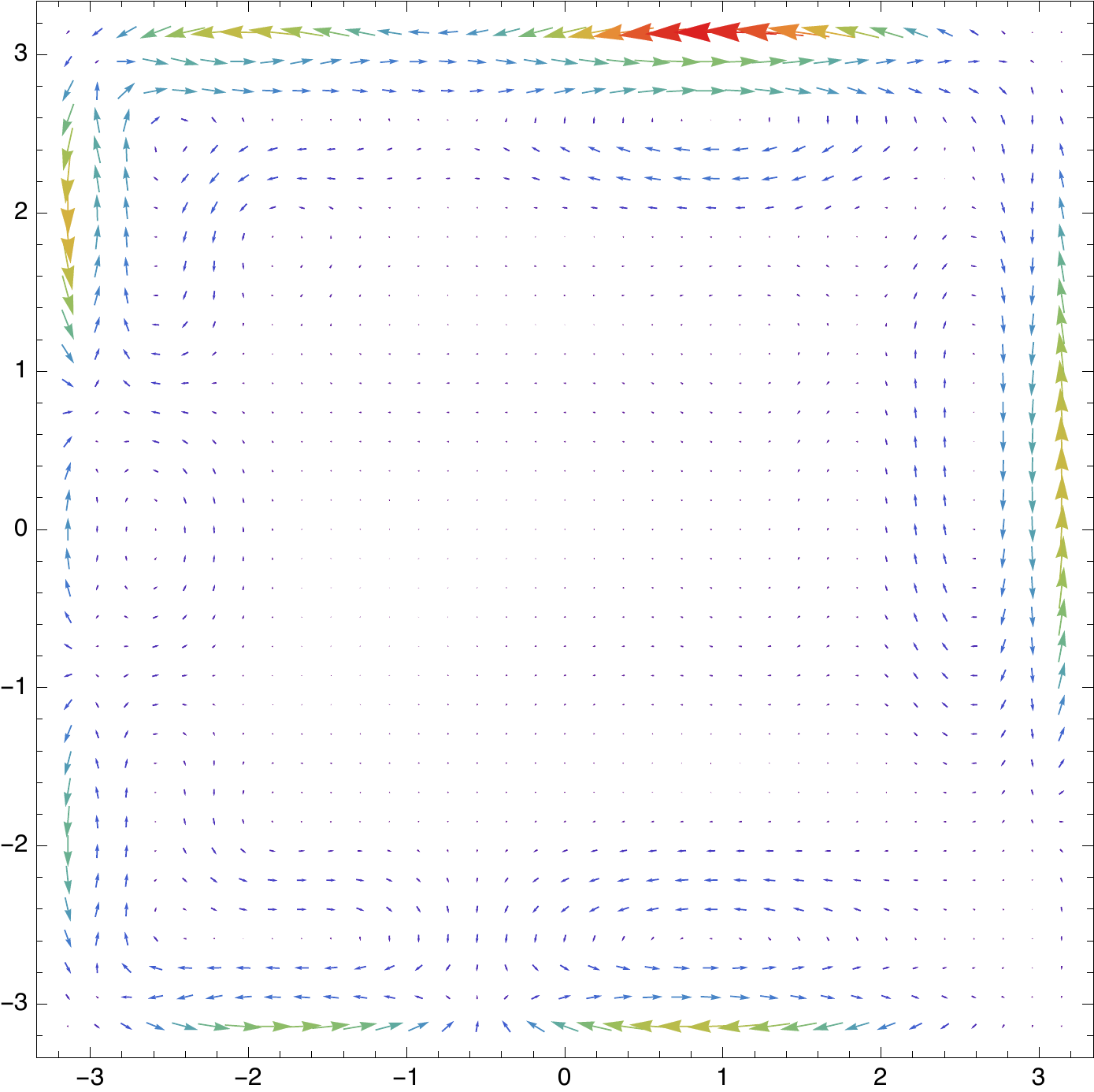}
  \includegraphics[width=0.155\textwidth]{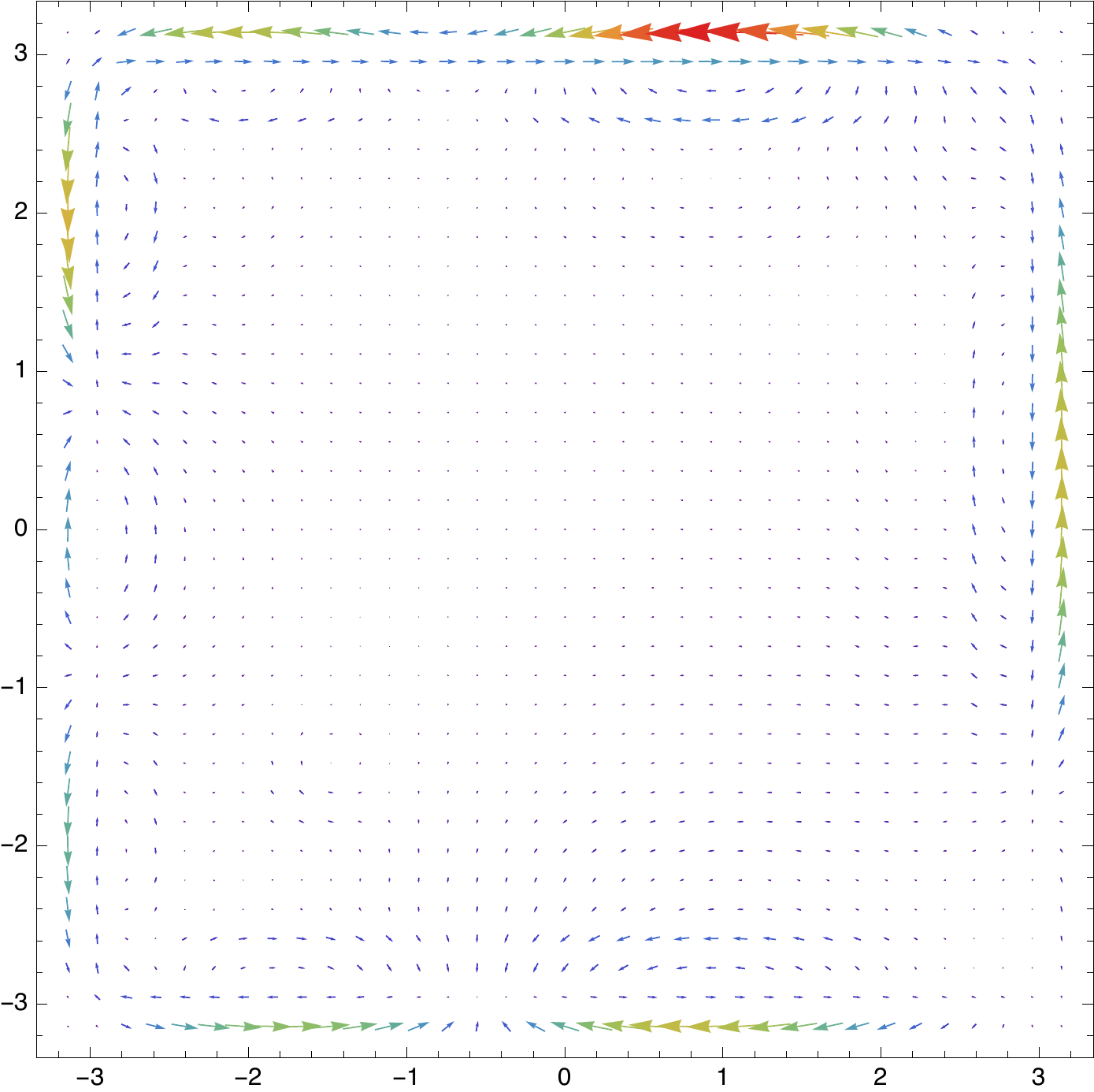}
  \includegraphics[width=0.155\textwidth]{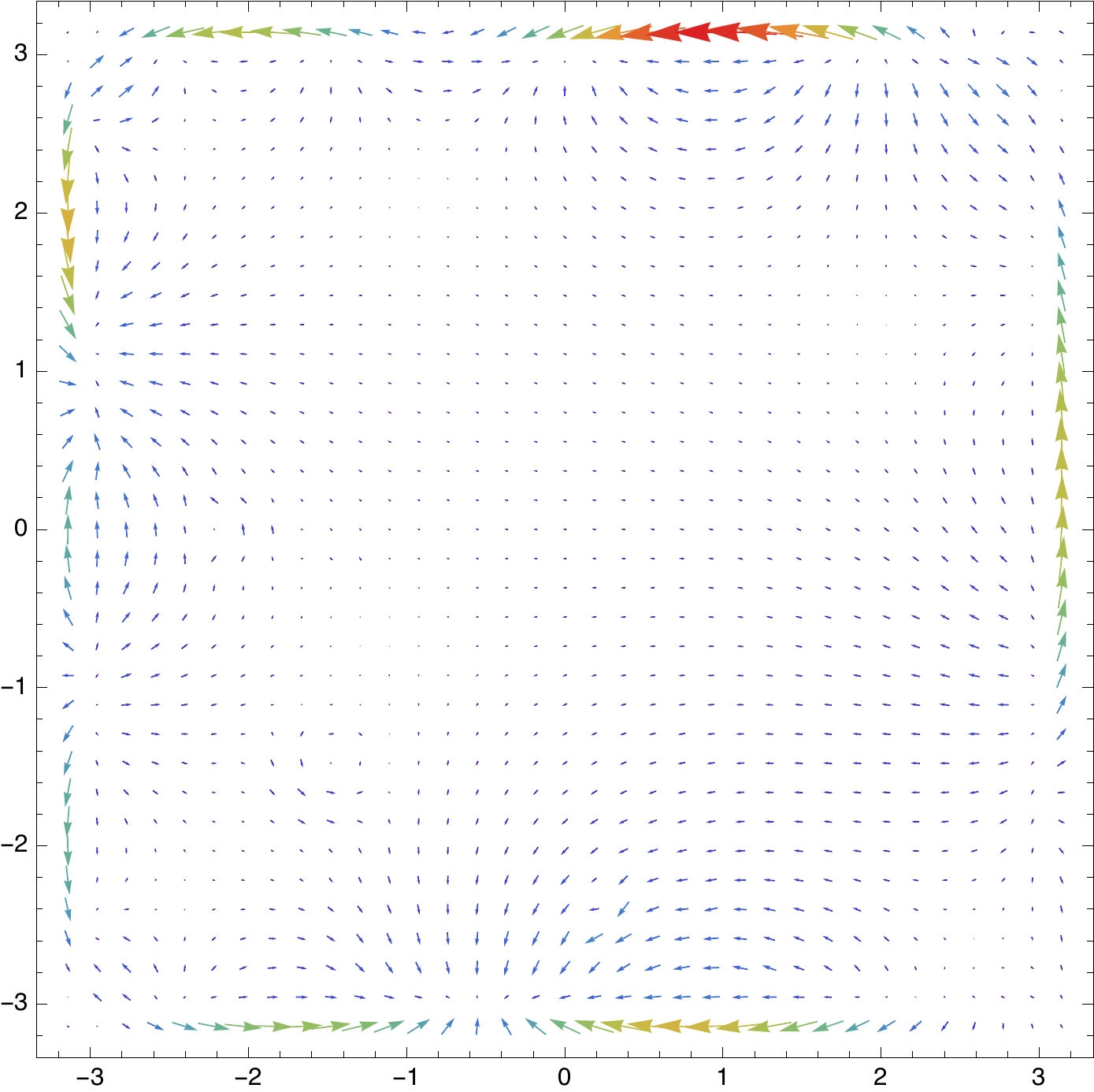}
  \caption{Level-by-level reconstruction of a divergence free fluid vector field (top left) with free slip boundary conditions with only isotropic basis functions from $j=-1$ (second column from left), i.e using only scaling functions, to $j=3$ (right). The second row shows the error in the reconstructed vector field. For $j=3$ the vector field is well represented except close to the boundaries where one still has a relatively large error since, firstly, the number of levels is too small to fully resolve the signal and, secondly, the apron region around the flow domain is relatively small.
  With anisotropic functions the error can be substantially reduced, see Fig.~\ref{fig:sig:boundary}.}
  \label{fig:rec_err_sig}
\end{figure}

\subsection{Fast Transform}
\label{sec:div_free:fast_transform}

The fast wavelet transform provides an efficient means to obtain the wavelet coefficients of a signal, in our case a fluid velocity vector field, without the need to use numerical quadrature.
It proceeds by splitting the signal $f_{j+1}$ at scale $j+1$ into a low frequency part $f_j$ on level $j$, represented by scaling function coefficients, and a high frequency part, represented by wavelet coefficients.
By linearity it suffices to determine the projection for the basis functions, which are described by the filter taps
\begin{subequations}
\begin{align}
  \alpha_{j,k} &= \left\langle \vec{\phi}_{j,0}(x) , \vec{\phi}_{j+1,k}(x) \right\rangle
  \\[5pt]
  \beta_{j,k,t} &= \left\langle \vec{\psi}_{j,0,t}(x) , \vec{\phi}_{j+1,k}(x) \right\rangle
  \\[5pt]
  \gamma_{j,k,t,t'} &= \left\langle \vec{\psi}_{j,0,t}(x) , \vec{\psi}_{j+1,k,t'}(x) \right\rangle
  \\[5pt]
  \delta_{j,k,t} &= \left\langle \vec{\psi}_{j,0,t}(x) , \vec{\phi}_{j,k}(x) \right\rangle
\end{align}
\end{subequations}
with the $\gamma_{j,k,t,t'}$ only necessary for analysis, i.e. projecting a signal into the wavelet basis, and $\delta_{j,k,t}$ only for the reconstruction, see Appendix~\ref{sec:appendix:fast_transform} for details.
Our construction enables the computation of the filter taps in closed form, avoiding expensive numerical quadrature and ensuring that they can be obtained to any required accuracy.

Without loss of generality, we will consider the calculation for the $\beta_{j,k}$ in two dimensions.
Using Parseval's theorem we obtain
\begin{subequations}
\begin{align}
  \beta_{j,k} = \left\langle \hat{\vec{\psi}}_{j,0}(\xi)  \, ,  \, \hat{\vec{\phi}}_{j,k+1}(\xi) \right\rangle
\end{align}
and since both $\hat{\vec{\psi}}_{j,0}(\xi)$ and $\hat{\vec{\phi}}_{j,k+1}(\xi)$ have the same vectorial part, namely $\vec{e}_{\theta}$, this reduces to
\begin{align}
  \beta_{j,k} = \left\langle \hat{\psi}_{j,0}(\xi)  \, ,  \, \hat{\phi}_{j,k+1}(\xi) \right\rangle .
\end{align}
where the inner product is now those for scalar functions and the filter taps computation is the same as for scalar polar wavelets~\cite{Lessig2018a}.
In particular, expanding $\hat{\psi}_{j,0}(\xi)$ and $\hat{\phi}_{j,k+1}(\xi)$ using their definitions one obtains
\begin{align}
  \label{eq:fast_transform:beta}
  \beta_{j,k}
  &= 2\pi \sum_{n} i^n \, \beta_n \, \int_{\R_{\vert \xi \vert}^+} \hat{h}(2^{-j} \vert \xi \vert) \, \hat{g}(2^{-j+1} \vert \xi \vert) \, J_m(\vert \xi \vert \, \vert k \vert) \, \vert \xi \vert \, d\vert \xi \vert
  \\[5pt]
  &= 2\pi \sum_{n} i^n \, \beta_n \, B_{j}(k)
\end{align}
\end{subequations}
with $B_{j}(k)$ having a closed form expression and being scale invariant, see the reference implementation.
Hence, the $\beta_{j,k}$ can be evaluated using a finite computation and to any required accuracy.
An analogous calculation applies to the $\alpha_{j,k}$, $\gamma_{j,k,t,t'}$ and $\delta_{j,k,t}$.
In three dimensions, the tightness of the Hedgehog frame allows a similar reduction of the vector-valued case to the computation of filter taps for scalar polar wavelets.

Since our wavelets do not have compact support in space, the fast transform requires formally infinite sums.
They become finite in the finite precision available on practical computers but the filters still have a very large number of taps.
One thus typically truncates the filters, which, however, also results in reduced accuracy.
A computation of the frame coefficients in the frequency domain based on the FFT can hence be more accurate and efficient than the fast transform.
See again Appendix~\ref{sec:appendix:fast_transform} for a more detailed discussion.

\section{Experiments}
\label{sec:experiments}

\begin{figure}
  \begin{center}
  \includegraphics[width=0.65\columnwidth]{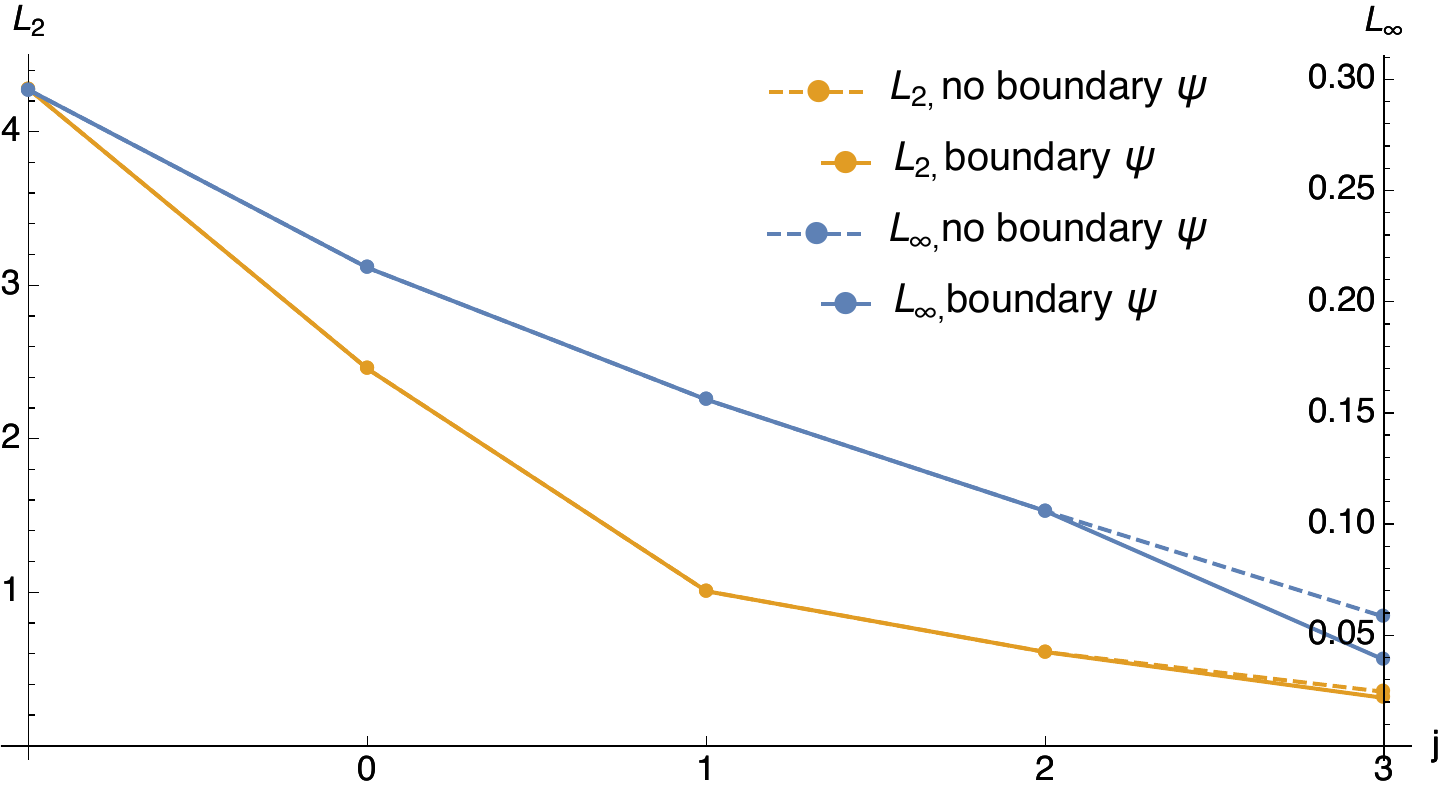}
  \end{center}
  \caption{$L_{2}$ (yellow) and $L_{\infty}$ error rates for the level-by-level reconstruction shown in Fig.~\ref{fig:rec_err_sig}.}
  \label{fig:rec_err_sig:errrates}
\end{figure}

\subsection{Implementation of wavelets}
\label{sec:experiments:implementation}

We developed a reference implementation of our divergence free polar wavelets in Mathematica.\footnote{\url{http://graphics.cs.uni-magdeburg.de/projects/psidiv/mathematica.zip}}
For the radial window function we used the Portilla-Simoncelli window~\cite{Portilla2000}, which yields closed form expressions for the wavelets in the spatial domain~\cite{Lessig2018a}.
In two dimensions, the angular window functions $\hat{\gamma}_j(\theta)$ are an extension of those used in~\cite{Lessig2018a} which are based on the wavelets for the circle developed by Walter and Cai~\cite{Walter1999}, see Fig.~\ref{fig:beta_m}.
Since for the two different types of directional wavelets only even and odd Fourier series coefficients are nonzero, respectively, they can be combined without interference and it is straight forward to construct a tight frame containing both types of boundary functions.
Note that our window functions $\hat{\gamma}_j(\theta)$ are described by finite Fourier series and hence do not satisfy the assumptions from Proposition~\ref{prop:curvelet_approx} where the angular window is compactly supported; however, they can be understood as a bandlimited approximation and we chose them such that the wavelets approximately satisfy the parabolic scaling law that is the critical ingredient for optimal approximation rates.
In three dimensions, the angular windows are those from~\cite{McEwen2016} for wavelets on the sphere and the orientations, which are non-trivial because on $S^2$ one has no equivalent of a uniform grid, are a custom construction ensuring that the discrete spherical wavelets form a tight frame and satisfy the requirements in Proposition~\ref{prop:tight_frame:3d}~\cite{Lessig2018a}.

\begin{figure}
  \includegraphics[width=0.32\columnwidth]{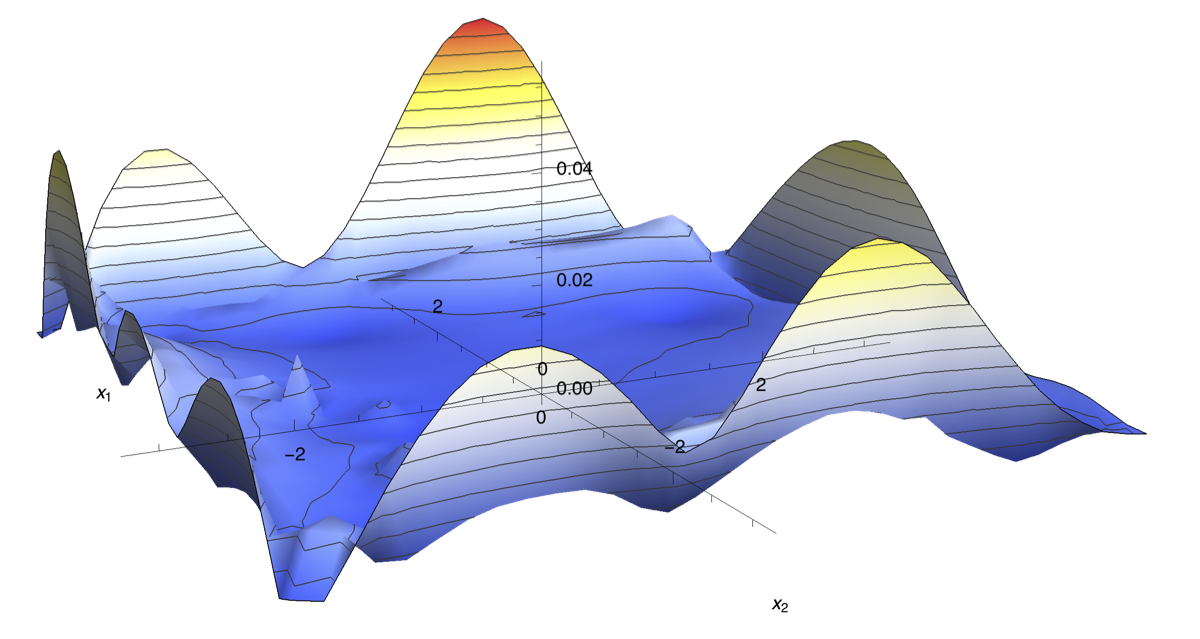}
  \includegraphics[width=0.32\columnwidth]{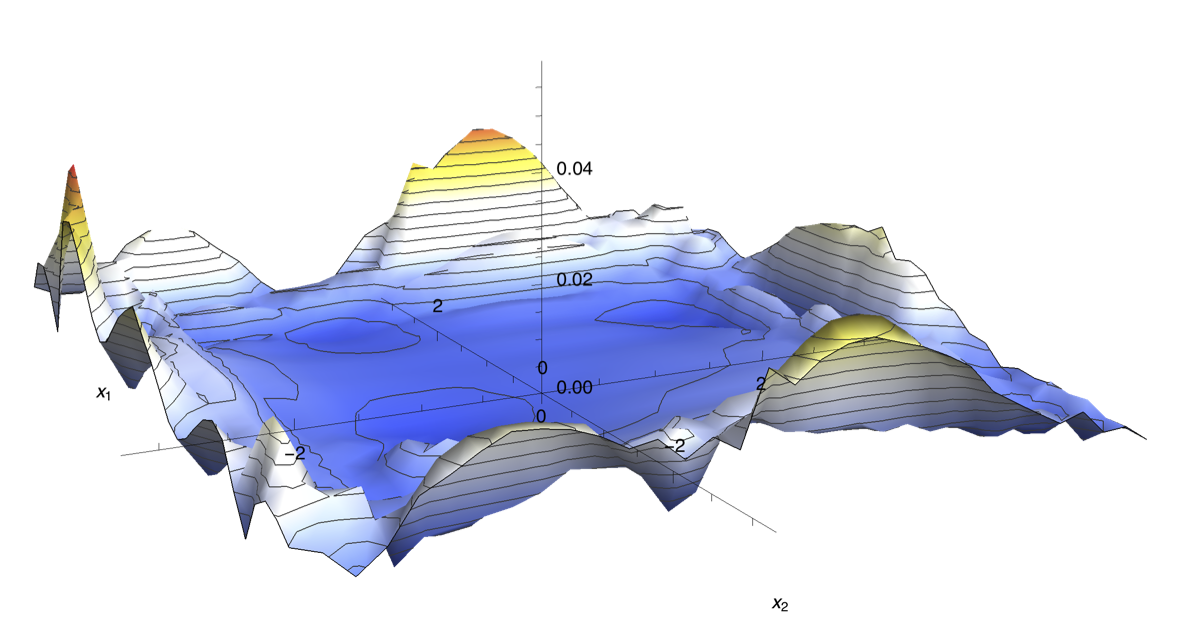}
  \includegraphics[width=0.32\columnwidth]{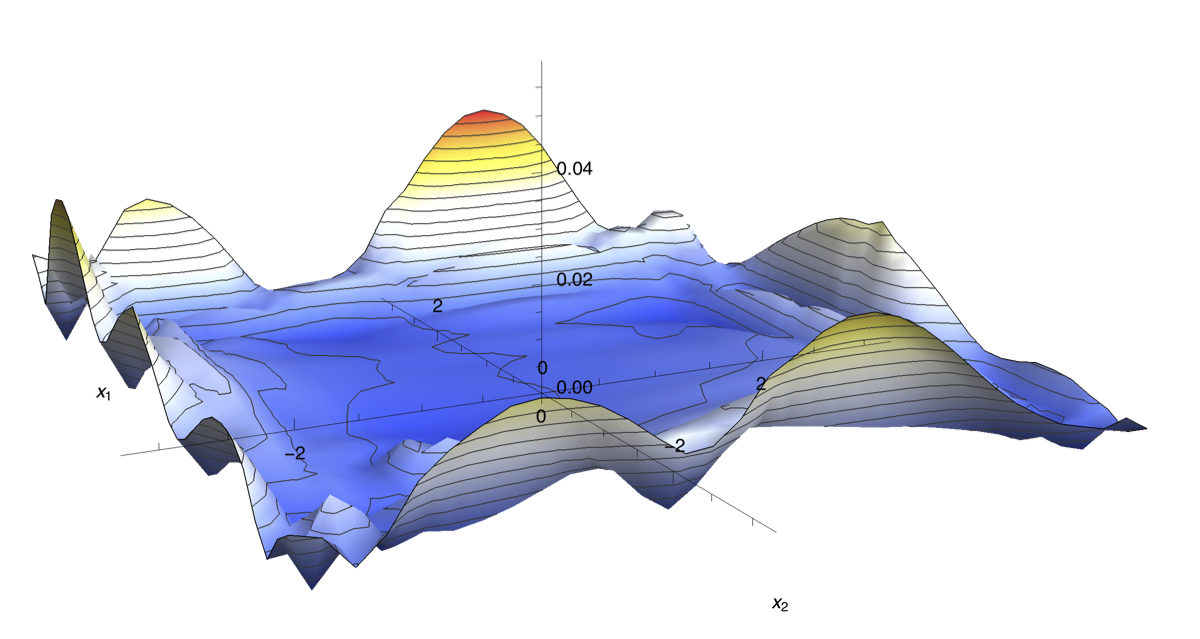}
  \caption{Reconstruction error for the fluid velocity vector field in Fig.~\ref{fig:rec_err_sig} with different functions on the finest level $j=3$: only isotropic functions (left), with all directional functions (middle), with only the directional functions aligned with the boundary (right). The middle and right plot differ only slightly. One can hence predict the sparsity, i.e. the non-negligible frame coefficients, when the boundary orientation is known and does not have to compute all of them.}
  \label{fig:sig:boundary}
\end{figure}

\subsection{Analysis of fluid velocity vector fields}
\label{sec:experiments:analysis}

In our experiments we considered two fluid velocity vector fields, one that is divergence free in the analytic sense, see Fig.~\ref{fig:rec_err_sig}, and a classical flow around a ball obtained with an FEM simulation that is approximately divergence free, see Fig.~\ref{fig:ball}.

The ideal divergence free flow was obtained as a linear combination of divergence free eigenfunctions of the vector-Laplacian for the square.
In the experiment we first studied the approximation power with only isotropic functions.
Fig.~\ref{fig:rec_err_sig} shows that with four levels the velocity vector field is already well resolved except around the boundaries of the domain.
There the vector field is discontinuous (when considered in $\R^2$) and the small number of levels is not sufficient to fully resolve the signal.
Additionally, the apron region around the flow domain is relatively small.
We also studied the approximation with anisotropic functions on the finest level.
The $L_2$ and $L_{\infty}$ error rates for the experiment as a function of the level and with and without directional wavelets on the finest level are provided in Fig.~\ref{fig:rec_err_sig:errrates}.
Fig.~\ref{fig:sig:boundary} demonstrates that, thanks to the directional selectivity, at the boundary the wavelets aligned with the discontinuity are sufficient to reconstruct the signal.
Hence, when the boundary of a flow domain is known one can exploit it to compute immediately the sparse signal representation.
We want to explore this and its application to fluid simulation in more detail in future work.

Fig.~\ref{fig:rec_err_sig:errrates} and Fig.~\ref{fig:sig:boundary} show that the oriented wavelets themselves only yield a modest error reduction.
However, their efficiency in the sense of Proposition~\ref{prop:curvelet_approx}, i.e. the sparsity of the signal representation, also comes from a less dense sampling along the elongated direction of the wavelet and not directly from lower error rates.
In the original work on scalar curvelets, the anisotropic wavelet spacing relied on angular windows $\hat{\gamma}_j(\theta_{\xi})$ with compact support.
Numerical experiments indicate that such a spacing can be used to good approximation also for the bandlimited, i.e. non-compactly supported, angular windows employed in our experiments.
Will study this in more detail in future work.

\begin{figure}
  \includegraphics[trim={1740pt 280pt 0pt 410pt},clip,width=\columnwidth]{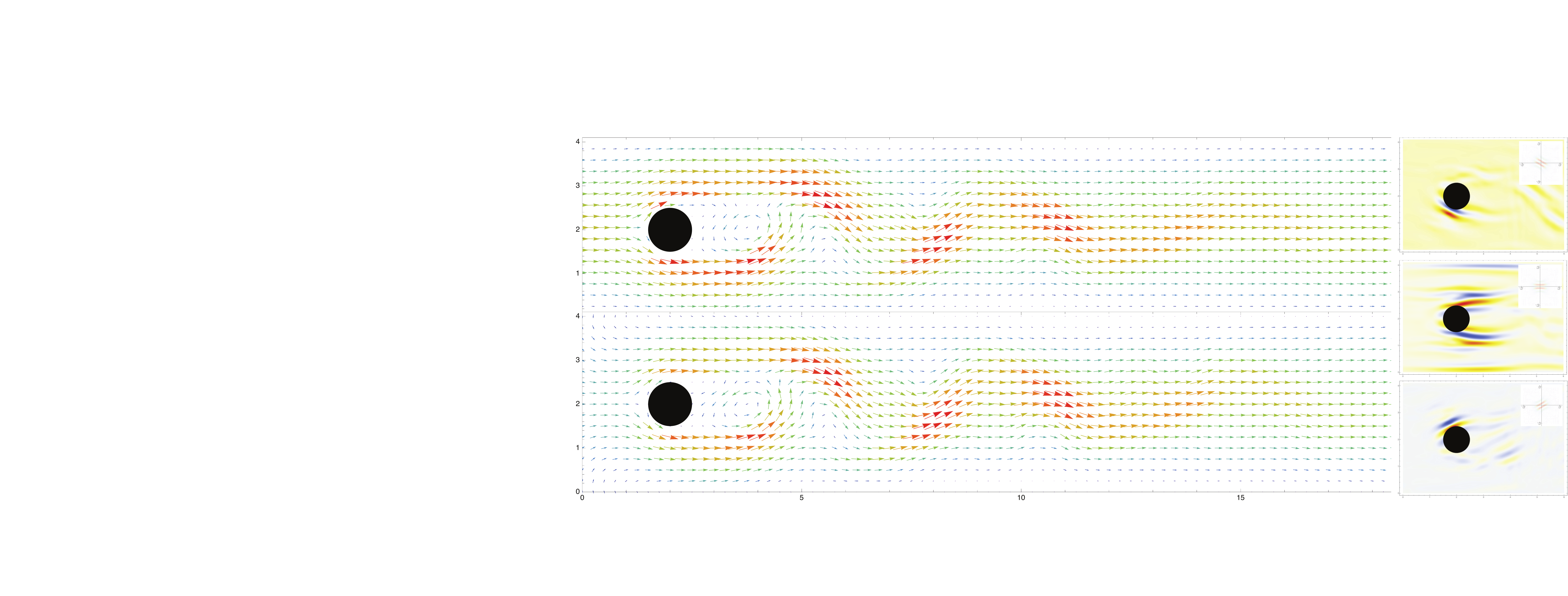}
  \caption{The classical fluid flow around a ball with a von K{\'a}rm{\'a}n vortex street. Data set from an FEM simulation (top) and wavelet reconstruction with three levels (bottom). Slight differences, e.g. on the left, are a consequence of the input data only being approximately divergence free. The right column depicts basis function coefficients on the finest level for the orientations in the inset in the vicinity of the ball. The plots validate that our directional wavelets indeed provide good angular selectivity and yield sparse representations.}
  \label{fig:ball}
\end{figure}

For the flow around the ball we studied the sparsity and the potential of the wavelets to detect specific flow features.
The coefficient plots in Fig.~\ref{fig:ball} demonstrate that our directional wavelets provide good angular selectivity and yield a correspondingly high sparsity.
They also demonstrate that directional wavelets are not only useful for modeling boundaries but also for other highly anisotropic features such as a von K{\'a}rm{\'a}n vortex street.

\section{Conclusion}
\label{sec:conclusion}

We introduced a Parseval tight frame of divergence free wavelets that, next to other desirable properties, provides well defined angular localization.
We also showed that the wavelets have analytic expressions in frequency and space and that the filter taps for the fast transform can be computed in closed form and to arbitrary precision.
The beneficial properties of the wavelets result from a construction that intrinsically ensures that they are tangential to the frequency sphere, and hence divergence free in space, and from using windows functions defined in spherical coordinates.
Our windows are bandlimited in the frequency domain, so that the wavelets do not have compact support in space.
This complicates working with the wavelets in the spatial domain and, for example, the filter taps for the fast transform are typically truncated.
Nonetheless, our construction yields a practical implementation of divergence-free, directional wavelets and we validated this with numerical experiments.

In future work, we would like to perform more extensive numerical experiments and investigate more applications of our construction.
We are currently working on a local spectral fluid simulator that retains the advantages of spectral methods but allows for spatial adaptivity.
In this work we also exploit the closed form expression for $\nabla \times \vec{\psi}_s$, i.e. that vorticity can be computed analytically.
We would also like to investigate in more detail the two different types of boundary functions that we constructed in two dimensions and to better understand to which flow feature these correspond.
Another interesting direction for future work is to extend Proposition~\ref{prop:curvelet_approx} to the bandlimited window functions $\hat{\gamma}_j(\theta_{\xi})$ used in our experiments.
In three dimensions, the boundary functions also need to be investigated in more detail.

\section*{Acknowledgements}

The author thanks Eugene Fiume for continuing support.
Michal Jarzabek and Philipp Herholz helped with implementing the construction.

\bibliographystyle{plain}
\bibliography{fluids}

\appendix

\section{Conventions}
\label{sec:conventions}

\subsection{Polar and Spherical Coordinates}
\label{sec:preliminaries:coordinates}

We will work primarily in spherical coordinates.
The angular variable will be denoted as $\bar{x} = x / {\vert x \vert}$ and, as usual, it is undefined when the radial variable $\vert x \vert$ is zero.
In $\R^2$, we will use the standard notation for polar coordinates with $\theta$ being the angle between the positive $x_1$-axis and the described point.
In the frequency domain, polar and spherical coordinates are defined analogously and when ambiguity might arise we will indicate the domain with a subscript, writing for example $\theta_{\xi}$ for the polar angle in frequency space.

When we integrate in spherical coordinates then the angular domain of integration is $S^{n-1}$, the sphere in $\R^n$, and the radial one is $\R_{\vert x \vert}^+$, i.e. the non-negative real numbers.

\subsection{The Fourier transform}
\label{sec:preliminaries:fourier}

The unitary Fourier transform of a function $f : \R^n \to \mathbb{C}$ in $L_1(\R^n) \cap L_2(\R^n)$ is defined as
\begin{subequations}
\label{eq:fourier_transform}
\begin{align}
  \label{eq:fourier_transform:forward}
  \mathcal{F}(f)(\xi) = \hat{f}(\xi) = \frac{1}{(2\pi)^{n/2}} \int_{\R_x^n} f(x) \, e^{-i \langle x , \xi \rangle} \, d x
\end{align}
with inverse transform
\begin{align}
  \label{eq:fourier_transform:inverse}
  \mathcal{F}^{-1}(f)(\xi) = f(x) = \frac{1}{(2\pi)^{n/2}} \int_{\R_{\xi}^n} \hat{f}(\xi) \, e^{i \langle \xi , x \rangle} \, d\xi .
\end{align}
\end{subequations}

\subsection{Spherical harmonics}
\label{sec:preliminaries:sh}

The analogue of the Fourier transform in Eq.~\ref{eq:fourier_transform} on the sphere is the spherical harmonics expansion.
For any $f \in L_2(S^2)$ it is given by
\begin{subequations}
  \label{eq:spherical_harmonics:expansion}
\begin{align}
  f(\omega)
  &= \sum_{l=0}^{\infty} \sum_{m=-l}^l \langle f(\eta) , y_{lm}(\eta) \rangle \, y_{lm}(\omega)
  \\
  &= \sum_{l=0}^{\infty} \sum_{m=-l}^l f_{lm} \, y_{lm}(\omega)
\end{align}
\end{subequations}
where $\langle \cdot , \cdot \rangle$ denotes the $L_2$ inner product on $S^2$.
We use standard (geographic) spherical coordinates with $\theta \in [0,\pi]$ being the polar angle and $\phi \in [0,2\pi]$ the azimuthal one.
The spherical harmonics basis functions in Eq.~\ref{eq:spherical_harmonics:expansion} are given by
\begin{align}
  \label{eq:spherical_harmonics}
  y_{lm}(\omega) = y_{lm}(\theta,\phi) = C_{lm} \, P_l^m( \cos{\theta} ) \, e^{i  m  \phi}
\end{align}
where the $P_l^m(\cdot)$ are associated Legendre polynomials and $C_{lm}$ is a normalization constant so that the $y_{lm}(\omega)$ are orthonormal over the sphere.

\paragraph{Clebsch-Gordon coefficients}

In contrast to the Fourier series, where the product of two basis functions $e^{i m_1 \theta}$ and $e^{i m_2 \theta}$ is immediately given by one other Fourier series function, namely $e^{i (m_1 + m_2) \theta}$, for spherical harmonics the product is not diagonal but characterized by Clebsch-Gordon coefficients $C_{l_1,m_1; l_2,m_2}^{l,m}$.
In particular, the projection of the product of $y_{l_1,m_1}(\omega)$ and $y_{l_2,m_2}(\omega)$ onto the spherical harmonics $y_{lm}(\omega)$ is given by
\begin{subequations}
  \label{eq:sh:product}
\begin{align}
\int_{S^2} y_{l_1 m_1}(\omega) \, y_{l_2 m_2}(\omega) \, y_{l m}^*(\omega) \, d\omega =
  \sqrt{\frac{(2l_1 + 1) (2l_2 + 1)}{4\pi (2l+1)}} C_{l_1,0; l_2,0}^{l,0} \, C_{l_1,m_1; l_2,m_2}^{l,m}
  \nonumber
\end{align}
and we call
\begin{align}
  G_{l_1,m_1; l_2,m_2}^{l,m}
  &\equiv \sqrt{\frac{(2l_1 + 1) (2l_2 + 1)}{4\pi (2l+1)}} \, C_{l_1,0; l_2,0}^{l,0} \, C_{l_1,m_1; l_2,m_2}^{l,m}
\end{align}
\end{subequations}
the spherical harmonics product coefficient.
The Clebsch-Gordon coefficients are sparse and non-zero only when $m = m_1 + m_2$,
and $l_1 + l_2 - l \geq 0$, $l_1 - l_2 + l \geq 0$, $-l_1 + l_2 + l \geq 0$.

\subsection{Fourier Transform in Polar and Spherical Coordinates}

To compute the Fourier transform in polar and spherical coordinates one requires and expression for the complex exponential in these coordinate systems.
These are given by the Jacobi-Anger and Rayleigh formulas.

\paragraph{Jacobi-Anger formula}
In the plane, the complex exponential can be expanded as
\begin{align}
  \label{eq:jacobi_anger}
  e^{i \langle \xi , x \rangle} = \sum_{m \in \Z} i^m \, e^{i m (\theta_x - \theta_{\xi})} J_m( \vert \xi \vert \, \vert x \vert ) ,
\end{align}
which is known as Jacobi-Anger formula~\cite{Watson1922}.
In Eq.~\ref{eq:jacobi_anger}, $J_m( z )$ is the Bessel function of the first kind and $(\theta_x,\vert x \vert)$ and $(\theta_{\xi},\vert \xi \vert)$ are polar coordinates for the spatial and frequency domains, respectively.

\paragraph{Rayleigh formula}
The analogue of the Jacobi-Anger formula in three dimensions is the Rayleigh formula,
\begin{subequations}
  \label{eq:rayleigh_formula}
\begin{align}
  e^{i \langle \xi , x \rangle}
  &= \sum_{l=0}^{\infty} i^l \, (2l+1) \, P_l\big(\bar{\xi} \cdot \bar{x}\big) \, j_l( \vert \xi \vert \, \vert x \vert )
  \\[5pt]
  &= 4 \pi \sum_{l=0}^{\infty} \sum_{m=-l}^l i^l \, y_{lm}\big(\bar{\xi}\big) \, y_{lm}^*\big(\bar{x}\big) \, j_l( \vert \xi \vert \, \vert x \vert )
\end{align}
\end{subequations}
where $j_l(\cdot)$ is the spherical Bessel function and the second line follows by the spherical harmonics addition theorem.

\section{Spatial Representation of Wavelets}
\label{sec:appendix:spatial}

In this appendix we detail the computation of the inverse Fourier transform of our wavelets yielding their spatial representations.

\subsection{Spatial Representation of Wavelets in Two Dimensions}
\label{sec:appendix:spatial:2d}

In two dimensions the spatial wavelet is given by the inverse Fourier transform
\begin{align}
  \label{eq:psi:divfree:dir}
  \vec{\psi}(x) = \frac{1}{2\pi} \int_{\R_{\xi}^2} \Big( -i \, \sum_{n} \beta_n \, e^{i n \theta_{\xi}}  \hat{h}( \vert \xi \vert) \, \vec{e}_{\theta_{\xi}} \Big) \, e^{i \langle \xi , x \rangle} \, d\xi
\end{align}
Writing $\vec{e}_{\theta_{\xi}}$ in Cartesian coordinates we obtain
\begin{align}
  \label{eq:psi:divfree:dir:2}
  \hat{\vec{\psi}}(\xi) = -\frac{i}{2\pi} \int_{\R_{\xi}^2} \sum_{n} \beta_n \, e^{i n \theta_{\xi}} \, \hat{h}( \vert \xi \vert) \, \begin{pmatrix}
    \sin{\theta_{\xi}}
    \\[-3pt]
    -\cos{\theta_{\xi}}
  \end{pmatrix}  \, e^{i \langle \xi , x \rangle} \, d\xi .
\end{align}
The two components of $\vec{\psi}(x)$ can thus be obtained by computing the inverse Fourier transform independently for each of them.
Writing the trigonometric functions in the Cartesian representation of $\vec{e}_{\theta_{\xi}}$ as Fourier series we obtain for the first components of $\hat{\vec{\psi}}(\xi)$
\begin{align}
  \label{eq:fhat1:general}
  \hat{\psi}_1(\xi) &= \frac{1}{2} \left( e^{-i \theta_{\xi}} - e^{i \theta_{\xi}} \right) \left( \sum_{n} \beta_n \, e^{i n \theta_{\xi}} \right) \, \hat{h}(\vert \xi \vert)
  \\
  &= \frac{1}{2} \left( \sum_{n} \beta_n \, e^{i (n-1) \theta_{\xi}} \, \hat{h}(\vert \xi \vert) - \sum_{n} \beta_n \, e^{i (n+1) \theta_{\xi}} \, \hat{h}(\vert \xi \vert)  \right) \nonumber .
\end{align}
Since $\hat{\psi}_1(\xi)$ is described in polar coordinates its inverse Fourier transform is most conveniently computed using these coordinates.
Writing the complex exponential using the Jacobi-Anger formula in Eq.~\ref{eq:jacobi_anger} we obtain
\begin{align}
  \psi_1(x) =
   -\frac{i}{4\pi} \int_{\R_{\vert \xi \vert}^+} \int_{S_{\xi}^2}
    & \left( \sum_{\sigma \in \{ -1, 1\} } \sum_{n} \beta_n \, e^{i (n + \sigma) \theta_{\xi}} \, \hat{h}(\vert \xi \vert) \right)
    \\[4pt]
    & \times \sum_{m \in \Z} i^m \, e^{i m (\theta_x - \theta_{\xi})} J_m( \vert \xi \vert \, \vert x \vert )
  \, \vert \xi \vert \, d\theta_{\xi} \, d\vert \xi \vert .
  \nonumber
\end{align}
Using linearity this equals
\begin{align}
  \psi_1(x) =
   -\frac{i}{4\pi}
   \sum_{\sigma \in \{ -1, 1\} } \sum_{n}
   \sum_{m \in \Z} & i^m \, \beta_n \, e^{i m \theta_x}
   \int_{S_{\xi}^2} \, e^{i (n + \sigma) \theta_{\xi}}  \, e^{-i m \theta_{\xi}} \, d\theta_{\xi}
    \\[4pt]
    & \times  \int_{\R_{\vert \xi \vert}^+} \, \hat{h}(\vert \xi \vert) \,  J_m( \vert \xi \vert \, \vert x \vert )
  \, \vert \xi \vert \, \, d\vert \xi \vert .
  \nonumber
\end{align}
By the orthogonality of the Fourier series the integral over $\theta_{\xi}$ equals $2\pi \delta_{n+\sigma,m}$.
Denoting the radial integral as $h_m(\vert x \vert)$ we obtain for the spatial representation of the first component
\begin{subequations}
  \label{eq:div_free:space}
\begin{align}
  \psi_1(x) &= \frac{1}{2} \sum_{\sigma \in \{ -1, 1\} } \sum_{m} \sigma \, i^{m + \sigma} \beta_m \, e^{i (m + \sigma) \theta_x} \, h_{m + \sigma}(\vert x \vert)
\end{align}
An analogous calculation for the second component yields
\begin{align}
  \psi_2(x) &= \frac{i}{2} \sum_{\sigma \in \{ -1, 1\} } \sum_{m} i^{m + \sigma} \beta_m \, e^{i (m + \sigma) \theta_x} \, h_{m + \sigma}(\vert x \vert) .
\end{align}
\end{subequations}
Taken together this yields Eq.~\ref{eq:psi:divfree:2d:space} in Sec.~\ref{sec:construction:2d}.

\subsection{Spatial Representation of Wavelets in Three Dimensions}
\label{sec:appendix:spatial:3d}

In three dimensions, the spatial representation of our divergence free wavelets is given by
\begin{subequations}
\begin{align}
  \vec{\psi}^a(x) = \frac{1}{(2\pi)^{3/2}} \int_{\R_{\xi}^3} \left( -i \, \sum_{lm} \kappa_{lm} \, y_{lm}(\bar{\xi}) \, \hat{h}\big( 2^{j_r} \vert \xi \vert\big) \, \vec{\tau}_a(\bar{\xi}) \right) e^{i \langle \xi , x \rangle} \, d\xi .
\end{align}
To compute it, it will be convenient to write $\vec{\tau}_a(\bar{\xi})$ in its spherical harmonics representation with respect to Cartesian coordinates.
For $\vec{\tau}_3(\bar{\xi})$ it is given by
\begin{align}
  \vec{\tau}_3(\bar{\xi}) =
  \left( \! \!
  \begin{array}{c}
    - \sin{\theta} \, \sin{\varphi}
    \\
    \quad \sin{\theta} \, \cos{\varphi}
    \\
    0
  \end{array}
  \! \! \right)
  =
  \sqrt{\frac{2 \pi}{3}} \left( \! \!
  \begin{array}{c}
    -i \, y_{1,-1}(\omega) - i \, y_{1,1}(\omega)
    \\
    y_{1,-1}(\omega) -  y_{1,1}(\omega)
    \\
    0
  \end{array}
  \! \! \right) .
\end{align}
and the expressions for $\vec{\tau}_1(\bar{\xi})$ and $\vec{\tau}_2(\bar{\xi})$ can be obtained using Wigner-D matrices.
The first component $\psi_{1}^1(x)$ of the inverse Fourier transform $\vec{\psi}^1(x)$ is thus
\begin{align}
  \psi_{1}^1(x)
  &= \frac{-1}{2\pi \sqrt{3}} \int_{\R_{\xi}^3} \sum_{lm} \kappa_{lm} \, y_{lm}(\bar{\xi}) \, \big( y_{1,-1}(\bar{\xi}) + y_{1,1}(\bar{\xi}) \big) \, \hat{h}\big( 2^{j_r} \vert \xi \vert\big) \, e^{i \langle \xi , x \rangle} \, d\xi .
  \nonumber
\end{align}
Changing to spherical coordinates and using the Rayleigh formula in Eq.~\ref{eq:rayleigh_formula} then yields
\begin{align}
  &\psi_{r,1}^1(x) = \frac{-2}{\sqrt{3}} \sum_{l,m} \kappa_{lm}^{j,t}
  \sum_{l_2,m_2} i^{l_2} \, y_{l_2 m_2}(\bar{x})
  \\[5pt]
  & \  \ \times \sum_{\sigma \in \{-1,1\}}
  \int_{S_{\bar{\xi}}^2} y_{lm}(\bar{\xi}) \, y_{1\sigma}(\bar{\xi}) \, y_{l_2 m_2}(\bar{\xi}) \, d\bar{\xi}
  \int_{\R_{\vert \xi \vert}^+}  \hat{h}\big( 2^{j_r} \vert \xi \vert\big) \, j_{l_2}( \vert \xi \vert \, \vert x \vert) \, \vert \xi \vert^2 \, d\vert \xi \vert .
  \nonumber
\end{align}
The angular integral is the projection of the product of two spherical harmonics into a third one, which is given by the spherical harmonics product coefficient introduced in Eq.~\ref{eq:sh:product}.
For the inverse Fourier transform of the first component we hence have
\begin{align}
  \psi_{r,1}^1(x) &= \frac{-2}{\sqrt{3}} \sum_{\substack{l,m \\ \sigma \in \pm 1}} \kappa_{lm}^{i,t}
  \sum_{l_2=l-1}^{l+1}
  i^{l_2} \, G_{l m;1 \sigma}^{l_2,m+\sigma} \, y_{l_2 m+\sigma}(\bar{x}) \, h_{l_2}( \vert x \vert) .
\end{align}
An analogous calculation for the second component yields
\begin{align}
  \psi_{r,2}^1(x) &= \frac{2 i}{\sqrt{3}} \sum_{\substack{l,m \\ \sigma \in \pm 1}} \kappa_{lm}^{i,t}
  \sum_{l_2=l-1}^{l+1}
  \sigma \, i^{l_2} \, G_{l m;1 \sigma}^{l_2,m+\sigma} \, y_{l_2 m+\sigma}(\bar{x}) \, h_{l_2}( \vert x \vert) .
\end{align}
\end{subequations}
Taken together, we have Eq.~\ref{eq:wavelets:3d:general} in Sec.~\ref{sec:construction:3d}.

\section{Fast Wavelet Transform}
\label{sec:appendix:fast_transform}

The fast wavelet transform determines the wavelet coefficients of a given signal from the scaling function coefficients $f_{j',k} = \langle f(x) , \psi_{j',k}(x) \rangle$ on some finest level $j'$ (often approximated by pointwise values, i.e. $f_{j',k} \approx f(2^{-j'} k)$, see~\cite{Abry1994,Zhang1996} and~\cite[p. 301]{Mallat2009} on this point).
The transform proceeds level-by-level, on each of them computing the approximation of the signal on the next coarser scale $j-1$ and the wavelet coefficients that describe the difference between the levels~\cite[p. 298]{Mallat2009}, and hence the information that is lost by using the coarser one.
In the literature, the algorithm is typically described for orthogonal wavelets, e.g.~\cite[Sec. 7.3.1]{Mallat2009}, where the spaces $W_j$ spanned by the wavelet functions are orthogonal.
This does not hold for our wavelets and correspondingly some modifications compared to the classical scheme are necessary.
We hence describe the derivation in some detail.
For simplicity we will present it in $\R^2$. It holds with obvious modifications in $\R^3$.

Our input signal is a divergence free vector field $\vec{u}(x) \in L_2^{\mathrm{div}}(\R^{2,2})$ and we assume we know the corresponding scaling function coefficients $u_{j'k}^s$ on some finest level $j'$.
For the first level, the transform is then given by
\begin{subequations}
  \label{eq:fwt:forward}
\begin{align}
  u_{j'-1,k}^s
  &= \left\langle \sum_{k' \in \Z^2} u_{j'k'}^s \, \vec{\phi}_{j',k'}(x) \, , \, \vec{\phi}_{j'-1,k}(x) \right\rangle
  \\[4pt]
   u_{j'-1,k,t}^w
  &= \left\langle \sum_{k' \in \Z^2} u_{j'k'}^s \, \vec{\phi}_{j',k'}(x) \, , \, \vec{\psi}_{j'-1,k,t}(x) \right\rangle .
\end{align}
Using linearity we obtain
\begin{align}
  \label{eq:fwt:alpha}
  u_{j'-1,k}^s &= \sum_{k' \in \Z^2} u_{j'k'}^s \underbrace{\left\langle \vec{\phi}_{j,k'}(x) \, , \, \vec{\phi}_{j-1,k}(x) \right\rangle}_{\displaystyle \alpha_{j,k'}}
  \\[4pt]
  \label{eq:fwt:beta}
  u_{j'-1,k,t}^w &= \sum_{k' \in \Z^2} u_{j'k'}^s \underbrace{\left\langle \vec{\phi}_{j,k'}(x) \, , \, \vec{\psi}_{j-1,k,t}(x) \right\rangle}_{\displaystyle \beta_{j,k',t}}
\end{align}
where $\alpha_{j,k'}$ and $\beta_{j,k',t}$ do not depend on $k$ by the discrete shift invariance of the basis functions.
Since the subspaces are not orthogonal (in contrast to the situation for orthogonal wavelets) we have on all finer levels
\begin{align}
  \label{eq:fwt:gamma}
  u_{j-1,k,t}^w &= \sum_{k' \in \Z^2} u_{jk'}^s \underbrace{\left\langle \vec{\phi}_{j,k'}(x) \, , \, \vec{\psi}_{j-1,k,t}(x) \right\rangle}_{\displaystyle \beta_{j,k',t}}
  \\[0pt]
  & \ \ \ + \sum_{t'=1}^{M_{j}} \sum_{k' \in \Z^2} u_{jk't'}^w \underbrace{\left\langle \vec{\psi}_{j,k',t'}(x) \, , \, \vec{\psi}_{j-1,k,t}(x) \right\rangle}_{\displaystyle \gamma_{j,k',t,t'}}
  \nonumber
\end{align}
\end{subequations}
with the $\gamma_{j,k',t,t'}$ describing the coupling between the wavelet functions on adjacent levels.
Eqs.~\ref{eq:fwt:alpha},~\ref{eq:fwt:beta},~\ref{eq:fwt:gamma} provide a means to compute the wavelet coefficients $u_{j,k,t}^w$ on all coarser levels $j \leq j'$ through discrete convolutions.
These together with the scaling function coefficients $u_{0,k}^s$ on a coarsest level $j=0$ fully characterize the input signal.

Analogous to the analysis in Eq.~\ref{eq:fwt:forward}, also the reconstruction can be implemented in a pyramidal scheme.
The reconstruction proceeds by reconstructing the scaling function coefficients on level $j+1$ from the scaling function and wavelet coefficients on level $j$.
Since the wavelet coefficients on all levels $0 \leq j < j'$ have been stored, this can be iterated until one obtains the scaling function coefficients on level $j'$ which were the input to the forward transform.
To obtain the coefficients $u_{j+1,k}^s$ on level $j+1$ from the available information we have to compute
\begin{subequations}
\begin{align}
  u_{j+1,k}^s
  &= \left\langle \sum_{k' \in \Z^2} u_{jk'}^s \vec{\phi}_{jk'}(x) + \sum_{t'=1}^{M_j} \sum_{k' \in \Z^2} u_{jk't'}^w \, \vec{\psi}_{j,k',t'}(x)  \,  ,  \, \vec{\phi}_{j+1,k}(x) \right\rangle
  \\[4pt]
  & \quad \quad \quad \quad \ \ \ + \left\langle \sum_{t'=1}^{M_j+1} \sum_{k' \in Z^2} u_{j+1,k',t'}^w \,\vec{\psi}_{j+1,k',t'}(x) \, ,  \, \vec{\phi}_{j+1,k}(x) \right\rangle
  \nonumber
\end{align}
\end{subequations}
where the $u_{j+1,kt'}^w$ have to be considered since the wavelet spaces $W_j$ are not orthogonal.
Using linearity we obtain
\begin{subequations}
  \label{eq:fwt:inverse:final}
\begin{align}
  u_{j+1,k}^s
  &= \sum_{k' \in \Z^2} u_{jk}^s \underbrace{\left\langle \vec{\phi}_{jk}(x) , \vec{\phi}_{j+1,k}(x) \right\rangle}_{\displaystyle \alpha_{jk'}}
  \\
  & \quad + \sum_{t'=1}^{M_j} \sum_{k' \in \Z^2} u_{jkt'}^w \underbrace{\left\langle \vec{\psi}_{j,k',t'}(x) , \vec{\phi}_{j+1,k}(x) \right\rangle}_{\displaystyle \beta_{jk't}}
  \\
  & \quad + \sum_{t'=1}^{M_j+1} \sum_{k' \in \Z^2} u_{j+1,kt'}^w \underbrace{\left\langle \vec{\psi}_{j+1,k',t'}(x) , \vec{\phi}_{j+1,k}(x) \right\rangle}_{\displaystyle \delta_{jk't}}
\end{align}
\end{subequations}
where the $\alpha_{jk}$ and $\beta_{jkt}$ are the same coefficients as in the forward transform.
Eq.~\ref{eq:fwt:inverse:final} shows that also the inverse wavelet transform can be realized through discrete convolutions with filters $\alpha_{jk}$, $\beta_{jkt}$ and $\delta_{jkt}$.

As stated above, the forward and inverse transforms are realized using infinite sums, which is not numerically practical.
By the decay of the wavelets in the spatial domain, which implies an analogous decay of the filter taps, the sums become finite when implemented in the finite precision available on practical computers.
Furthermore, the filter taps can also be truncated when the values become sufficiently small to be negligible for the application at hand (similar to the $\epsilon$-thresholding used with wavelets in many applications).
For instance, for the signal in Fig.~\ref{fig:rec_err_sig} one obtains for filters with $301$ taps reconstruction errors of $L_{\infty} = 7.6 \times 10^{-6}$ and $L_2 = 3.5 \times 10^{-5}$ while with $81$ taps the errors are $L_{\infty} = 1.5 \times 10^{-5}$ and $L_2 = 2.7 \times 10^{-4}$.
With $300$ taps one has hence approximately single precision accuracy, which is, for example, sufficient for applications in computer graphics.
Note that a larger number might be required in 3D.

The computational complexity of the fast wavelet transform is $\mathcal{O}(n)$ where $n$ is the length of the input signal and the constant is controlled by the number of non-zero filter taps~\cite[Sec. 7.3.1]{Mallat2009}.
This number is very large for our wavelets, and much larger than for classical, Daubechies-style discrete wavelets.
In practice, an implementation that computes coefficients in the frequency domain using the fast Fourier transform might hence be faster than the fast transform outlined above.

One disadvantage of an FFT-based computation of the transform is that it is difficult to exploit sparsity.
With the fast wavelet transform, in contrast, sparsity is more easily used to reduce storage and computations by having the sums in Eq.~\ref{eq:fwt:forward} and Eq.~\ref{eq:fwt:inverse:final} only run over the nonzero (or thresholded nonzero) coefficients.
For example, the signal in Fig.~\ref{fig:rec_err_sig} can be reconstructed with $11\%$ $L_2$ error with the $20\%$ largest coefficients (we currently only use four levels, with more levels also better compression and error rates could be achieved).

In the scalar case, the scaling function coefficients that serve as input to the fast transform are typically approximated by pointwise values, i.e. $f_{j',k} \approx f(2^{-j'} k)$, see e.g.~\cite{Abry1994,Zhang1996} and~\cite[p. 301]{Mallat2009}.
This does not easily carry over to the vector-valued case and we hence leave a thorough investigation of this point to future work.

\end{document}